\newmdtheoremenv{testtheorem}{Theorem}
\DeclarePairedDelimiter{\inner}{\langle}{\rangle}
\newlist{casesp}{enumerate}{3} 
\setlist[casesp]{align=left, 
 listparindent=\parindent, 
 parsep=\parskip, 
 font=\normalfont\bfseries, 
 leftmargin=0pt, 
 labelwidth=0pt, 
 itemindent=.4em,labelsep=.4em, 
 partopsep=0pt, 
 }
\setlist[casesp,1]{label=Case~\arabic*:,ref=\arabic*}
\setlist[casesp,2]{label=Subcase~\thecasespi(\roman*):,ref=\thecasespi.\roman*}
\setlist[casesp,3]{label=Case~\thecasespii.\alph*:,ref=\thecasespii.\alph*}
\newtheorem{theorem}{Theorem}[section]
\newtheorem{lemma}[theorem]{Lemma}
\newtheorem{proposition}[theorem]{Proposition}
\newtheorem{definition}[theorem]{Definition}
\newtheorem{corollary}[theorem]{Corollary}
\theoremstyle{definition} 
\newtheorem{remark}[theorem]{Remark}
\newcommand{\QQ}{\mathbb{Q}}
\newcommand{\ZZ}{\mathbb{Z}}
\newcommand{\RR}{\mathbb{R}}
\newcommand{\NN}{\mathbb{N}}
\newcommand{\LO}{\mathrm{LO}}
\newcommand{\ooo}{\mathfrak{o}}
\newcommand{\LLL}{\mathbb{L}}
\newcommand{\wh}{\mathsf{Wh}}
\title{Order-Detection and Non-left-orderable Surgeries on Links}
\begin{document}

\emergencystretch 3em 

\author[A.~Clay]{Adam Clay}

\address{Department of Mathematics, 420 Machray Hall, University of Manitoba, Winnipeg, MB, R3T 2N2}
\email{Adam.Clay@umanitoba.ca}

\author[J.~Lu]{Junyu Lu}

\address{Department of Mathematics, 420 Machray Hall, University of Manitoba, Winnipeg, MB, R3T 2N2}
\email{luj9@myumanitoba.ca}

 \subjclass[2010]{57M05, 57M99, 06F15} 
 \keywords{Fundamental group, Dehn surgery, left-orderability, Whitehead link}
\thanks{Adam Clay
    was partially supported by NSERC grant RGPIN-2020-05343.}

\begin{abstract} Beginning with a $3$-manifold $M$ having a single torus boundary component, there are several computational techniques in the literature that use a presentation of the fundamental group of $M$ to produce infinite families of Dehn fillings of $M$ whose fundamental groups are non-left-orderable. In this manuscript, we show how to use order-detection of slopes to generalise these techniques to manifolds with multiple torus boundary components, and to produce results that are sharper than what can be achieved with traditional techniques alone. As a demonstration, we produce an infinite family of hyperbolic links where many of the manifolds arising from Dehn filling have non-left-orderable fundamental groups. The family includes the Whitehead link, and in that case we produce a collection of non-left-orderable Dehn fillings that precisely matches the prediction of the L-space conjecture.
\end{abstract}

\maketitle

\section{Introduction}

The L-space conjecture posits that an irreducible rational homology $3$-sphere $M$ admits a coorientable taut foliation if and only if $\pi_1(M)$ is left-orderable, and that this happens if and only if $M$ is not an $L$-space. For short, we will often abbreviate each of these properties by saying that $M$ is CTF, $M$ is LO, or $M$ is NLS, respectively.

The behaviour of $L$-spaces with respect to Dehn surgery on a knot in $S^3$ is well understood. If $M$ is the complement of a knot $K$ in $S^3$ and $g(K)$ is the knot genus, then Dehn filling $M$ either produces only NLS manifolds, or it produces NLS manifolds for precisely the Dehn fillings along slopes less than $2g(K)-1$ \cite{OS05}; knots with the latter property are known as L-space knots. Many authors have therefore searched for parallel results describing intervals of slopes for which Dehn filling of $M$ yields a manifold which is CTF, or which is LO (e.g. \cite{CD18, LR14}). 


For Dehn fillings of link complements, the situation is complicated by the simultaneous filling of multiple torus boundary components. In analogy with the case of knots, we call an $n$-component link $L$ an L-space link if there exist positive integers $k_1, \ldots, k_n$ such that any Dehn surgery on $L$ with integral surgery coefficients $r_1, \ldots, r_n$ satisfying $$(r_1, \ldots, r_n) \in \prod_{i=1}^n [k_i, \infty)$$ always yields an L-space.
Focusing on the case of $2$-component links, the pairs of slopes $(r_1, r_2)$ for which the corresponding Dehn filled manifolds are NLS, CTF or LO define regions in the plane. As a particular example, the mirror image of the Whitehead link\footnote{It is our convention that $L5a1$ in the Thistlethwaite table is the Whitehead link.  But we have chosen our setup to agree with \cite{santoro2022spaces}, where $S^3_{*,-1}(\LLL_0)$ is complement of the figure-eight knot, and $S^3_{1,1}(\LLL_0)$ is the Poincaré homology sphere---so we describe our link as the mirror image.} is an L-space link. Santoro established in \cite{santoro2022spaces} that $(r_1,r_2)$-surgery on the mirror image of the Whitehead link produces an NLS manifold if and only if $r_1< 1$ or $r_2< 1$; a CTF manifold exactly when $r_1< 1$ or $r_2< 1$; and when one of $r_1, r_2$ is an integer, a LO manifold precisely when $r_1< 1$ or $r_2< 1$.

Many examples of L-space links also appear in \cite{Liu2017}, and progress towards characterising the possible shapes of regions of surgery coefficients that yield L-spaces can be found in \cite{GORSKY2018386,gorsky2020surgery,Liu2021}. In particular, they studied two-component links having linking number zero or with unknotted components, and showed that the region where pairs of integral surgery coefficients yield L-spaces may not be a product of intervals.

\begin{figure}[h]
    \begin{center}
        \includegraphics[width = 4.5cm]{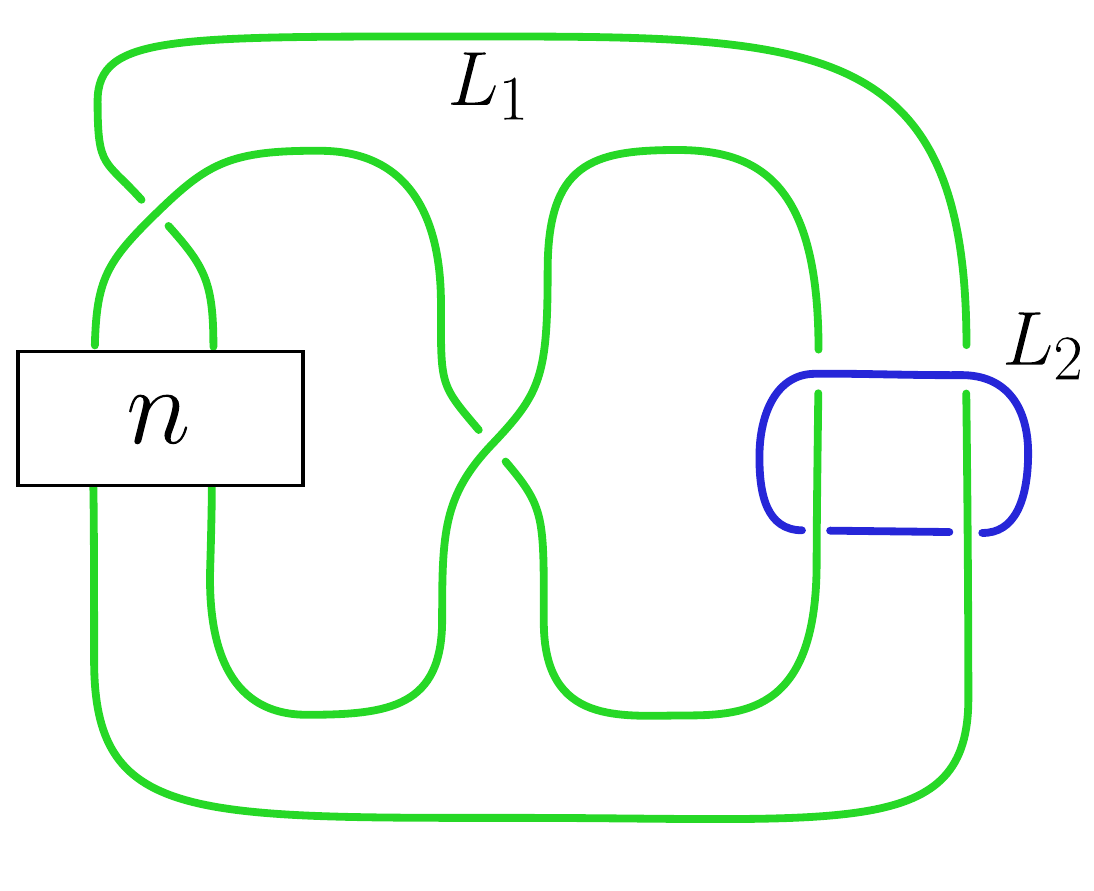}
    \end{center}
    \caption{The two-component link $\LLL_n$; the block represents $n$ full twists}
    \label{fig:linklnn}
\end{figure}

In this manuscript, we use definitions and techniques related to order-detection of slopes as in \cite{BC17, BC23, BGYpreprinta} to generalise the computational techniques of \cite{CW11, CW12} to the case of links. For a $3$-manifold $M$ with torus boundary components $T_1, \ldots, T_n$, order-detection of slopes is a technique developed in \cite{BC17, BC23, claylu2024} for recording the boundary behaviour of left-orderings of $\pi_1(M)$ upon restriction to the peripheral subgroups $\pi_1(T_i)$. It therefore serves to considerably simplify the tracking of boundary behaviour of left-orderings throughout our computations, where we consider the family of links $\LLL_n$ in Figure \ref{fig:linklnn}. We show that:
\begin{theorem}
    \label{thm:introslopes}
    For each integer $n \geq 0$, let $\LLL_n$ denote the link depicted in Figure~\ref{fig:linklnn}. If $(r_1, r_2) \in (2n+2, \infty) \times (2, \infty)$ is a pair of rational numbers, then $(r_1, r_2)$-surgery on $\LLL_n$ produces a manifold which is non-LO.
\end{theorem}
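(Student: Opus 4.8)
The plan is to work throughout with the link exterior $M_n$ of $\LLL_n$, whose boundary is a union of two tori $T_1, T_2$, and to replace the analysis of the (complicated, $n$-dependent) filled groups $\pi_1(M_n(r_1,r_2))$ by an analysis of left-orderings of the single group $\pi_1(M_n)$ together with their restrictions to the peripheral subgroups $\pi_1(T_1),\pi_1(T_2)$. \textbf{Step 1: a presentation and the peripheral structure.} Since $\LLL_n$ is obtained from $\LLL_0$ by inserting $n$ full twists along a band and $\LLL_0$ is a two-bridge link, $\pi_1(M_n)$ has a two-generator, one-relator presentation $\langle a,b \mid r_n\rangle$ in which $a$ and $b$ are meridians of the two components; from a diagram one reads off the longitudes $\lambda_1,\lambda_2$ as explicit words in $a,b$ and records precisely how $r_n$, and the word representing $\lambda_1$, depend on $n$ (this dependence lives entirely in a subword that traverses the twist region). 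This is a routine Wirtinger computation; the one thing to get exactly right is the form of $\lambda_1$ relative to $a=\mu_1$, because the constant $2n+2$ will be traced to it.

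\textbf{Step 2: reduction to an order-detection statement.} Using the order-detection results for manifolds with several torus boundary components recalled above \cite{BC17,BC23,claylu2024}, if $\pi_1(M_n(r_1,r_2))$ were left-orderable then some left-ordering of $\pi_1(M_n)$ would detect the slope $r_1$ on $T_1$ and the slope $r_2$ on $T_2$ simultaneously. So it is enough to prove the following: \emph{no left-ordering of $\pi_1(M_n)$ detects a pair of slopes $(r_1,r_2)$ with $r_1>2n+2$ and $r_2>2$.} One must also check the mild hypotheses of that machinery (e.g. irreducibility of $M_n$ and that the fillings in question avoid the finitely many exceptional small manifolds), which follow from hyperbolicity of $\LLL_n$. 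The point of passing to order-detection here is that we never touch the $n$-dependent filled groups; everything happens inside the one group $\pi_1(M_n)$ and its two peripheral copies of $\ZZ^2$.

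\textbf{Step 3: the order-detection computation.} Suppose $<$ is a left-ordering of $\pi_1(M_n)$ detecting $r_1=p_1/q_1$ on $T_1$ and $r_2=p_2/q_2$ on $T_2$. Detection of a slope on $T_i$ constrains the signs and the relative cofinality of $\mu_i$ and $\lambda_i$ in $(\pi_1(T_i),<)$; combining this with the word for $\lambda_1$ from Step 1, the hypothesis $r_1>2n+2$ forces $a$ --- up to a bounded, $n$-independent correction coming from the twist subword --- onto a prescribed side of $1$, while $r_2>2$ imposes the analogous condition at $T_2$. Pushing these sign constraints through the relator $r_n$, which is the only thing coupling the two boundary tori, and iterating along the twist region, yields an element forced to be both $<1$ and $>1$. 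This is the Clay--Watson style contradiction of \cite{CW11,CW12}, now carried out once on $\pi_1(M_n)$ rather than infinitely often on the filled groups.

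\textbf{The main obstacle} is Step 3, and specifically the \emph{sharpness} of the constants: $2n+2$ should appear as the self-linking/writhe contribution of the $n$-twist band plus the fixed contribution $2$ of the Whitehead clasp, and locating exactly where the strict inequalities $r_1>2n+2$ and $r_2>2$ (as opposed to $\ge$) enter requires a careful boundary-case analysis of the convexity/cofinality condition defining detection. This bookkeeping is precisely what order-detection is designed to streamline, but it remains the technical heart of the argument, and controlling the uniform $n$-dependence of the twist region without letting the estimates drift is the delicate point.
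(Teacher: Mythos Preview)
Your overall architecture (Steps 1--2) matches the paper: compute a two-generator presentation of $\pi_1(M_n)$ with explicit peripheral words, then reduce left-orderability of the filled manifold to an order-detection statement on $\pi_1(M_n)$ via Theorem~\ref{thm:loimplieswkdet} (checking the mild hypothesis of Lemma~\ref{lem:notcontainedLn}). The gap is in Step~3, and it is exactly the point you flag as the ``main obstacle'' without actually resolving.

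A single Clay--Watson sign-pushing computation on $\pi_1(M_n)$, of the kind you describe, does \emph{not} reach the constant $2n+2$. The paper's direct computation (Theorem~\ref{thm:lnnnotweak}) only rules out weak order-detection on the smaller region $(4n+2,\infty)\times(2,\infty)$; this is where the sign constraints alone bottom out. The improvement from $4n+2$ down to $2n+2$ is not a matter of tighter bookkeeping in that same argument---it comes from an extra layer of structure that your outline does not invoke. Concretely, the paper proceeds in two further moves. First, Lemma~\ref{lem:cofinalintervalLn} shows that if $[\alpha_1]\in(2n+2,\infty)$ is weakly detected by $\mathfrak{o}$, then $\pi_1(T_1)$ cannot be $\mathfrak{o}$-cofinal; the proof exploits that positive $\mathfrak{o}$-cofinal elements form a conjugacy-closed subsemigroup, rewriting $m^{2n+2}l$ as a product of conjugates of $m$. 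This is a genuinely different computation from the sign-only argument, and it is what produces the constant $2n+2$. Second, Corollary~\ref{cor:bddrangeLn} uses the new multi-boundary cofinality machinery of Section~\ref{sec:cofinal} (Lemma~\ref{lem:Cexists} and Theorems~\ref{thm:generalcofinal}, \ref{thm:2boundarycase}, which generalise \cite[Theorem~1.7]{BC23}) together with the $(4n+2,\infty)\times(2,\infty)$ result to force $\pi_1(T_1)$ to be $\mathfrak{o}$-cofinal whenever $(\emptyset,\{2\};[\alpha_1],[\alpha_2])$ is detected with $[\alpha_2]\in(2,\infty)$. Playing these two facts against each other yields Corollary~\ref{cor:nondetectedLn}, which is the statement actually needed.

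In short: the ``cofinality'' you mention in passing is not just a constraint inside $\pi_1(T_i)$ but cofinality of $\pi_1(T_i)$ inside all of $\pi_1(M_n)$, and establishing it from the non-detection result on the other boundary component is precisely the content of Section~\ref{sec:cofinal}. Without that mechanism your Step~3, as written, would prove the theorem only for $(r_1,r_2)\in(4n+2,\infty)\times(2,\infty)$.
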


In fact, for $(r_1, r_2) \in (4n+2, \infty) \times (2, \infty)$ we show something stronger, namely that no such pair of slopes is weakly order-detected. See Theorem \ref{thm:lnnnotweak} and Definition \ref{def:order-detect}. Our results for this family of links are not sharp, however, in the following sense: Assuming the truth of the L-space conjecture, \cite[Lemma 3.8]{ding2021} suggests that the manifolds arising from $(r_1, r_2)$-surgery on $\LLL_n$ should be non-LO if and only if $(r_1, r_2) \in [2n+1, \infty) \times [1, \infty)$. With this in mind, we focus on the case $n=0$, which is the mirror of the Whitehead link, and are able to sharpen our results.

\begin{theorem}
    \label{thm:introwhitehead}
    If $(r_1, r_2) \in [1, \infty) \times [1, \infty)$ is a pair of rational numbers, then $(r_1, r_2)$-surgery on the mirror image of the Whitehead link produces a manifold which is non-LO.
\end{theorem}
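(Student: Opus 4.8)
The plan is to deduce Theorem~\ref{thm:introwhitehead} from the order-detection framework, reusing the mechanism that proves Theorem~\ref{thm:lnnnotweak} but sharpening the estimates in the special case $n=0$. Write $M$ for the exterior of the mirror image of the Whitehead link, with peripheral tori $T_1,T_2$ and the usual identification of the slopes on each $T_i$ with $\QQ\cup\{\infty\}$. Since weak order-detection of a pair of slopes is a necessary condition for left-orderability of the corresponding Dehn filling, it is enough to show that no pair $(r_1,r_2)\in(1,\infty)\times(1,\infty)$ is weakly order-detected by $\pi_1(M)$, and separately that no point on the two extreme edges $\{1\}\times[1,\infty)$ and $[1,\infty)\times\{1\}$ yields a left-orderable filling. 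For the edges: filling the first component of the mirror Whitehead link along slope $1$ produces the exterior of a torus knot, so every $(1,r_2)$-surgery on $M$ is a surgery with coefficient $r_2$ on that torus knot, and for all rational $r_2\ge 1$ the resulting Seifert fibred rational homology spheres are non-left-orderable by the known description of left-orderability for surgeries on torus knots; the edge $r_2=1$ is symmetric.

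For the open region, Theorem~\ref{thm:lnnnotweak} specialised to $n=0$ already shows that no pair in $(2,\infty)\times(2,\infty)$ is weakly order-detected, and since the mirror Whitehead link exterior carries a self-homeomorphism exchanging $T_1$ and $T_2$, weak order-detection of $(r_1,r_2)$ is equivalent to that of $(r_2,r_1)$; so only the strip $r_1\in(1,2]$, $r_2\in(1,\infty)$ remains. There I would re-run the argument behind Theorem~\ref{thm:lnnnotweak}: assume for contradiction that some left-ordering of $\pi_1(M)$ weakly order-detects such a pair, pass to the induced action of $\pi_1(M)$ on $\RR$, and play the fixed-point constraints imposed by weak order-detection of $r_1$ and of $r_2$ against the single relator of a Wirtinger presentation of $\pi_1(M)$, following the displacement of a well-chosen point to a contradiction.

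The step I expect to be the main obstacle is that this computation has to be pushed considerably further than in the general case. For $\LLL_n$ the peripheral relator carries $n$ additional full twists, which weakens the displacement estimates coming out of the sign analysis and is what forces the bound $(4n+2,\infty)\times(2,\infty)$; at $n=0$ the relator is short, and the content of the argument is that the same estimates, organised carefully, collapse the remaining gap---from $2$ down to $1$ on \emph{both} cusps. Checking that the bookkeeping really does reach $1$ on each cusp, rather than stalling at some intermediate slope, is the crux; it is also why Theorem~\ref{thm:introwhitehead} claims only non-left-orderability rather than the formally stronger failure of weak order-detection, since along the extreme edges $r_1=1$ and $r_2=1$ the slopes in question may genuinely be weakly order-detected, and non-left-orderability there must be imported from the Seifert fibred structure. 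Once the interior estimate, the edge cases, and the symmetry reduction are combined, Theorem~\ref{thm:introwhitehead} follows, and comparison with the results of \cite{santoro2022spaces} shows that the region $[1,\infty)\times[1,\infty)$ cannot be enlarged.
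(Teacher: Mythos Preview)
Your handling of the boundary edges and your use of the $T_1\leftrightarrow T_2$ symmetry both match the paper exactly. The gap is in the interior $(1,\infty)\times(1,\infty)$, where your plan is to repeat the sign-chase of Theorem~\ref{thm:lnnnotweak} in $\pi_1(M)$ with tighter bookkeeping. The paper does \emph{not} do this, and there is no indication that a direct sign analysis in $\pi_1(M)$ can be pushed below the threshold $(2,\infty)\times(2,\infty)$ already obtained in Theorem~\ref{thm:lnnnotweak}; in fact the paper never claims non-weak-detection on all of $(1,\infty)^2$ in $\pi_1(M)$.

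What the paper actually does is change the ambient group. It Dehn fills the second cusp along $p/q\in(1,\infty)$ and works in the knot manifold $S^3_{*,p/q}(\wh)$. There two new ingredients appear. First, a short computation (Lemma~\ref{lem:whnormalclosure}) exhibits a word lying in both $N(ml)$ and $N(m)$, the root- and conjugacy-closed subsemigroups they generate; this is the analogue of Nie's property~(D). Second, a cofinality amplification principle (Theorem~\ref{thm:cofinaltrick}) says that once \emph{any} slope fails to be weakly detected, the condition $N(ml)\cap N(m)\neq\emptyset$ forces every slope in $(1,\infty)$ to fail as well. The seed for this amplification is supplied either by Theorem~\ref{thm:lnnnotweak} (when $p/q>2$) or by a fresh sign computation in the filled group (Proposition~\ref{prop:whprop}) ruling out slopes in $(3,4)$ when $p/q\in(1,2]$. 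The upshot (Corollary~\ref{cor:whcor}) is that no slope in $(1,\infty)$ is weakly detected in $\pi_1(S^3_{*,p/q}(\wh))$, and this is pulled back to $\pi_1(M)$ as the failure of $(\{2\},\{2\};\cdot,\cdot)$-detection, which together with the symmetry and Theorem~\ref{thm:loimplieswkdet} yields non-LO. So the missing idea in your proposal is precisely this cofinality trick operating in the one-cusped filling; without it you have no mechanism to close the gap between slope $2$ and slope $1$.
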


This result aligns precisely with the non-left-orderability result that is predicted to hold based upon \cite{santoro2022spaces}, and the L-space conjecture. To achieve this sharper result, we show that certain fillings of the Whitehead link obey a property akin to Nie's property (D) \cite{Niepreprint}, see Lemma \ref{lem:whnormalclosure} and Proposition \ref{prop:whprop}. In turn, this allows us to use slope detection results from \cite{BC23}, see Theorem \ref{thm:cofinaltrick}.

\subsection{Organisation of the manuscript} We review background on left-orderings and slope detection in Section \ref{sec:background}. Section \ref{sec:cofinal} contains generalisations of \cite{BC23} to the case of multiple boundary components. We prove Theorems \ref{thm:introslopes} and \ref{thm:introwhitehead} in Section \ref{sec:families}, and conclude that section with observations concerning the behaviour of left-orderings of the figure-eight knot group.

\section{Background}\label{sec:background}

\subsection{Left-orderings}
Let $G$ be a nontrivial group. A left-ordering $\ooo$ of $G$ is defined by a strict total ordering $<_{\ooo}$ of the elements of $G$ such that $g <_\ooo h$ implies $fg <_\ooo fh$ for all $f, g, h \in G$. A left-ordering determines a nonempty set called the \emph{positive cone} $P(\ooo)$, defined by $P(\ooo)=\{g\in G\mid g>_\ooo id\}$. Conversely, a nonempty set $P(\ooo)$ satisfying $G=\{1\}\sqcup P(\ooo)\sqcup P(\ooo)^{-1}$ and $P(\ooo)\cdot P(\ooo)\subset P(\ooo)$ determines a left-ordering $\ooo$ by the prescription that $g<_\ooo h$ if and only if $g^{-1}h\in P(\ooo)$. Elements in $P(\ooo)$ (resp. $P(\ooo)^{-1}$) are said to be \emph{positive} (resp. \emph{negative}). A group is left-orderable if it admits a left-ordering; we adopt the convention that the trivial group is not left-orderable. If $\ooo$ is a left-ordering of $G$ and $H$ is a subgroup of $G$, then $\ooo|_H$ means the restriction of the left-ordering to $H$ in the canonical way.

We denote by $\LO(G)$ the set of all left-orderings of $G$. Identifying each left-ordering with its positive cone, we can view $\LO(G)$ as a subset of the power set $\{0,1\}^G$. Equipping $\{0,1\}^G$ with the product topology, $\LO(G)$ is a closed subset of $\{0,1\}^G$. As $\{0,1\}^G$ is compact, the space $\LO(G)$ is compact, and it is also totally disconnected, Hausdorff, and metrisable when $G$ is countable. There is a $G$-action on $\LO(G)$ by homeomorphisms, defined by $P(g\cdot\ooo)=gP(\ooo)g^{-1}$. See \cite{Sik04, CR16} for more details.


A subgroup $C \subset G$ of a left-orderable group is said to be \emph{convex} with respect to $\ooo$ (or simply $\ooo$-convex) if for all $f\in G$ and $g,h\in C$, $g<_\ooo f<_\ooo h$ implies $f\in C$. This condition is equivalent to the requirement that left cosets $\{gC\mid g\in G\}$ admit a total ordering $\prec$ defined by \[gC \prec hC \iff g <_\ooo h \mbox{ whenever } gC \neq hC.\]
A subgroup $C$ of $G$ is said to be \emph{relatively convex} if it is $\ooo$-convex for some left-ordering $\ooo$. An important example of $\ooo$-convex subgroups arises from lexicographic orderings. Suppose that
\[\xymatrix{\{1\}\ar[r] &N\ar[r]^i&G\ar[r]^p&H\ar[r]&\{1\}}\] is a short exact sequence of nontrivial groups, and that $\ooo_N,\ooo_H$ are left-orderings of $N,H$ respectively. Then $G$ admits a lexicographic ordering $\ooo$ defined by $P(\ooo)=i(P(\ooo_N))\cup p^{-1}(P(\ooo_H)).$ One can check that the subgroup $i(N)$ is $\ooo$-convex.



If $G$ is a countable group, every left-ordering $\ooo$ of $G$ gives rise to an action $\rho_\ooo :G \rightarrow \mathrm{Homeo}_+(\mathbb{R})$ on the real line $\RR$ via the \emph{dynamic realisation}, whose construction is roughly as follows. Fix a left-ordering $\ooo$ of $G$ and choose an order-preserving embedding $t:G\to \RR$ which is \emph{tight}, meaning that for any nontrivial open interval $(a,b)\in \RR\setminus t(G)$, there exist $g,h\in G$ with $g<_\ooo h$ and having no elements of $G$ between them, such that $(a,b)\subset (t(g),t(h))$. Next, define $\rho_\ooo :G \rightarrow \mathrm{Homeo}_+(\mathbb{R})$ in three steps: First, on $t(G) \subset \mathbb{R}$ set $\rho_\ooo(g)(t(h)) = t(gh)$ for all $g, h \in G$, then extend the action continuously to $\overline{t(G)}$, and finally extend affinely across any remaining gaps. This construction of $\rho_\ooo$ depends on the choice of tight embedding $t: G \rightarrow \mathbb{R}$, but is well-defined up to conjugation by elements of $\mathrm{Homeo}_+(\mathbb{R})$. See \cite{BC23} and \cite{Navas10a} for more details.

\subsection{Slope detection and \texorpdfstring{$3$-manifolds}{three-manifolds}}

Unless otherwise indicated, from here forward we will use $M$ to denote a compact, connected, orientable 3-manifold whose boundary is a union of incompressible tori, $\partial M=T_1\cup \dots\cup T_n$. In particular, we call $M$ a knot manifold if $n=1$. When $M$ is the complement of a link $L = L_1 \cup \dots \cup L_n$ in $S^3$, our convention is that $T_i \subset \partial M$ is the boundary torus resulting from the link component $L_i$.

A \emph{slope} on $T_i$ is defined to be an element $[\alpha]$ in $\mathbb{P}H_1(T_i;\RR)$ (the projective space of $H_1(T_i;\RR)$), where $\alpha\in H_1(T_i;\RR)$. To simplify notation, we write $\mathcal{S}(T_i)$ for the set of slopes on $T_i$ and define $\mathcal{S}(M)=\mathcal{S}(T_1)\times \dots \times \mathcal{S}(T_n)$. Identifying $H_1(T_i; \mathbb{Z})$ with the integer lattice points in $H_1(T_i; \mathbb{R})$, we say that a slope $[\alpha]$ is \emph{rational} if $\alpha \in H_1(T_i; \mathbb{Z})$, and \emph{irrational} otherwise. We call a tuple of slopes $([\alpha_1], \dots , [\alpha_n])\in \mathcal{S}(M)$ rational if $[\alpha_i]$ is rational for every $i$; moreover, if $[\alpha]$ is rational, then we always assume that $\alpha$ is a primitive element in $H_1(T_i;\ZZ)=\pi_1(T_i)$. We use $\mathcal{S}_{rat}(M)$ to denote the tuples of rational slopes.

Since the inclusion maps $\pi_1(T_i)\to \pi_1(M)$ are injective, we can identify each group $\pi_1(T_i)$ with a subgroup of $\pi_1(M)$ that is isomorphic to $\ZZ^2$. We fix such an identification for each $i$ and from here forward simply write $\pi_1(T_i)\subset \pi_1(M)$. Note that $S(T_i)$ is homeomorphic to $S^1$, so we may identify $S(T_i)$ with $\RR\cup \{\infty\}$ in a way that identifies the rational slopes with $\mathbb{Q} \cup \{\infty\}$ as follows. Fixing a meridian and longitude pair $(m_i,l_i)$ on $T_i$, for $r=p/q\in \QQ$ in lowest terms, set $\alpha_r=m_i^pl_i^q$ which is viewed as an element in $H_1(T_i; \ZZ)$. Then we make the identification $r\mapsto [\alpha_r]$ and $\infty\mapsto [m_i]$.

When a meridian and longitude basis for $\pi_1(T_i)$ is chosen for all $i \in \{1, \ldots, n\}$, the manifold obtained by performing Dehn surgery along each $T_i$ with slope $r_i \in \QQ \cup \{ \infty \}$ will be denoted by $M(r_1, r_2, \dots, r_n)$. When $M$ is a link complement, namely, $S^3$ with an open regular neighbourhood of the link $L$ removed, we will write $S^3_{r_1,r_2,\dots, r_n}(L)$ instead. For a two-component link $L = L_1 \cup L_2$, we use the notation $S^3_{r_1, *}(L)$ to mean the 3-manifold obtained by performing Dehn filling along $T_1$ with slope $r_1$ while leaving $T_2$ unfilled; similarly, $S^3_{*, r_2}(L)$ denotes the manifold obtained by filling only along $T_2$ with slope $r_2$.

We now return to considering left-orderings, where the case $G=\ZZ^2$ serves an important role. Firstly, observe that for each left-ordering $\ooo$ on $G$, there is a line $L(\ooo)\subset \ZZ^2\otimes \RR=\RR^2$ uniquely determined by the property that all the elements of $\ZZ^2$ lying to one side of it are positive and all the elements lying to the other side are negative; see e.g. \cite[Lemma 3.3]{CR12}. The line $L(\ooo)$ is said to have \emph{rational slope} if $L_0=L(\ooo)\cap \ZZ^2\cong \ZZ$, in which case $L_0$ is $\ooo$-convex. Otherwise, it is said to have \emph{irrational slope}. For a given rational (resp. irrational) slope, there are precisely four (resp. two) left-orderings giving rise to the particular slope. If we give the set of lines through the origin in $\RR^2$ the standard topology and write $[\ell]$ for the image of such a line $\ell\subset \RR^2$ in the resulting copy of $\RR P^1\cong S^1$, then the map $\mathcal{L}:\LO(\ZZ^2)\to \RR P^1$ given by $\mathcal{L}(\ooo)=[L(\ooo)]$ is continuous \cite{BC23}.


Denote by $r_i: \LO(\pi_1(M))\to\LO(\pi_1(T_i))$ the restriction map of left-orderings. The \emph{slope map} $s:\LO(M)\to \mathcal{S}(M)$ is defined by $s(\ooo)=(\mathcal{L}(r_1(\ooo)),\dots,\mathcal{L}(r_i(\ooo)))$. The slope map $s$ is continuous since $r_i$ is continuous for all $i$, and $\mathcal{L}$ is continuous. 
One can show that this means $[L(r_i(\mathfrak{o}))]$ is rational if $L(r_i(\mathfrak{o})) \cap H_1(T_i; \mathbb{Z}) \cong \mathbb{Z}$; otherwise, the slope is irrational. So the terminology we have introduced concerning rational and irrational slopes is consistent.

\begin{definition}[\cite{claylu2024}]\label{def:order-detect}
    Suppose that $\mathfrak{o}$ is a left-ordering of $\pi_1(M)$, and let $J \subset K \subset \{ 1, \ldots, n\}$ and $([\alpha_1], \ldots, [\alpha_n])\in \mathcal{S}(M)$. We say that $(J, K; [\alpha_1], \ldots, [\alpha_n])$ is \emph{order-detected} by $\ooo$ if
    \begin{enumerate}
        \item[O1.] $s(\mathfrak{o}) = ([\alpha_1], \ldots, [\alpha_n])$;
        \item[O2.] for all $g \in \pi_1(M)$, we have $s(g \cdot \ooo) = ([\beta_1], \ldots, [\beta_n])$ where $[\beta_i] = [\alpha_i]$ for all $i \in K$;
        \item[O3.] there exists an $\mathfrak{o}$-convex normal subgroup $C$ such that for all $i\in \{1,\dots,n\}$ if $[\alpha_i]$ is rational then $\pi_1(T_i)\cap C\leq \langle \alpha_i \rangle $ with $\pi_1(T_i)\cap C = \langle \alpha_i \rangle $ whenever $i \in J$, and if $[\alpha_i]$ is irrational then $\pi_1(T_i)\cap C=\{1\}$.
    \end{enumerate}
\end{definition}

We also say that $(J, K; [\alpha_1], \ldots, [\alpha_n])$ is $\ooo$-detected, or that $\ooo$ order-detects $(J, K; [\alpha_1], \ldots, [\alpha_n])$.  We say that $(J, K; [\alpha_1], \ldots, [\alpha_n])$ is order-detected if it is $\ooo$-detected for some left-ordering $\ooo$ of $\pi_1(M)$.

\begin{remark}
    \label{rem:sub-detection}
    Note that if $(J, K; [\alpha_1], \ldots, [\alpha_n])$ is order-detected and $J' \subset J$, $K'\subset K$ and $J'\subset K'$, then $(J', K'; [\alpha_1], \ldots, [\alpha_n])$
    is also order-detected.
\end{remark}

If $(J, K; [\alpha_1], \ldots, [\alpha_n])$ is order-detected, we say that $[\alpha_i]$ is \emph{weakly order-detected} for each $i=1,\dots,n$; and is \emph{strongly order-detected} if $i \in J$, and is \emph{(regularly) order-detected} if $i \in K$. If $M$ is a knot manifold, the language we have just introduced (strong detection, weak detection, detection) agrees with \cite{BC23}, in the sense that $[\alpha] \in \mathcal{S}(M)$ is weakly detected if $s(\ooo) = [\alpha]$, detected if $s(g \cdot \ooo) = [\alpha]$ for all $g \in \pi_1(M)$, and strongly detected if $[\alpha]$ is irrational or $[\alpha]$ is rational and there is an $\ooo$-convex normal subgroup $C$ such that $C \cap \pi_1(T) = \langle \alpha \rangle$.

The notion of cofinality is strongly related to order-detection of slopes, and plays a central role in many of our arguments. For a subset $A$ of $G$, its $\ooo$-convex hull is defined to be \[C(A)=\{g\in G\mid a_1<_\ooo g<_\ooo a_2 \mbox{ for some }a_1,a_2\in A\}.\] We say a subset $A$ of $G$ is $\ooo$-\emph{cofinal} if $C(A)=G$ and an element $g\in G$ is $\ooo$-\emph{cofinal} if $C(\inner{g})=G$. An essential result, which we use both in its form below and in a more general form adapted to deal with multiple boundary components (see Theorem \ref{thm:generalcofinal}), is the following.

\begin{theorem}[{\cite[Theorem 1.7]{BC23}}] Let $M$ be a knot manifold. If not all the slopes in $S(M)$ are weakly order-detected, then $\pi_1(T)$ is $\ooo$-cofinal for every left-ordering $\ooo$ of $\pi_1(M)$.
\end{theorem}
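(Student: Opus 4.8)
The plan is to prove the contrapositive: assuming that $\pi_1(T)$ fails to be $\ooo_0$-cofinal for \emph{some} left-ordering $\ooo_0$ of $\pi_1(M)$, I will show that every slope in $\mathcal{S}(M)$ is weakly order-detected, i.e. that the slope map $s\colon\LO(\pi_1(M))\to\mathcal{S}(M)\cong\RR P^1$ is onto. First I would pass to the $\ooo_0$-convex hull $C=C(\pi_1(T))$. Since $\pi_1(T)$ is not $\ooo_0$-cofinal, $C$ is a \emph{proper} $\ooo_0$-convex subgroup of $\pi_1(M)$ containing $\pi_1(T)$, and by its very construction as a convex hull, $\pi_1(T)$ is cofinal in $C$ with respect to $\ooo_0|_C$. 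The goal then becomes to realise every slope of $\pi_1(T)$ as $s(\ooo)$ for some left-ordering $\ooo$ of $\pi_1(M)$.

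The key technical device is a replacement lemma. Because $C$ is $\ooo_0$-convex, the set of left cosets $\pi_1(M)/C$ carries a total order $\prec$ defined by $gC\prec hC\iff g<_{\ooo_0}h$ (for $gC\neq hC$), and convexity makes $\prec$ invariant under left multiplication by $\pi_1(M)$. Given \emph{any} left-ordering $\ooo''$ of $C$, I would then define a subset $P=\{g\in\pi_1(M) : gC\succ C\}\cup P(\ooo'')$ of $\pi_1(M)$, and verify — using the left-invariance of $\prec$ together with the fact that $C$ is the stabiliser of the coset $C$ — that $P$ is the positive cone of a left-ordering $\ooo$ with $\ooo|_C=\ooo''$; in particular $\ooo|_{\pi_1(T)}=\ooo''|_{\pi_1(T)}$. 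This reduces the entire problem to a statement about $C$ alone: every slope of $\pi_1(T)$ must arise as the slope of $\ooo''|_{\pi_1(T)}$ for some $\ooo''\in\LO(C)$.

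To realise the slopes inside $C$ I would analyse the $\ooo_0$-convex series of $C$. Let $C_1$ be the largest $\ooo_0$-convex subgroup of $C$ not containing $\pi_1(T)$ (the union of the chain of such subgroups); it is normal in $C$, the quotient $C/C_1$ has no proper nontrivial convex subgroup and is therefore Archimedean, hence by Hölder's theorem order-isomorphic to a subgroup of $(\RR,+)$ and in particular torsion-free abelian, and cofinality forces the image of $\pi_1(T)$ to be cofinal in $C/C_1$. When $\pi_1(T)\cap C_1=\{1\}$ this image is a rank-two subgroup of the abelian group $C/C_1$, so I can prescribe any slope on it by choosing a linear functional on $(C/C_1)\otimes\RR$ (refined lexicographically), then lift through the convex normal subgroup $C_1$ by a lexicographic construction; together with the replacement lemma this detects every slope. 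The main obstacle is the remaining case $\pi_1(T)\cap C_1=\langle\alpha\rangle$ of rank one, where $\ooo_0$ only exhibits the rational slope $[\alpha]$: here every lexicographic ordering refining the given convex series is pinned to $[\alpha]$, so one must manufacture orderings of $C$ that are \emph{transverse} to the $\ooo_0$-convex series. My strategy is a bootstrap — produce a single ordering of $C$ whose restriction to $\pi_1(T)$ has irrational slope, extend it to $\pi_1(M)$ by the replacement lemma (keeping $\pi_1(T)$ non-cofinal, since $C$ remains convex), and re-run the previous step for this new witness, which now lands in the rank-zero case and hence detects all slopes. Showing that such a transverse, irrational-slope ordering always exists — that the cofinal rank-two embedding $\pi_1(T)\hookrightarrow C$ genuinely forces $C$ to carry orderings unrelated to the series of $\ooo_0$ — is the crux, and the step I expect to require the most care.
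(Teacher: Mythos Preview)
Your approach is purely order-theoretic, whereas the argument the paper inherits from \cite{BC23} (sketched in the proof of Theorem~\ref{thm:generalcofinal}) uses $3$-manifold topology in an essential way. After obtaining a proper convex subgroup $C\supset\pi_1(T)$, one passes to the cover $W\to M$ with $\pi_1(W)=C$; the torus $T$ lifts to a boundary torus of $W$, and for all rational slopes $[\gamma]$ outside a nowhere-dense exceptional set the Dehn filling $W(\gamma)$ is irreducible with infinite first homology, so $\pi_1(W(\gamma))=C/\inner{\inner{\gamma}}_C$ is left-orderable by \cite{BRW2005}. The resulting lexicographic ordering of $C$ has $\langle\gamma\rangle$ convex in $\pi_1(T)$ and hence detects $[\gamma]$; your replacement lemma (which is \cite[Proposition~5.2]{BC23}) then promotes it to $\pi_1(M)$, and compactness of $\LO(\pi_1(M))$ together with continuity of the slope map handles the remaining slopes.

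The gap you flag in the rank-one case is real, and I do not see how your bootstrap closes it. You need an ordering of $C$ whose restriction to $\pi_1(T)$ has slope different from $[\alpha]$, but the convex-series data of $\ooo_0|_C$ gives no handle on this: as you observe, every lexicographic ordering refining $C_1\lhd C$ is pinned to $[\alpha]$, and there is no abstract mechanism forcing $C$ to admit orderings transverse to that series. This is precisely where the $3$-manifold hypothesis enters in \cite{BC23}---the required orderings of $C$ come from left-orderability of the Dehn-filled quotients $C/\inner{\inner{\gamma}}_C$, which in turn rests on irreducibility and a first-Betti-number argument with no evident group-theoretic substitute. A secondary issue: the $\ooo_0$-convex hull of a subgroup in a left-ordered group is not in general a subgroup, so your opening move already needs the dynamic-realisation construction of Lemma~\ref{lem:Cexists}, which produces a proper convex subgroup containing $\pi_1(T)$ for a possibly different ordering (one checks that $\pi_1(T)$ remains cofinal in that subgroup, so the rest of your setup survives the substitution).
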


\section{Cofinality, Dehn filling and slope detection}
\label{sec:cofinal}
We generalise the main cofinality result of \cite[Theorem 1.7]{BC23} to the case of a manifold $M$ with multiple boundary components. Our technique for doing so requires the existence of a convex subgroup $C$ containing one of the peripheral subgroups, and in many cases, boundedness of the peripheral subgroup is enough to produce such a subgroup $C$. Below we show how to do this.

\begin{lemma}
    \label{lem:Cexists}
    Suppose that $M$ is a compact, connected, orientable, irreducible $3$-manifold whose boundary consists of incompressible tori $T_1, \dots ,T_n$. Let $J\subset K\subset \{1,\dots,n\}$, and suppose $\ooo$ is a left-ordering of $\pi_1(M)$ $\ooo$ that order-detects $(J, K; [\alpha_1], \dots, [\alpha_n])$. Given a fixed $j\in\{1,2,\dots,n\}$, if $\pi_1(T_j)$
    is not $\ooo$-cofinal, then there exists a left-ordering $\mathfrak{o}'$ of $\pi_1(M)$ and a proper subgroup $C \subset \pi_1(M)$ such that $C$ is $\mathfrak{o}'$-convex, $\pi_1(T_j) \subset C$, and $\ooo'$ order-detects $(\emptyset, \emptyset; [\beta_1], \ldots, [\beta_n])$ where $[\beta_j]=[\alpha_j]$ and $[\beta_i] = [\alpha_i]$ for all $i \in K$.

\end{lemma}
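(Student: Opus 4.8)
The plan is to build $\mathfrak{o}'$ as a lexicographic-type ordering whose convex subgroup is forced to contain $\pi_1(T_j)$, and then verify that the detection data $(\emptyset, \emptyset; [\beta_1],\ldots,[\beta_n])$ survives. First I would exploit the hypothesis that $\pi_1(T_j)$ is not $\mathfrak{o}$-cofinal: its $\mathfrak{o}$-convex hull $C(\pi_1(T_j))$ is a proper subgroup of $\pi_1(M)$ that is $\mathfrak{o}$-convex by construction and contains $\pi_1(T_j)$. The subtlety is that $C(\pi_1(T_j))$ need not be normal, so it is not immediately a witness for O3 with $J=K=\emptyset$; this is where I expect the main work to lie. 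One natural route is to pass to the dynamic realisation $\rho_{\mathfrak{o}}\colon \pi_1(M)\to \homor$: non-cofinality of $\pi_1(T_j)$ means the $\rho_{\mathfrak{o}}(\pi_1(T_j))$-orbit of the basepoint is bounded, say bounded above. Let $\xi = \sup$ of that orbit (a point or $+\infty$; if $+\infty$ we instead look at the bounded side). Then the stabiliser-type set $C = \{ g : \rho_{\mathfrak{o}}(g) \text{ fixes every point of } [\xi,\infty)\}$, or more robustly the convex hull of the orbit closure, gives a convex subgroup; but to get normality I would instead choose $\mathfrak{o}'$ directly rather than reuse $\mathfrak{o}$.

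Concretely, here is the construction I would carry out. Since $C(\pi_1(T_j))$ is $\mathfrak{o}$-convex and proper, the quotient of left cosets $\pi_1(M)/C(\pi_1(T_j))$ carries the induced total order $\prec$ from O2's coset-ordering description, and $\pi_1(M)$ acts on this ordered set by order-preserving bijections (left multiplication). This gives a homomorphism $\pi_1(M)\to \mathrm{Homeo}_+(\RR)$ after a tight embedding of the ordered coset space into $\RR$, i.e. an action with a global fixed-point structure reflecting the convex subgroup. Now pull back: let $N$ be the kernel of the action on the coset space, equivalently the normal core $\bigcap_{g} g\, C(\pi_1(T_j))\, g^{-1}$. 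If $\pi_1(T_j)\subset N$ I am essentially done after choosing any left-ordering; but in general $\pi_1(T_j)\not\subset N$. The fix is to use O2: because $[\alpha_i]=[\beta_i]$ is preserved under the whole $\pi_1(M)$-action for $i\in K$, and $j$ will be arranged to behave well, I would instead take $C$ to be the $\mathfrak{o}'$-convex hull of $\pi_1(T_j)$ for a newly constructed ordering $\mathfrak{o}'$ defined as a lexicographic ordering on the short exact sequence $1\to H \to \pi_1(M)\to \pi_1(M)/H\to 1$, where $H$ is the smallest relatively convex normal subgroup of $\pi_1(M)$ containing $\pi_1(T_j)$ (which is proper precisely because $\pi_1(T_j)$ is not cofinal — this properness claim is the technical heart and is where a result like \cite[Theorem 1.7]{BC23} or a direct dynamical argument must be invoked). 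Ordering $H$ and $\pi_1(M)/H$ so that the restriction to each $\pi_1(T_i)$ realises the slope $[\alpha_i]$ (possible because for a fixed slope there are two or four compatible orderings of $\ZZ^2$, and these restrict compatibly from $H$ or from the quotient depending on whether $\pi_1(T_i)\subset H$), we get $\mathfrak{o}'$ with $i(H)$ being $\mathfrak{o}'$-convex and normal, hence $C:=i(H)$ works for O3 with $J=K=\emptyset$, and $\pi_1(T_j)\subset C$ by construction, and $C$ is proper.

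It remains to check O1 and the slope bookkeeping: by choosing the orderings of $H$ and $\pi_1(M)/H$ to induce the prescribed slopes on each torus, $s(\mathfrak{o}') = ([\beta_1],\ldots,[\beta_n])$ with $[\beta_j]=[\alpha_j]$, and for $i\in K$ we can further insist $[\beta_i]=[\alpha_i]$ since the data from $\mathfrak{o}$ already gives a consistent choice there. Because we only claim detection of $(\emptyset,\emptyset;\cdots)$, condition O2 is automatically satisfied (with $K=\emptyset$ there is nothing to check beyond O1 and O3, though if we want $[\beta_i]=[\alpha_i]$ for $i\in K$ we need those slopes genuinely unchanged, which holds by the compatible choice). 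The main obstacle, to reiterate, is establishing that the normal closure-type subgroup $H$ is a proper subgroup — equivalently that non-cofinality of $\pi_1(T_j)$ is not destroyed by normalising — and I would handle this via the dynamic realisation: if $H$ were all of $\pi_1(M)$ then some conjugate $g\,C(\pi_1(T_j))\,g^{-1}$ would fail to be proper after finitely many steps, contradicting that each conjugate is a proper convex subgroup under the ordering $g\cdot\mathfrak{o}$, whose restriction to $\pi_1(T_j)$ still has slope $[\alpha_j]$ (this uses that $j$ may be taken in $K$, or a direct argument) and hence whose $g\cdot\mathfrak{o}$-convex hull of $\pi_1(T_j)$ is still proper by the cofinality dichotomy.
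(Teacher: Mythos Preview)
Your proposal rests on a misreading of the lemma.  The subgroup $C$ in the conclusion is only required to be a proper $\mathfrak{o}'$-convex subgroup containing $\pi_1(T_j)$; it is \emph{not} required to be normal, and it is not the witness for condition O3.  In fact, since $J=K=\emptyset$, condition O3 is witnessed trivially by the identity subgroup, and condition O2 is vacuous.  So ``$\mathfrak{o}'$ order-detects $(\emptyset,\emptyset;[\beta_1],\ldots,[\beta_n])$'' amounts to nothing more than $s(\mathfrak{o}')=([\beta_1],\ldots,[\beta_n])$.  Once you see this, the detour through normal cores and ``smallest relatively convex normal subgroups'' is entirely unnecessary.

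What remains of your first paragraph---the dynamic realisation and the stabiliser of a limit point---is essentially the paper's argument.  One takes a positive $\gamma\in\pi_1(T_j)$ that is $\mathfrak{o}|_{\pi_1(T_j)}$-cofinal, lets $x_0=\lim_n t(\gamma^n)$ (which exists by non-cofinality), sets $C=\mathrm{Stab}_{\rho_{\mathfrak{o}}}(x_0)$, and builds $\mathfrak{o}'$ so that $C$ is $\mathfrak{o}'$-convex with $\mathfrak{o}'|_C=\mathfrak{o}|_C$.  The genuine work you are missing is the verification that $[\beta_i]=[\alpha_i]$ for $i\in K$ when $\pi_1(T_i)\not\subset C$: this is done in the paper by a compactness argument in $\LO(\pi_1(M))$, passing to a convergent subsequence of $\{\gamma^{n_l}\cdot\mathfrak{o}\}$ and using that, since $i\in K$, each $\gamma^{n_l}\cdot\mathfrak{o}$ still detects $[\alpha_i]$ on $\pi_1(T_i)$.

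Your alternative route has real gaps even on its own terms.  The existence and properness of a ``smallest relatively convex normal subgroup $H$ containing $\pi_1(T_j)$'' is neither defined nor proved: relative convexity is not preserved under arbitrary intersections, and your final-paragraph argument conflates $H=\pi_1(M)$ with the failure of some individual conjugate $gC(\pi_1(T_j))g^{-1}$ to be proper, which are unrelated statements.  And even granting such an $H$, the assertion that one can left-order $H$ and $\pi_1(M)/H$ so that each $\pi_1(T_i)$ inherits the prescribed slope $[\alpha_i]$ is unsupported---you have no control over the left-orderings of these groups.
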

\begin{proof}
    We follow the proof of \cite[Lemma 5.9]{BC23}. Suppose that $\pi_1(T_j)$ is not $\mathfrak{o}$-cofinal and choose a positive element $\gamma \in \pi_1(T_j)$ that is $\mathfrak{o}|_{\pi_1(T_j)}$-cofinal. Choose a tight order-preserving embedding $t: \pi_1(M) \rightarrow \mathbb{R}$ and use it to construct the dynamic realisation $\rho_{\mathfrak{o}} : \pi_1(M) \rightarrow \mathrm{Homeo}_+(\mathbb{R})$.

    Since $\gamma$ is not $\mathfrak{o}$-cofinal, the limit $\lim_{n} t(\gamma^n) = x_0$ exists. Since $\rho_{\mathfrak{o}} (\gamma)(x_0) =x_0$ and $\gamma$ is $\mathfrak{o}|_{\pi_1(T_j)}$-cofinal, one can show that $\rho_{\mathfrak{o}}(h)(x_0) = x_0$ for all $h \in \pi_1(T_j)$.

    Set $C = \mathrm{Stab}_{\rho_{\mathfrak{o}}}(x_0)$ and so $\pi_1(T_j) \subset C$. Note that $C$ is proper since dynamic realisations do not have global fixed points. Then we can use \cite[Proposition 2.5]{BC23} to construct a left-ordering $\mathfrak{o}'$ such that $C$ is $\ooo'$-convex and $\mathfrak{o}'|_C = \mathfrak{o}|_C$. Namely, we declare $g<_{\ooo'} h$ if $\rho_{\ooo}(g)(x_0) < \rho_{\ooo}(h)(x_0)$ or $g^{-1}h \in C$ and $g<_{\ooo} h$. It follows that $[L(\ooo'|_{\pi_1(T_j)})]=[\alpha_j]$. Now it remains to verify that the left-ordering $\mathfrak{o}'$ order-detects the tuple $(\emptyset, \emptyset; [\beta_1], \ldots, [\beta_n])$ with slopes $[\beta_i] = [\alpha_i]$ for all $i \in K$. To do this, take an arbitrary $i \in K$ and consider three cases.

    \noindent \textbf{Case 1.} $\pi_1(T_i) \subset C$. In this case $\mathfrak{o}'|_{\pi_1(T_i)} = \mathfrak{o}|_{\pi_1(T_i)}$, so $\mathcal{L}( r_i(\mathfrak{o}')) = [\alpha_i]$.

    \noindent \textbf{Cases 2 and 3.} $\pi_1(T_i) \cap C \cong \mathbb{Z}$, or $\pi_1(T_i) \cap C = \{ 1\}$. In either case, it suffices to show that $(\pi_1(T_i) \cap P(\mathfrak{o}')) \setminus C= Q \setminus C$, where $Q$ is a positive cone in $\pi_1(T_i)$ order-detecting $[\alpha_i]$.

    Since the space of left-orderings $\LO(M)$ is compact, we can find a convergent subsequence $\{\gamma^{n_l}\cdot \ooo\}_{l \in \mathbb{N}}$ of $\{\gamma^k\cdot \ooo\}_{k \in \mathbb{N}}$. Now by \cite[Lemma 3.6]{BC23} we have
    \[ \left(\lim_{l \to \infty} P(\gamma^{n_l} \cdot \mathfrak{o}) \right) \setminus C = P(\mathfrak{o}')\setminus C.
    \]
    Next, note that
    \[\left(\lim_{l \to \infty} P(\gamma^{n_l} \cdot \mathfrak{o}) \right) \setminus C = \lim_{l \to \infty} \left( P(\gamma^{n_l} \cdot \mathfrak{o}) \setminus C \right),
    \]
    and since the restriction map $r_i: \{0,1\}^{\pi_1(M)} \rightarrow \{0,1\}^{\pi_1(T_i)}, P \mapsto P \cap \pi_1(T_i)$ is continuous for all $i$, the limit above gives
    \[ r_i(P(\mathfrak{o}')\setminus C) = \lim_{l \to \infty} r_i(P(\gamma^{n_l} \cdot \mathfrak{o})\setminus C).\] And therefore
    \[ (\pi_1(T_i) \cap P(\mathfrak{o}'))\setminus C = (\lim_{l \to \infty} \pi_1(T_i) \cap P(\gamma^{n_l} \cdot \mathfrak{o}) )\setminus C.\]

    Since $\mathcal{L}(r_i(\gamma^{n_l} \cdot \mathfrak{o}))=[\alpha_i]$ for all $n_l \geq 0$, and there are precisely four (resp. two) positive cones in $\pi_1(T_i)$ corresponding to left-orderings detecting $[\alpha_i]$ when $[\alpha_i]$ is rational (resp. irrational), we can choose a subsequence $\{n_k\}_{k \in \mathbb{N}}$ of $\{n_l\}_{l \in \mathbb{N}}$ such that $P(\gamma^{n_k} \cdot \mathfrak{o}) \cap \pi_1(T_i) = Q$ is constant for all $k$.

    Then the limit becomes
    \[ (\pi_1(T_i) \cap P(\mathfrak{o}'))\setminus C = Q \setminus C,
    \]
    showing that $\ooo'$ order-detects $(\emptyset, \emptyset; [\beta_1], \ldots, [\beta_n])$, where $[\beta_i] = [\alpha_i]$ for all $i \in K$.
\end{proof}

\begin{theorem}
    \label{thm:generalcofinal}
    Suppose that $M$ is a compact, connected, orientable, irreducible $3$-manifold whose boundary consists of incompressible tori $T_1, \dots ,T_n$.
    Suppose that $\ooo$ is a left-ordering of $\pi_1(M)$ order-detecting $(\emptyset, \emptyset; [\alpha_1], \dots, [\alpha_n])$ and that $C$ is an $\ooo$-convex subgroup of $\pi_1(M)$. Let $j\in\{1,\dots,n\}$ be fixed
    and $I\subset \{1,\dots,n\}$. If $\pi_1(T_j) \subset C$ and $\pi_1(T_i) \not \subset C$ for all $i \in I$, then for all $\beta \in \mathcal{S}(T_j)$ there exists a left-ordering $\ooo'$ of $\pi_1(M)$ that order-detects $(\emptyset, \emptyset; [\beta_1], \ldots, [\beta_n])$ where $[\beta_j] = [\beta]$ and $[\beta_i] = [\alpha_i]$ for all $i \in I$.

\end{theorem}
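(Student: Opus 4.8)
The plan is to mimic the dynamic-realisation argument used in Lemma~\ref{lem:Cexists} and in \cite[Lemma 5.9]{BC23}, but now \emph{inside} the convex subgroup $C$ rather than on all of $\pi_1(M)$. Since $C$ is $\ooo$-convex and $\pi_1(T_j)\subset C$, the restriction $\ooo|_C$ is a left-ordering of $C$ whose further restriction to $\pi_1(T_j)$ equals $\ooo|_{\pi_1(T_j)}$, and hence detects the slope $[\alpha_j]$. The key point is that we have complete freedom to modify $\ooo|_C$ on $\pi_1(T_j)$ without disturbing the other peripheral data: because $\pi_1(T_i)\not\subset C$ for $i\in I$, the intersection $\pi_1(T_i)\cap C$ is a proper subgroup of $\pi_1(T_i)$, and the slope $\mathcal{L}(r_i(\ooo))$ is determined by $P(\ooo)\cap\pi_1(T_i)$, a set we intend to leave essentially unchanged outside of $C$.

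Concretely, first I would fix the target slope $[\beta]\in\mathcal{S}(T_j)$ and build a left-ordering $\ooo_C$ of $C$ that agrees with $\ooo|_C$ "away from $\pi_1(T_j)$" but restricts to a left-ordering of $\pi_1(T_j)$ realising $[\beta]$. For the rational case this is the standard construction: pick a primitive $\beta\in\pi_1(T_j)$ representing the slope, let $H=\langle\beta\rangle$, choose a splitting $\pi_1(T_j)=H\oplus H'$, and put a lexicographic ordering on $\pi_1(T_j)$ using the short exact sequence $1\to H\to\pi_1(T_j)\to H'\to 1$ with $H$ convex; then I would need to extend this to a left-ordering of all of $C$ in which $\pi_1(T_j)$ behaves appropriately. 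The cleanest route is: inside $C$, consider whether $\pi_1(T_j)$ is $\ooo|_C$-cofinal. If it is not, then Lemma~\ref{lem:Cexists} (applied with $M$ replaced by the relevant sub-picture, or more directly by rerunning its proof using the dynamic realisation of $\ooo|_C$ on $\mathbb{R}$) hands us a convex subgroup $C'\subsetneq C$ with $\pi_1(T_j)\subset C'$, and we iterate; if $\pi_1(T_j)$ is $\ooo|_C$-cofinal we are in the situation where we can directly apply \cite[Theorem 1.7]{BC23}-style reasoning within $C$. Either way, the existing machinery lets us replace the slope at $T_j$ by an arbitrary $[\beta]$ while leaving the convex-hull structure of $C$ and the restrictions to the $\pi_1(T_i)$, $i\in I$, intact.

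The heart of the argument, and the step I expect to be the main obstacle, is verifying that after the modification we genuinely still order-detect $(\emptyset,\emptyset;[\beta_1],\dots,[\beta_n])$ with $[\beta_i]=[\alpha_i]$ for $i\in I$: that is, checking conditions O1 and O3 of Definition~\ref{def:order-detect} for the new ordering $\ooo'$. Condition O2 is vacuous here since $K=\emptyset$, and O3 is easy because $J=\emptyset$ means we only need \emph{some} $\ooo'$-convex normal subgroup $C''$ with $\pi_1(T_i)\cap C''\leq\langle\alpha_i\rangle$ (rational case) or $\pi_1(T_i)\cap C''=\{1\}$ (irrational case) for all $i$ — and one can take $C''=\{1\}$, which trivially satisfies this. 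The real content is O1: showing $s(\ooo')=([\beta_1],\dots,[\beta_n])$, i.e. that for $i\in I$ the slope at $T_i$ is unchanged. This is where I would reuse the limiting argument from the proof of Lemma~\ref{lem:Cexists}: take the element $\gamma\in\pi_1(T_j)$ that is $\ooo|_C$-cofinal, form the sequence $\gamma^k\cdot\ooo$ in the compact space $\LO(\pi_1(M))$, pass to a convergent subsequence, apply \cite[Lemma 3.6]{BC23} to identify the limit with $\ooo'$ outside $C$, and use continuity of the restriction maps $r_i$ together with the finiteness (four or two) of positive cones detecting a given slope to conclude $P(\ooo')\cap\pi_1(T_i)=Q$ for a fixed positive cone $Q$ detecting $[\alpha_i]$. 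The one subtlety to be careful about is the case $\pi_1(T_i)\subset C$ with $i\in I$: the hypothesis $\pi_1(T_i)\not\subset C$ for $i\in I$ rules this out, so every $i\in I$ falls into the "Cases 2 and 3" analysis of Lemma~\ref{lem:Cexists}, and the limiting argument applies verbatim. Assembling these pieces — the in-$C$ slope replacement, the trivial verification of O2 and O3, and the limit-of-conjugates verification of O1 for indices in $I$ — completes the proof.
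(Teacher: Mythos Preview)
There is a genuine gap in your proposal, precisely at the step you flag as ``the main obstacle'' but then wave away: producing a left-ordering of $C$ whose restriction to $\pi_1(T_j)$ detects the \emph{prescribed} slope $[\beta]$. Neither of your two suggested routes accomplishes this. Iterating Lemma~\ref{lem:Cexists} cannot help, because that lemma \emph{preserves} the slope at $T_j$ (its conclusion reads $[\beta_j]=[\alpha_j]$); no amount of iteration will move it. And invoking ``\cite[Theorem~1.7]{BC23}-style reasoning'' when $\pi_1(T_j)$ is $\ooo|_C$-cofinal is circular: that theorem is exactly what is being generalised here, and its proof (namely \cite[Proposition~5.3]{BC23}) requires a substantial topological input you never mention.

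The paper's proof supplies this missing input explicitly. One interprets $C$ as $\pi_1(W)$ for the covering $W\to M$ with $\pi_1(W)=C$; since $\pi_1(T_j)\subset C$, the torus $T_j$ lifts to a torus $T\subset\partial W$. For all rational slopes $[\gamma]$ outside a nowhere-dense exceptional set, the Dehn filling $W(\gamma)$ is irreducible with nonvanishing first Betti number, hence $\pi_1(W(\gamma))=C/\langle\!\langle\gamma\rangle\!\rangle_C$ is left-orderable. The short exact sequence $1\to\langle\!\langle\gamma\rangle\!\rangle_C\to C\to C/\langle\!\langle\gamma\rangle\!\rangle_C\to 1$ then yields a lexicographic ordering $\ooo_C$ on $C$ with $\langle\gamma\rangle$ convex in $\pi_1(T_j)$, so detecting $[\gamma]$ there. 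One then glues via $P(\ooo_\gamma)=P(\ooo_C)\sqcup(P(\ooo)\setminus C)$ using \cite[Proposition~5.2]{BC23}. The verification that the slope at each $T_i$, $i\in I$, is unchanged is then \emph{direct}---since $\pi_1(T_i)\not\subset C$ and the positive cone agrees with $P(\ooo)$ outside $C$, the line $L(\ooo_\gamma|_{\pi_1(T_i)})$ is forced to equal $L(\ooo|_{\pi_1(T_i)})$---so your proposed limiting argument via $\gamma^k\cdot\ooo$ is unnecessary for these slopes. A limit \emph{is} needed, but only at the very end, to pass from the dense set $\mathcal{S}_{rat}(T_j)\setminus Z^*$ to an arbitrary $[\beta]\in\mathcal{S}(T_j)$.
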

\begin{proof}
    The proof is a slight modification of the proof of \cite[Proposition 5.3]{BC23}. We sketch the proof and its modifications here, but do not repeat all details.

    Firstly, note that $C$ is of infinite index because it is $\ooo$-convex. Let $W\to M$ be a covering such that $\pi_1(W)=C$. Then $W$ is non-compact, and moreover, $T_j\subset \partial M$ lifts to a torus $T\subset \partial W$ since $\pi_1(T_j)\subset C$. Set \[Z=\{[\gamma]\in \mathcal{S}_{rat}(T)\mid \mbox{the inclusion-induced map }H_1(T)\to H_1(W(\gamma)) \mbox{ is zero}\},\] where $W(\gamma)$ is the manifold obtained by the Dehn filling $T$ with slope $\gamma$. Let $Z^*$ be the union of $Z$ and the set of rational slopes $[\gamma]\in \mathcal{S}(T)$ such that $W(\gamma)$ is reducible. Then the series of claims made in the proof of \cite[Proposition 5.3]{BC23} shows that $Z^*$ is a nowhere dense subset of $\mathcal{S}(T)$ and for each $[\gamma]\in \mathcal{S}_{rat}(T)\setminus Z^*$, $\pi_1(W(\gamma))$ is left-orderable.

    For each $[\gamma]\in \mathcal{S}_{rat}(T)\setminus Z^*$, note that $C=\pi_1(W)$ and $C/\inner{\inner{\gamma}}_C=\pi_1(W(\gamma))$. The short exact sequence \[\xymatrix{\{1\}\ar[r]&\inner{\inner{\gamma}}_C\ar[r]& C\ar[r]&C/\inner{\inner{\gamma}}_C\ar[r]&\{1\}}\] gives rise to a lexicographic left-ordering $\ooo_C$ on $C$ for which $\inner{\inner{\gamma}}_C$ is a proper $\ooo_C$-convex subgroup. Since $\inner{\inner{C}}_C\cap \pi_1(T)=\inner{\gamma}$ (see \cite[Proof of Proposition 5.3, Claim 3]{BC23}), $\inner{\gamma}$ is a proper $\ooo_C|_{\pi_1(T_j)}$-convex subgroup of $\pi_1(T_j)\leq C$. It follows from \cite[Proposition 5.2]{BC23} that $P(\ooo_C)\sqcup (P(\ooo)\setminus C)$ is a positive cone of a left-ordering $\ooo_\gamma$ on $\pi_1(M)$. Now it is clear that $[L(\ooo_\gamma|_{\pi_1(T_j)})]=[L(\ooo_C|_{\pi_1(T_j)})]=[\gamma]$ by their constructions. Also note that for $i\in I$, since $\pi_1(T_i) \not \subset C$ and $P(\ooo_\gamma)\setminus C=P(\ooo)\setminus C$, we have $[L(\ooo_\gamma|_{\pi_1(T_i)})] = [L(\ooo|_{\pi_1(T_i)})]$. Hence $\ooo_\gamma$ order-detects $(\emptyset, \emptyset; [\beta_1], \ldots, [\beta_n])$ where $[\beta_j] = [\gamma]$ and $[\beta_i] = [\alpha_i]$ for all $i \in I$. This shows that the conclusion of this theorem holds for all $[\beta] \in \mathcal{S}_{rat}(T_j)\setminus Z^*$.

    It remains to show this theorem for $[\beta]\in \mathcal{S}(T_j)$ with $[\beta] \notin \mathcal{S}_{rat}(M) \setminus Z^*$. Since $\mathcal{S}_{rat}(M) \setminus Z^*$ is dense in $\mathcal{S}(M)$, we can pick a sequence $\{[\gamma_l]\}\subset \mathcal{S}_{rat}(M) \setminus Z^*$ that converges to $[\beta]$. Moreover, since $\LO(M)$ is compact, the sequence $\{\ooo_{\gamma_l}\}$, where each left-ordering $\ooo_{\gamma_l}$ is constructed as in the last paragraph, admits a convergent subsequence $\{\ooo_{\gamma_{l_t}}\}$, say converging to $\ooo_{\beta}$. Since $\ooo_{\gamma_{l_t}}$ order-detects $(\emptyset, \emptyset; [\beta_1], \ldots, [\beta_n])$ where $[\beta_j] = [\gamma_{l_t}]$ and $[\beta_i] = [\alpha_i]$ for all $i \in I$, it follows
    that $\ooo_{\beta}$ order-detects $(\emptyset, \emptyset; [\beta_1], \ldots, [\beta_n])$ where $[\beta_j] = [\beta]$ and $[\beta_i] = [\alpha_i]$ for all $i \in I$.


\end{proof}

The following results will be used to `enlarge' intervals of non-detected slopes in the coming sections.

\begin{theorem}
    \label{thm:2boundarycase}
    Suppose that $L=L_1\cup L_2$ is a hyperbolic two-component link in $S^3$. Denote the link complement by $M$ and suppose that no proper, relatively convex subgroup of $\pi_1(M)$ contains both $\pi_1(T_1)$ and $\pi_1(T_2)$. If $(\emptyset, \{1\}; [\alpha_1], [\alpha_2])$ is order-detected by $\ooo$ and $\pi_1(T_2)$ is $\ooo$-bounded, then $(\emptyset, \emptyset; [\alpha_1], [\beta])$ is order-detected for all $\beta \in \mathcal{S}(T_2)$.
\end{theorem}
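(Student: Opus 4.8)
The plan is to derive the statement by chaining the two structural results just proved, Lemma~\ref{lem:Cexists} and Theorem~\ref{thm:generalcofinal}. Since $L$ is hyperbolic its complement $M$ is irreducible with incompressible torus boundary, so both of those results apply to $M$. The idea is to upgrade the given left-ordering $\ooo$, which order-detects $(\emptyset,\{1\};[\alpha_1],[\alpha_2])$, to a left-ordering $\ooo'$ carrying the same pair of detected slopes but equipped with a proper convex subgroup that contains $\pi_1(T_2)$ and not $\pi_1(T_1)$; Theorem~\ref{thm:generalcofinal} then lets us vary the slope on $T_2$ freely while holding $[\alpha_1]$ fixed.

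First I would apply Lemma~\ref{lem:Cexists} with $j=2$. Its hypotheses hold: $M$ is irreducible, $\ooo$ order-detects $(\emptyset,\{1\};[\alpha_1],[\alpha_2])$ with $\emptyset\subset\{1\}$, and $\pi_1(T_2)$ is $\ooo$-bounded, hence not $\ooo$-cofinal. The lemma then produces a left-ordering $\ooo'$ of $\pi_1(M)$ and a \emph{proper} $\ooo'$-convex subgroup $C\subset\pi_1(M)$ with $\pi_1(T_2)\subset C$, such that $\ooo'$ order-detects $(\emptyset,\emptyset;[\beta_1],[\beta_2])$ where $[\beta_2]=[\alpha_2]$ and $[\beta_1]=[\alpha_1]$ (the latter because $1\in K=\{1\}$). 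In other words, $\ooo'$ order-detects $(\emptyset,\emptyset;[\alpha_1],[\alpha_2])$, and $C$ is a proper, relatively convex subgroup of $\pi_1(M)$ containing $\pi_1(T_2)$.

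Next comes the one genuinely new observation: $\pi_1(T_1)\not\subset C$. If it were contained in $C$, then $C$ would be a proper relatively convex subgroup of $\pi_1(M)$ containing both $\pi_1(T_1)$ and $\pi_1(T_2)$, contradicting the hypothesis on $M$. Now I would invoke Theorem~\ref{thm:generalcofinal} with the left-ordering $\ooo'$, the convex subgroup $C$, the index $j=2$, and $I=\{1\}$: we have $\pi_1(T_2)\subset C$ and $\pi_1(T_1)\not\subset C$, so for every $\beta\in\mathcal{S}(T_2)$ there is a left-ordering of $\pi_1(M)$ that order-detects $(\emptyset,\emptyset;[\beta_1],[\beta_2])$ with $[\beta_2]=[\beta]$ and $[\beta_1]=[\alpha_1]$. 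That is exactly the assertion that $(\emptyset,\emptyset;[\alpha_1],[\beta])$ is order-detected for all $\beta\in\mathcal{S}(T_2)$, which finishes the proof.

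The argument has no hard analytic core to reprove — the dynamical realisation and Dehn-filling work all lives inside Lemma~\ref{lem:Cexists} and Theorem~\ref{thm:generalcofinal}. The step to be most careful with is the bookkeeping in the second paragraph: one must confirm that Lemma~\ref{lem:Cexists}, applied with $j=2$ and $K=\{1\}$, genuinely returns an ordering whose detected slopes on \emph{both} $T_1$ and $T_2$ are still $[\alpha_1]$ and $[\alpha_2]$, so that the hypothesis fed into Theorem~\ref{thm:generalcofinal} really is $(\emptyset,\emptyset;[\alpha_1],[\alpha_2])$, and that the subgroup $C$ it supplies is proper — which is what makes the contradiction with the hypothesis on $M$ legitimate and hence gives $\pi_1(T_1)\not\subset C$.
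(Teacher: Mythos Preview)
Your proposal is correct and follows the same approach as the paper: apply Lemma~\ref{lem:Cexists} with $j=2$ to produce $\ooo'$ and a proper $\ooo'$-convex subgroup $C\supset\pi_1(T_2)$, observe $\pi_1(T_1)\not\subset C$ by the hypothesis on relatively convex subgroups, and then invoke Theorem~\ref{thm:generalcofinal} with $j=2$, $I=\{1\}$. Your bookkeeping is slightly more explicit than the paper's (you track that $[\beta_2]=[\alpha_2]$ as well, which the paper records only as ``some $[\beta']$'' since it is irrelevant for the application of Theorem~\ref{thm:generalcofinal}), but the argument is identical.
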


\begin{proof}
    By Lemma \ref{lem:Cexists}, there is a proper subgroup $C$ and a left-ordering $\ooo'$ of $\pi_1(M)$ such that $C$ is $\ooo'$-convex and $\pi_1(T_2) \subset C$, and $\ooo'$ order-detects $(\emptyset, \emptyset; [\alpha_1], [\beta'])$ for some $[\beta'] \in \mathcal{S}(T_2)$. Note that $\pi_1(T_1) \not\subset C$ by assumption, so we can apply Theorem \ref{thm:generalcofinal} to $\ooo'$, and conclude that for every $\beta \in \mathcal{S}(T_2)$ there exists a left-ordering $\ooo''$ order-detecting $(\emptyset, \emptyset; [\alpha_1], [\beta])$.
\end{proof}

\begin{theorem}\label{thm:loimplieswkdet}
    Suppose that $L=L_1\cup L_2$ is a hyperbolic two-component link in $S^3$ with $M$ being the link complement. For $i=1,2$, let $a_i/b_i$ be rational numbers in lowest terms, and let $\{m_i,l_i\}$ be the peripheral system consisting of a meridian and longitude along $T_i$. Further assume that $\pi_1(T_1) \not\subset \inner{\inner{m_1^{a_1}l_1^{b_1}}}$ and $\pi_1(T_2) \not \subset \inner{\inner{m_2^{a_2}l_2^{b_2}}}$. If $\pi_1(S^3_{a_1/b_1,a_2/b_2}(L))$ is left-orderable, then one of $(\{1 \}, \{1,2\}; [m_1^{a_1}l_1^{b_1}], [m_2^{a_2}l_2^{b_2}])$ or $(\{2 \}, \{1,2\}; [m_1^{a_1}l_1^{b_1}], [m_2^{a_2}l_2^{b_2}])$ is order-detected, and in general, $(\emptyset, \{1,2\}; [m_1^{a_1}l_1^{b_1}], [m_2^{a_2}l_2^{b_2}])$ is always order-detected.
\end{theorem}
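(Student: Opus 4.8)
The plan is to promote a left-ordering of the surgered manifold to a left-ordering of $\pi_1(M)$ equipped with a convex normal subgroup that detects both filling slopes. Write $\alpha_i = m_i^{a_i}l_i^{b_i}$ and set $N = \langle\langle \alpha_1,\alpha_2\rangle\rangle \trianglelefteq \pi_1(M)$, so that $\pi_1(M)/N \cong \pi_1(S^3_{a_1/b_1,a_2/b_2}(L))$ carries a left-ordering $\bar\ooo$ by hypothesis. Essentially all of the content is in choosing the convex subgroup correctly.

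First I would record two inputs. Since $M$ is a hyperbolic link exterior, it is a compact, orientable, irreducible $3$-manifold with non-empty incompressible toral boundary, so $\pi_1(M)$ is locally indicable; hence every one of its subgroups --- in particular $N$ and each $\langle\langle \alpha_i\rangle\rangle$ --- is left-orderable. Also, because $L\subset S^3$, the group $\pi_1(M)$ is the normal closure of the meridians $m_1,m_2$, so no proper normal subgroup of $\pi_1(M)$ contains both of them. Next, choose any left-ordering of $N$ and combine it lexicographically with $\bar\ooo$ along $1\to N\to\pi_1(M)\to\pi_1(M)/N\to 1$ to obtain a left-ordering $\ooo$ of $\pi_1(M)$ for which $N$ is $\ooo$-convex and normal. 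For each $i$, the subgroup $\pi_1(T_i)\cap N$ is $\ooo|_{\pi_1(T_i)}$-convex in $\pi_1(T_i)\cong\ZZ^2$ and contains the primitive element $\alpha_i$, so it is either $\langle\alpha_i\rangle$ or all of $\pi_1(T_i)$; the second possibility says $\pi_1(T_i)\subset N$, and by the meridian observation it cannot occur for both $i$ at once (that would give $N=\pi_1(M)$, contradicting left-orderability of the quotient). After relabelling, $\pi_1(T_1)\cap N = \langle\alpha_1\rangle$.

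In the principal case, where $\pi_1(T_2)\cap N = \langle\alpha_2\rangle$ as well, I would take $C = N$. For each $i$ the subgroup $\langle\alpha_i\rangle = \pi_1(T_i)\cap N$ is then convex in $\pi_1(T_i)$ with respect to $\ooo|_{\pi_1(T_i)}$, and since $N$ is normal the same holds with respect to $(g\cdot\ooo)|_{\pi_1(T_i)}$ for every $g\in\pi_1(M)$; hence $s(g\cdot\ooo) = ([\alpha_1],[\alpha_2])$ for all $g$. This verifies O1, O2 with $K=\{1,2\}$, and O3 with $J=\{1,2\}$, so both of the tuples $(\{1\},\{1,2\};[\alpha_1],[\alpha_2])$ and $(\{2\},\{1,2\};[\alpha_1],[\alpha_2])$ are order-detected, and the final assertion follows from Remark~\ref{rem:sub-detection}.

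The remaining case, $\pi_1(T_2)\subset N$, is the crux and the step I expect to be hardest. Now $C=N$ fails O3 at the index $i=2$, so one must replace it by a smaller $\ooo$-convex normal subgroup that still records the line-slope $[\alpha_2]$ on $T_2$. The natural candidate is $C=\langle\langle \alpha_1\rangle\rangle$: with $M_1 := S^3_{a_1/b_1,*}(L)$ one has $\pi_1(M_1)=\pi_1(M)/\langle\langle \alpha_1\rangle\rangle$ and $\pi_1(M_1)/\langle\langle \bar\alpha_2\rangle\rangle\cong\pi_1(S^3_{a_1/b_1,a_2/b_2}(L))$, and provided $M_1$ is still an irreducible manifold with incompressible boundary $T_2$ (which holds for all but finitely many $a_1/b_1$) the local-indicability argument gives that $\pi_1(M_1)$ is left-orderable, $\pi_1(T_2)\cap\langle\langle \alpha_1\rangle\rangle=\{1\}$, and $\pi_1(T_1)\cap\langle\langle \alpha_1\rangle\rangle=\langle\alpha_1\rangle$. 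The difficulty is that the degenerate hypothesis $\pi_1(T_2)\subset N$ says precisely that the image of $\pi_1(T_2)$ in $\pi_1(M_1)$ lies inside $\langle\langle \bar\alpha_2\rangle\rangle$, which is exactly the configuration excluded by the usual knot-manifold strong-detection hypothesis; one must therefore either rule this out for hyperbolic $M$ --- for instance by showing that a null-homotopic core of a Dehn filling is incompatible with the surgered group being left-orderable once the relevant one-cusped filling $M_1$ is hyperbolic, leaving only finitely many exceptional pairs $(a_1/b_1,a_2/b_2)$ to be treated directly --- or produce a substitute left-ordering of $\pi_1(M_1)$ whose restriction to $T_2$ has line-slope $[\alpha_2]$ by other means, and then lexicographically combine it with a left-ordering of $\langle\langle \alpha_1\rangle\rangle$ to get $\ooo$ with $C=\langle\langle \alpha_1\rangle\rangle$ satisfying O1--O3 with $J=\{1\}$, $K=\{1,2\}$. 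Carrying this out --- and more generally ensuring that a peripheral torus ``absorbed'' by $N$ nevertheless carries the correct detected slope --- is where hyperbolicity of $L$ and the slope-detection machinery of \cite{BC23} must be used in an essential way, and where I expect the bulk of the effort to lie.
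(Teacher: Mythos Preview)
Your principal case is correct and matches the paper. The gap is in the absorbed-torus case, and it comes from never invoking the hypotheses $\pi_1(T_i)\not\subset\langle\langle m_i^{a_i}l_i^{b_i}\rangle\rangle$; these are in the statement precisely to dissolve the difficulty you flag.

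When $\pi_1(T_2)\subset N$ in your labelling, you reach for the inner normal subgroup $\langle\langle\alpha_1\rangle\rangle$ and then correctly diagnose that nothing forces the restriction to $\pi_1(T_2)$ to have line-slope $[\alpha_2]$. The fix is to use the \emph{other} normal closure, the one associated with the absorbed torus. Take the tower
\[
\langle\langle\alpha_2\rangle\rangle \;\subset\; N \;\subset\; \pi_1(M)
\]
and build a doubly lexicographic ordering: $N/\langle\langle\alpha_2\rangle\rangle$ embeds in $\pi_1(S^3_{*,a_2/b_2}(L))$, which is left-orderable by \cite{BRW2005} since it is irreducible with infinite first homology, and $\pi_1(M)/N$ carries $\bar\ooo$. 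Both $N$ and $\langle\langle\alpha_2\rangle\rangle$ are then $\ooo$-convex and normal. Now the hypothesis $\pi_1(T_2)\not\subset\langle\langle\alpha_2\rangle\rangle$ is exactly what is needed: together with torsion-freeness of $\pi_1(S^3_{*,a_2/b_2}(L))$ it forces $\pi_1(T_2)\cap\langle\langle\alpha_2\rangle\rangle=\langle\alpha_2\rangle$, so this cyclic group is $\ooo$-convex in $\pi_1(T_2)$ and the detected slope there is $[\alpha_2]$. On the other boundary component one simply observes $\pi_1(T_1)\cap\langle\langle\alpha_2\rangle\rangle\subset\pi_1(T_1)\cap N=\langle\alpha_1\rangle$, so O3 holds with $\langle\langle\alpha_2\rangle\rangle$ as the convex normal subgroup, and O1--O2 follow from normality of both layers. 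No appeal to the dynamical machinery of \cite{BC23}, no hyperbolic Dehn surgery asymptotics, and no exceptional-slope bookkeeping is required.
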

\begin{proof}
    Set $C = \inner{\inner{m_1^{a_1}l_1^{b_1}, m_2^{a_2}l_2^{b_2}}}$. Note that $\pi_1(S^3_{a_1/b_1,a_2/b_2}(L))$ is nontrivial since it is left-orderable, which means that we cannot have $\pi_1(T_1), \pi_1(T_2) \subset C$ since $\pi_1(M)$ is generated by the conjugates of $\{m_1, m_2\}$. So there are two cases to consider.

    As a first case, suppose that $\pi_1(T_i) \not \subset C$ for $i=1, 2$. Construct a lexicographic left-ordering $\ooo$ of $\pi_1(M)$ using the short exact sequence
    \[ \{1\} \longrightarrow C \longrightarrow \pi_1(M) \longrightarrow \pi_1(S^3_{a_1/b_1,a_2/b_2}(L)) \longrightarrow \{1\},
    \]
    so that $C$ is $\ooo$-convex. Observe that $C \cap \pi_1(T_i)$ is a proper subgroup of $\pi_1(T_i)$ that contains $m_i^{a_i}l_i^{b_i}$, and the quotient $\pi_1(M)/C$ is torsion-free. This forces $C \cap \pi_1(T_i) = \langle m_i^{a_i}l_i^{b_i} \rangle$ for $i=1,2$.   Note also that because $C$ is normal, $C$ is $g \cdot \ooo$-convex for all $g \in \pi_1(M)$, and therefore $C \cap \pi_1(T_i) = \langle m_i^{a_i}l_i^{b_i} \rangle$ is $(g \cdot \ooo)|_{\pi_1(T_i)}$-convex in $\pi_1(T_i)$ for $i=1,2$.  This means that $s(g \cdot \ooo)=([m_1^{a_1}l_1^{b_1}], [m_2^{a_2}l_2^{b_2}])$ and  thus $(\{1,2\}, \{1,2\}; [m_1^{a_1}l_1^{b_1}], [m_2^{a_2}l_2^{b_2}])$ is order-detected by $\ooo$.



    On the other hand, suppose that one of $\pi_1(T_1), \pi_1(T_2)$ is contained in $C$ and the other is not, say $\pi_1(T_1) \subset C$ and $\pi_1(T_2) \not \subset C$. Construct a lexicographic left-ordering $\ooo'$ of $C$ using the short exact sequence
    \[ \{1\}\longrightarrow \inner{\inner{m_1^{a_1}l_1^{b_1}}} \longrightarrow C \longrightarrow C/\inner{\inner{m_1^{a_1}l_1^{b_1}}} \longrightarrow \{1\}.
    \]
    Note that $\pi_1(T_1) \not \subset \inner{\inner{m_1^{a_1}l_1^{b_1}}}$
    implies $C \neq \inner{\inner{m_1^{a_1}l_1^{b_1}}}$.  Moreover $C/\inner{\inner{m_1^{a_1}l_1^{b_1}}}$ is left-orderable because it is a subgroup of $\pi_1(S^3_{a_1/b_1, *}(L))$, which is left-orderable since $S^3_{a_1/b_1, *}(L)$ is an irreducible manifold with infinite first homology \cite{BRW2005}. Next, consider the short exact sequence
    \[\{1\} \longrightarrow C \longrightarrow \pi_1(M) \longrightarrow \pi_1(S^3_{a_1/b_1,a_2/b_2}(L)) \longrightarrow \{1\}
    \]
    and construct a lexicographic left-ordering $\ooo''$ of $\pi_1(M)$ using $\ooo'$ on the subgroup $C$. By our construction, both $C$ and $\inner{\inner{m_1^{a_1}l_1^{b_1}}}$ are normal and $\ooo''$-convex.  Arguing as above, $\inner{\inner{m_1^{a_1}l_1^{b_1}}}$ is $g \cdot \ooo''$-convex for all $g \in \pi_1(M)$, and so $\inner{\inner{m_1^{a_1}l_1^{b_1}}} \cap \pi_1(T_1) = \langle m_1^{a_1}l_1^{b_1} \rangle$ is $(g \cdot \ooo'')|_{\pi_1(T_1)}$-convex in $\pi_1(T_1)$ (here we use that $\pi_1(T_1) \not \subset \inner{\inner{m_1^{a_1}l_1^{b_1}}}$).  We can similarly analyse the restriction of $g \cdot \ooo''$ to $\pi_1(T_2)$ and conclude that $(\{2 \}, \{1,2\}; [m_1^{a_1}l_1^{b_1}], [m_2^{a_2}l_2^{b_2}])$ is order-detected by $\ooo''$.

    In the case where $\pi_1(T_2) \subset C$ and $\pi_1(T_1) \not \subset C$, we proceed similarly and deduce that $(\{1 \}, \{1,2\}; [m_1^{a_1}l_1^{b_1}], [m_2^{a_2}l_2^{b_2}])$ is $\ooo''$-detected (here we use that $\pi_1(T_2) \not \subset \inner{\inner{m_2^{a_2}l_2^{b_2}}}$).

    Finally, we remark that regardless of cases, $(\emptyset, \{1,2\}; [m_1^{a_1}l_1^{b_1}], [m_2^{a_2}l_2^{b_2}])$ is always order-detected by Remark \ref{rem:sub-detection}.
\end{proof}

\section{Applications to knots and two-component links}
\label{sec:families}
\subsection{An infinite family of links}  

Let $\LLL'=L_0\cup L_1\cup L_2$ be the three-component link as shown in Figure \ref{fig:linklpp}. For $n\in\NN$, denote by $\LLL_n = L_1\cup L_2$ the link as shown in Figure \ref{fig:linklnn}, where the first component $L_1$ is the torus knot $T(2,2n+1)$ and $L_2$ is the unknot. In the Thistlethwaite Link Table and up to mirror images, $\LLL_0$ is $L5a1$ (the Whitehead link); $\LLL_1$ is $L7a3$; $\LLL_2$ is $L9a14$; and $\LLL_3$ is $L11a110$.
\begin{figure}[h]
    \begin{center}
        \includegraphics[width = 4.5cm]{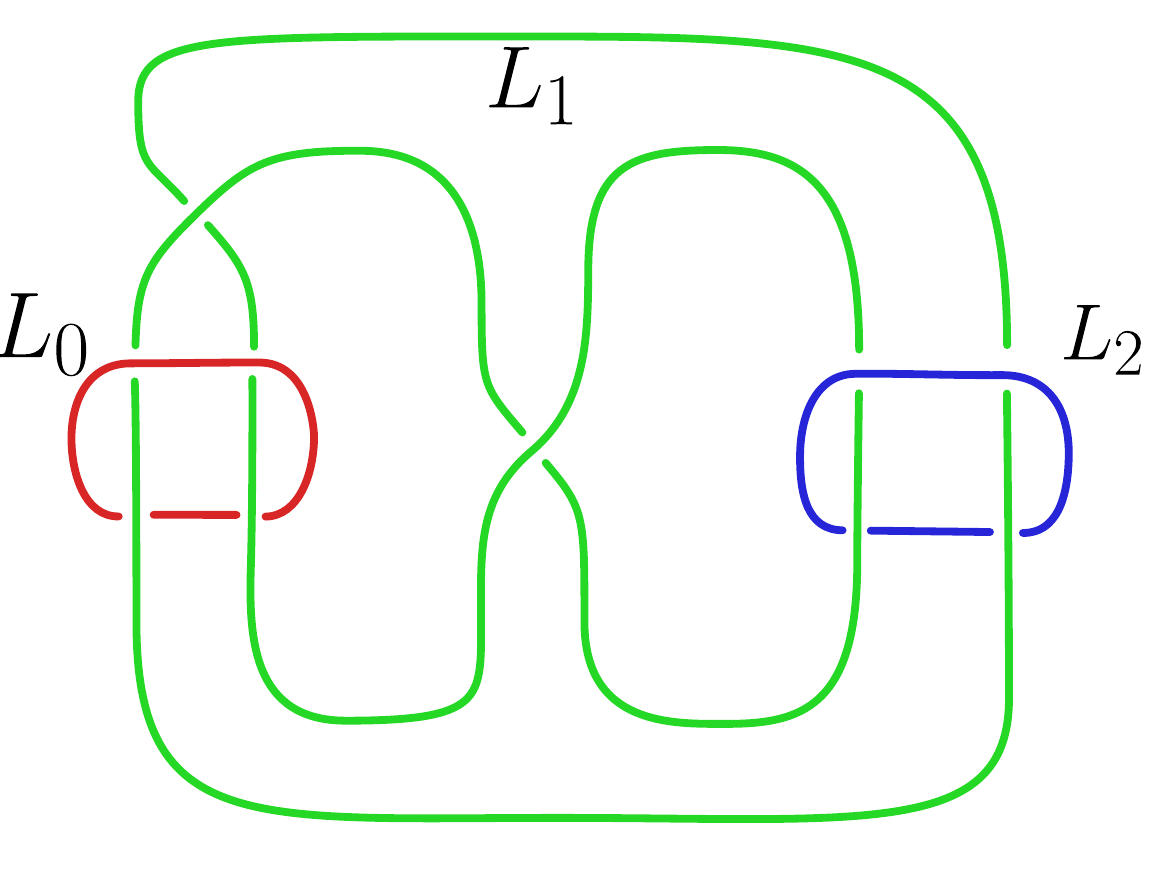}
    \end{center}
    \caption{The three-component link $\LLL'$}
    \label{fig:linklpp}
\end{figure}

The fundamental group of the link complement of $\LLL'$ is given by SnapPy \cite{SnapPy} as \[\Gamma=\inner{x,y,z\mid xz^2y^2x^{-1}z^{-1}y^{-2}z^{-1},\, x^2zy^{-1}z^{-2}x^{-2}yz},\] together with the peripheral systems of $L_0, L_1$ and $L_2$ respectively as \begin{align*}
    m_0 & =z^{-1}x^{-2}y, &  & l_0=zx^2z;                 \\
    m_1 & =z^{-1}x^{-1},  &  & l_1=xy^{-2}z^{-1}(xz)^3;   \\
    m_2 & =xzy,           &  & l_2=yzxy^{-1}z^{-1}x^{-1}.
\end{align*}

From here forward, we denote by $M_n$ the link complement of the link $\LLL_n$. Since $\LLL_n$ can be obtained by a Rolfsen twist along the first component $L_0$ of $\LLL'$, the fundamental group $\pi_1(M_n)$ has the following presentation: \[\pi_1(M_n) = \inner{x,y,z\mid xz^2y^2x^{-1}z^{-1}y^{-2}z^{-1},\, x^2zy^{-1}z^{-2}x^{-2}yz,\, m_0^{-1}l_0^n}.\]
Write $r_1=xz^2y^2x^{-1}z^{-1}y^{-2}z^{-1}$ and $r_2=x^2zy^{-1}z^{-2}x^{-2}yz$. From $m_0^{-1}l_0^n=1$, we see that $y=(x^2z^2)^nx^2z$. Substituting $y=(x^2z^2)^nx^2z$ into $r_2$, we see that $r_2$ is killed automatically. Substituting $y=(x^2z^2)^nx^2z$ into $r_1$, we obtain the following presentation: \[\pi_1(M_n) = \inner{x,z\mid x(z^2x^2)^{n+1}z(x^2z^2)^{n+1}z^{-1}x^{-1}(z^{-2}x^{-2})^{n+1}z^{-1}(x^{-2}z^{-2})^{n+1}z}.\]

Replacing $x$ with $a$ and $z$ with $a^{-2}b$, we obtain the presentation:
\[\pi_1(M_n) = \inner{a,b\mid (a^2b^{-2})^na^2b^{-1}a^{-1}(b^2a^{-2})^nb^3 = b^3(a^{-2}b^2)^na^{-1}b^{-1}a^2(b^{-2}a^2)^n}.\] Under this replacement, the peripheral elements become: \begin{align*}
    m_1 & = b^{-1}a,                & l_1 & = a(b^{-2}a^2)^nb^{-3}(a^2b^{-2})^na^2(a^{-1}b)^3; \\
    m_2 & = a^{-1}b^2(a^{-2}b^2)^n, & l_2 & = b(a^{-2}b^2)^na^{-2}ba(b^{-2}a^2)^nb^{-2}a.
\end{align*} However, our new peripheral systems must take into account the change of framing when performing the Rolfsen twist along $L_0$ of $\LLL'$ (see e.g. \cite[\S 16]{prasolov1997knots}). It follows that the peripheral systems along $L_1$ and $L_2$ in $M_n$ are given respectively by \begin{align*}
    m   & = b^{-1}a,                & l       & = a(b^{-2}a^2)^nb^{-3}(a^2b^{-2})^na^2(a^{-1}b)^{4n+3}; \\
    \mu & = a^{-1}b^2(a^{-2}b^2)^n, & \lambda & = b(a^{-2}b^2)^na^{-2}ba(b^{-2}a^2)^nb^{-2}a,
\end{align*}
where $\{m, l\}$ and $\{\mu, \lambda\}$ serve as generators for the subgroups $\pi_1(T_1)$ and $\pi_1(T_2)$ respectively. Note that for clarity in the arguments that follow, we have renamed the meridian and longitude pairs to avoid subscripts. We note the following formulas, to be used in the computations below: \begin{align*}
    m^{4n+2}l & = a(b^{-2}a^2)^nb^{-3}(a^2b^{-2})^nab,  & \mu\lambda   & = b(a^{-2}b^2)^na^{-2}ba = ba^{-1}\mu m, \\
    m^{4n+3}l & = a(b^{-2}a^2)^nb^{-3}(a^2b^{-2})^na^2, & \mu^2\lambda & = ba^{-2}(b^2a^{-2})^nb^3(a^{-2}b^2)^n.
\end{align*}

\begin{theorem}\label{thm:lnnnotweak}
    If $([\alpha_1],[\alpha_2])\in (4n+2,\infty)\times (2,\infty)\subset \mathcal{S}(M_n)$, then $(\emptyset,\emptyset;[\alpha_1],[\alpha_2])$ is not order-detected.
\end{theorem}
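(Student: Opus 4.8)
The plan is to argue by contradiction: suppose $(\emptyset, \emptyset; [\alpha_1], [\alpha_2])$ is order-detected by some left-ordering $\ooo$ of $\pi_1(M_n)$, with $[\alpha_1] \in (4n+2, \infty)$ and $[\alpha_2] \in (2,\infty)$. The O1 condition says $s(\ooo) = ([\alpha_1],[\alpha_2])$, which means $L(r_1(\ooo))$ and $L(r_2(\ooo))$ are lines of the prescribed slopes in $H_1(T_1;\RR)$ and $H_1(T_2;\RR)$. Translating the interval conditions into statements about which peripheral elements are $\ooo$-positive, I would extract inequalities: since $[\alpha_1] > 4n+2$, the element $m^{4n+2}l$ lies strictly on one side of the line $L(r_1(\ooo))$ (and $m$ on the same side), so $m$ and $m^{4n+2}l$ have the same sign under $\ooo|_{\pi_1(T_1)}$; similarly $[\alpha_2] > 2$ forces $\mu$ and $\mu^2\lambda$ to have the same sign under $\ooo|_{\pi_1(T_2)}$. (Up to replacing $\ooo$ by its reverse, I may assume $m >_\ooo 1$ and $\mu >_\ooo 1$.)

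Next I would exploit the relator and the explicit formulas $m^{4n+2}l = a(b^{-2}a^2)^n b^{-3}(a^2b^{-2})^n ab$, $\mu\lambda = ba^{-1}\mu m$, $\mu^2\lambda = ba^{-2}(b^2a^{-2})^n b^3 (a^{-2}b^2)^n$ to derive a contradiction purely from the positive-cone axioms. The idea is that the single defining relator of $\pi_1(M_n)$, namely $(a^2b^{-2})^na^2b^{-1}a^{-1}(b^2a^{-2})^nb^3 = b^3(a^{-2}b^2)^na^{-1}b^{-1}a^2(b^{-2}a^2)^n$, relates conjugates of $m^{4n+2}l$-type words to conjugates of $(\mu^2\lambda)^{-1}$-type words; rearranging it should show that a product of conjugates of the (assumed positive) elements $m^{4n+2}l$, $m$, $\mu^2\lambda$, $\mu$ equals the identity, contradicting $P(\ooo)\cdot P(\ooo) \subseteq P(\ooo)$ and $1 \notin P(\ooo)$. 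Concretely I would try to manipulate the relator into the form $w_1 w_2 \cdots w_k = 1$ where each $w_i$ is a conjugate $g_i u_i g_i^{-1}$ with $u_i \in \{m, m^{4n+2}l, \mu, \mu^2\lambda\}$ or their inverses, chosen so signs align; the burden is bookkeeping the conjugating elements correctly. I expect the relator, read in the generators $a,b$, to split as $b^3 (a^{-2}b^2)^n a^{-1} \cdots = $ (product involving $\mu^2\lambda = ba^{-2}(b^2a^{-2})^n b^3(a^{-2}b^2)^n$ up to conjugacy/inversion) on one side, balanced against $m^{4n+2}l$ and $m$-words on the other.

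The main obstacle will be finding the right factorisation of the defining relator into conjugates of the four distinguished peripheral words so that the signs are consistent — this is where the specific exponent $4n+2$ (rather than, say, $2n+1$) enters, and why this theorem is weaker than Theorem~\ref{thm:introslopes}. I would guess the correct route is: rewrite the relator using $y = (x^2z^2)^n x^2 z$ and the original generators where the peripheral elements have cleaner expressions, locate copies of $m_0^{-1}l_0^n$, $l_0$, and the meridian/longitude words inside $r_1$, and read off the needed identity; alternatively, work directly with $a,b$ and verify that $(m^{4n+2}l)\cdot(\text{conj of }m)\cdot(\text{conj of }\mu^2\lambda)^{-1}\cdots = 1$ by a direct word computation. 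Once such an identity is in hand, the contradiction is immediate from the semigroup property of $P(\ooo)$: a product of positives (after fixing signs via the two "same-side" observations, possibly conjugating — which is fine since O2 is vacuous here, $K = \emptyset$, but we still need each factor's sign, so really we only use that $m, \mu$ and the two combined slopes-words are $\ooo$-positive and that convex-hull/cofinality arguments are \emph{not} needed) cannot be trivial. I would double-check the edge behaviour — that the \emph{open} intervals $(4n+2,\infty)$ and $(2,\infty)$, not their closures, are what the argument uses — since at the endpoints $m^{4n+2}l$ or $\mu^2\lambda$ would lie \emph{on} the detecting line and could be $\ooo$-trivial, breaking the contradiction.
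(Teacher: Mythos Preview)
Your proposal has a fundamental sign error at the very first step. If $[\alpha_1] \in (4n+2,\infty)$, then the line $L(r_1(\ooo))$ passes through a point proportional to $(p_1,q_1)$ with $p_1/q_1 > 4n+2$. In the $(m,l)$-coordinates, $m = (1,0)$ lies on the side where $q_1 x - p_1 y > 0$, whereas $m^{4n+2}l = (4n+2,1)$ satisfies $q_1(4n+2) - p_1 < 0$ since $p_1 > (4n+2)q_1$. So $m$ and $m^{4n+2}l$ are on \emph{opposite} sides of the detecting line and hence have \emph{opposite} signs under $\ooo$; the same holds for $\mu$ and $\mu^2\lambda$. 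The paper's proof begins precisely from this: after normalising, $m^{4n+2}l <_\ooo 1 <_\ooo m$, and (since $[\alpha_2]>2$) $\mu^2\lambda$ and $\mu$ have opposite signs. Your assumption that all four elements are simultaneously positive is therefore incorrect, and the hoped-for ``product of positive conjugates equals $1$'' contradiction cannot get off the ground. (Note also a secondary problem: even if two peripheral elements \emph{were} each positive in their respective $\pi_1(T_i)$, reversing $\ooo$ flips both signs at once, so you cannot independently normalise $m>_\ooo 1$ and $\mu>_\ooo 1$.)

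Beyond the sign issue, the paper's argument is not a single relator identity but a genuine case analysis. From $m^{4n+2}l <_\ooo 1 <_\ooo m$ one first deduces $a$ and $b$ share a sign, then splits on the sign of $\mu^2\lambda$ and on whether $a,b$ are both positive or both negative. In each subcase one rewrites $\mu^{t+1}\lambda$ or $m^{t+4n+2}l$ (for large $t$) as a product whose factors all have a definite sign, forcing $\mu^N\lambda$ (resp.\ $m^Nl$) to have the wrong sign for all large $N$ --- contradicting $[\alpha_2]\in(2,\infty)$ (resp.\ $[\alpha_1]\in(4n+2,\infty)$). The intermediate deductions about the signs of $a^{-1}b^2$, $ba^{-1}$, $a^{-2}b^2$, etc., are delicate and use the relator in several different rearrangements; there is no single clean identity doing the work.
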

\begin{proof}
    Assume that $([\alpha_1],[\alpha_2])\in (4n+2,\infty)\times (2,\infty)\subset \mathcal{S}(M_n)$, and that $(\emptyset,\emptyset;[\alpha_1],[\alpha_2])$ is order-detected by some left-ordering $\ooo$ of $\pi_1(M_n)$. Then for all sufficiently large integers $N$, $m^{4n+2}l$ and $m^Nl$ are of opposite signs and $\mu^2\lambda$ and $\mu^N\lambda$ are also of opposite signs under $\ooo$. Replacing $\ooo$ with its opposite if necessary, we can further assume $m^{4n+2}l<_\ooo 1<_\ooo m^Nl$ and therefore $m^{4n+2}l<_\ooo 1<_\ooo m$. It follows that $a$ and $b$ are of the same sign, for otherwise $m^{4n+2}l$ and $m$ would have the same sign. We consider cases based on the signs of $a,b$ and $\mu^2\lambda$, and show there is a contradiction in each case.

    \begin{casesp}
        \item $\mu^2\lambda>_\ooo 1$.

        If $\mu>_\ooo 1$, then it follows immediately that $\mu^{t+2}\lambda>_\ooo 1$ for all $t\in \NN$, which is a contradiction. So $\mu<_\ooo 1$ in this case.
        \begin{casesp}
            \item $a,b$ are positive.

            Observe that one of $a^{-1}b^2 >_\ooo 1$ or $ba^{-1} >_\ooo 1$ must hold; for if they are both negative, then the expression
            \begin{align*}
                \mu^2 \lambda & = ba^{-2}(b^2a^{-2})^nb^3(a^{-2}b^2)^n                                                   \\
                              & = (ba^{-1})[(a^{-1}b^2)a^{-1}]^n(a^{-1}b^2)(ba^{-1})[(a^{-1}b^2)a^{-1}]^{n-1}(a^{-1}b^2)
            \end{align*}
            would imply that $\mu^2 \lambda <_\ooo 1$, a contradiction.

            Suppose that $ba^{-1} >_\ooo 1$. We see that \[\mu = a^{-1}b^2(a^{-2}b^2)^n = a^{-1}b^{-1}(a^2b^{-2})^nab(m^{4n+2}l)^{-1}a.\] Hence, for all $2< t\in \NN$, we have \begin{align*}
                (\mu\lambda)\mu^t & = b(a^{-2}b^2)^na^{-2}ba(a^{-1}b^{-1}(a^2b^{-2})^nab(m^{4n+2}l)^{-1}a)^t                             \\
                                  & = b(a^{-2}b^2)^na^{-2}(a^2b^{-2})^nab((m^{4n+2}l)^{-1}b^{-1}(a^2b^{-2})^nab)^{t-1}(m^{4n+2}l)^{-1} a \\
                                  & = ba^{-1}b((m^{4n+2}l)^{-1}b^{-1}(a^2b^{-2})^nab)^{t-1}(m^{4n+2}l)^{-1} a.
            \end{align*}
            But now $b^{-1}(a^2b^{-2})^nab=b((b^{-2}a^2)^nb^{-2}a)b=b\mu^{-1}b$ is positive, as are $ba^{-1}$, $a,b$ and $(m^{4n+2}l)^{-1}$. This implies $\mu^{t+1}\lambda$ is positive for all $2< t\in\NN$, which is a contradiction.

            On the other hand, suppose $a^{-1}b^2 >_\ooo 1$. Making use of the relator, we see that \[\mu = a^{-1}b^2(a^{-2}b^2)^n = a^{-1}b^{-1}(a^2b^{-2})^na^2(m^{4n+2}l)^{-1}a^{-1}ba.\]
            Then for $2 < t \in \NN$ we can write
            \begin{align*}
                (\mu\lambda)\mu^t & = b(a^{-2}b^2)^na^{-2}ba(a^{-1}b^{-1}(a^2b^{-2})^na^2(m^{4n+2}l)^{-1}a^{-1}ba)^t                              \\
                                  & = b(a^{-2}b^2)^na^{-2}(a^2b^{-2})^na^2((m^{4n+2}l)^{-1}a^{-1}(a^2b^{-2})^na^2)^{t-1}(m^{4n+2}l)^{-1} a^{-1}ba \\
                                  & = b((m^{4n+2}l)^{-1}a^{-1}(a^2b^{-2})^na^2)^{t-1}(m^{4n+2}l)^{-1} a^{-1}ba                                    \\
                                  & =b((m^{4n+2}l)^{-1}(a^{-1}b^2)(b^{-2}a^2)^nb^{-2}a^2)^{t-1}(m^{4n+2}l)^{-1} (a^{-1}b^2)(b^{-1}a).
            \end{align*}
            But then $(b^{-2}a^2)^nb^{-2}a^2 = \mu^{-1}a$ is positive, as are $a^{-1}b^2$, $a,b$, $b^{-1}a=m$, and $(m^{4n+2}l)^{-1}$, which implies $\mu^{t+1}\lambda$ is positive for all $2< t\in\NN$ in this case. So we have reached a contradiction in this case.



            \item $a,b$ are negative.

            First suppose that $a^{-2}b^2 <_\ooo 1$. Note that \[m = b^{-1}a = a(\mu\lambda)^{-1}b(a^{-2}b^2)^na^{-1}.\]
            For $2< t\in \NN$, we have \begin{align*}
                m^t(m^{4n+2}l) & = (a(\mu\lambda)^{-1}b(a^{-2}b^2)^na^{-1})^t (a(b^{-2}a^2)^nb^{-3}(a^2b^{-2})^nab)                                   \\
                               & = a((\mu\lambda)^{-1}b(a^{-2}b^2)^n)^{t-1}(\mu\lambda)^{-1}b(a^{-2}b^2)^na^{-1}(a(b^{-2}a^2)^nb^{-3}(a^2b^{-2})^nab) \\
                               & = a((\mu\lambda)^{-1}b(a^{-2}b^2)^n)^{t-1}(\mu\lambda)^{-1}b^{-2}(a^2b^{-2})^nab                                     \\
                               & = a((\mu\lambda)^{-1}b(a^{-2}b^2)^n)^{t-1}(\mu\lambda)^{-1}\mu^{-1}b                                                 \\
                               & = a((\mu\lambda)^{-1}b(a^{-2}b^2)^n)^{t-1}(\mu^2\lambda)^{-1}b.
            \end{align*}
            Since $(a^{-2}b^2)^n <_\ooo 1$ and $a, b, (\mu\lambda)^{-1}, (\mu^2\lambda)^{-1}$ are also negative, $m^t(m^{4n+3}l)$ is negative. But this gives us a contradiction.

            Next suppose that $b^{-2}a^2 <_\ooo 1$. Note that
            \[ m = b^{-1}a = b((b^{-2}a^2)(a^{-1}b))b^{-1}
            \]
            and therefore if $t \geq2$ then
            \begin{align*}
                m^t & = (b((b^{-2}a^2)(a^{-1}b))b^{-1})^t                         \\
                    & = b((b^{-2}a^2)(a^{-1}b))^{t-1}((b^{-2}a^2)(a^{-1}b))b^{-1} \\
                    & = b((b^{-2}a^2)(a^{-1}b))^{t-1}(b^{-2}a^2)a^{-1}.
            \end{align*}
            Now, since $m^{4n+3}l = a(\mu^2 \lambda)^{-1}b$, we can write
            \[m^t m^{4n+3}l = b((b^{-2}a^2)(a^{-1}b))^{t-1}(b^{-2}a^2)(\mu^2 \lambda)^{-1}b.\]
            Since $b, b^{-2}a^2, a^{-1}b$ and $(\mu^2 \lambda)^{-1}$ are negative, $m^t m^{4n+3}l$ is also negative for all $t \geq 2$, a contradiction.

        \end{casesp}
        \item $\mu^2\lambda<_\ooo 1$.

        If $\mu<_\ooo 1$, it follows that $\mu^{t+2}\lambda<_\ooo 1$ for all $t\in \NN$, which is a contradiction. So $\mu>_\ooo 1$ in this case. Since $\mu\lambda = ba^{-1}\mu m <_\ooo 1$ and both $\mu$ and $m$ are positive, $ba^{-1}<_\ooo 1$. Also observe that \begin{align*}
            m\mu m & = (b^{-1}a) (a^{-1}b^2(a^{-2}b^2)^n)(b^{-1}a) \\
                   & = b(a^{-2}b^2)^nb^{-1}a                       \\
                   & = ((ba^{-1})(a^{-1}b))^{n}a.
        \end{align*}
        Since $ba^{-1}<_\ooo 1,a^{-1}b = m^{-1}<_\ooo 1$ and $m\mu m>_\ooo 1$, we must have $a>_\ooo 1$, and therefore $b>_\ooo 1$ as well. Since $((ba^{-1})(a^{-1}b))^{n}a = ((ba^{-1})(a^{-1}b))^{n-1}(ba^{-1})a^{-1}ba>_\ooo 1$, we also conclude that $a^{-1}ba >_\ooo 1$ and so $a^{-1}b^{-1}a <_\ooo 1$.


        Note there is an equality $a^{-1}(m^{4n+3}l)b^{-1}(\mu^2\lambda) = 1$ that can be rewritten as \[(a^{-1}b^{-1}a)(m^{4n+2}l)b^{-1}(\mu^2\lambda) = 1.\] However $a^{-1}b^{-1}a,b^{-1},m^{4n+2}l$ and $\mu^2\lambda$ are negative, while the right-hand side is the identity. This leads to a contradiction.

    \end{casesp}
\end{proof}


\begin{lemma}
    \label{lem:nopropercontainsperipherals}
    No proper subgroup of $\pi_1(M_n)$ contains both $\pi_1(T_1)$ and $\pi_1(T_2)$.
\end{lemma}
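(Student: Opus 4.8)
The plan is to show that the normal closure of $\pi_1(T_2)$ in $\pi_1(M_n)$ is already all of $\pi_1(M_n)$, which suffices since any subgroup containing $\pi_1(T_1)$ and $\pi_1(T_2)$ contains this normal closure once we observe it is the whole group. Actually, the cleanest route uses the standard fact that for a link $L$ in $S^3$, the meridian of any single component normally generates $\pi_1(S^3 \setminus L)$ — this is because $S^3$ is simply connected, so killing all meridians kills $\pi_1$, but in fact killing the meridians of all-but-one component already recovers $\pi_1$ of the complement of that one component, which is a knot complement (or $S^3 \setminus (\text{unknot})$ in many of our cases), and in any case the meridian of a knot in $S^3$ normally generates its group. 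So first I would recall that $\mu = a^{-1}b^2(a^{-2}b^2)^n \in \pi_1(T_2)$ is a meridian of the unknotted component $L_2$ of $\LLL_n$, hence $\langle\langle \mu \rangle\rangle = \pi_1(M_n)$; consequently any subgroup containing $\pi_1(T_2)$, if it is also normal, must be the whole group.

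The subtlety is that a subgroup $H$ containing both $\pi_1(T_1)$ and $\pi_1(T_2)$ need not be normal, so the meridian argument does not immediately apply. To handle this, I would instead argue directly with the presentation $\pi_1(M_n) = \langle a, b \mid \cdots \rangle$: it suffices to show $a, b \in H$ whenever $H \supseteq \pi_1(T_1) \cup \pi_1(T_2)$. From the listed peripheral elements we have $m = b^{-1}a \in \pi_1(T_1) \subseteq H$, so it is enough to produce either $a$ or $b$ in $H$. Using $\mu = a^{-1}b^2(a^{-2}b^2)^n \in H$ together with $m = b^{-1}a \in H$, one can solve for a power of $a$ or $b$: for instance, $m^{-1} = a^{-1}b$, so $m^{-1}\mu^{-1}$ and similar words built from $m, m_1 = m, l, \mu, \lambda$ will, after cancellation, yield $b$ (or $a$) explicitly. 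Concretely I would compute a word in $m, \mu, \lambda$ (and their inverses) that reduces in the free group on $a, b$ to $b^k$ for some $k$; combined with the fact that $\pi_1(M_n)$ has no nontrivial finite quotient detecting proper powers in the relevant way — or more simply, that $b^k \in H$ and $m \in H$ forces, via the structure of $H$ as containing $\pi_1(T_1) \cong \mathbb{Z}^2$, eventually all of $\langle a, b\rangle$.

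A more robust and less computational approach, which I would actually favour writing up: use the fact (Theorem \ref{thm:generalcofinal}-style reasoning, or directly) that any subgroup $H$ with $\pi_1(T_1), \pi_1(T_2) \subseteq H$ surjects, under the map $\pi_1(M_n) \to \pi_1(S^3_{*,\mu}(\LLL_n))$ killing $\langle\langle\mu\rangle\rangle = \pi_1(M_n)$, onto the trivial group only if $H = \pi_1(M_n)$; turning this around, since $S^3_{*,r}(\LLL_n)$ for $r$ the meridian slope of the unknotted component $L_2$ is the solid-torus filling, filling $L_2$ trivially recovers $S^3 \setminus L_1 = S^3 \setminus T(2,2n+1)$, whose fundamental group is normally generated by a meridian $m$. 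Thus $\pi_1(M_n) = \langle\langle m, \mu\rangle\rangle$ where $m \in \pi_1(T_1)$ and $\mu \in \pi_1(T_2)$. Now if $H \supseteq \pi_1(T_1) \cup \pi_1(T_2)$ were proper, pass to the abelianization or a carefully chosen quotient to derive a contradiction — but abelianization alone is too weak, so the real input must be the Wirtinger presentation: every Wirtinger generator is conjugate to some meridian, hence lies in $\langle\langle \pi_1(T_1)\rangle\rangle$, but we need $H$ not merely its normal closure.

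The honest main obstacle is precisely this gap between "normally generated by $\pi_1(T_1) \cup \pi_1(T_2)$" (which is immediate and true) and "generated by" (which is what the lemma asserts). I expect the actual proof handles it by the explicit computation: exhibit $a$ (equivalently $b$) as a word in $m, l, \mu, \lambda$ in the free group, using the relator to simplify. Given the formulas already recorded in the excerpt — especially $m^{4n+2}l = a(b^{-2}a^2)^nb^{-3}(a^2b^{-2})^nab$, $m^{4n+3}l = a(b^{-2}a^2)^nb^{-3}(a^2b^{-2})^na^2$, $\mu\lambda = ba^{-1}\mu m$, and $\mu^2\lambda = ba^{-2}(b^2a^{-2})^nb^3(a^{-2}b^2)^n$ — one sees that $(m^{4n+3}l)(m^{4n+2}l)^{-1} = a(b^{-2}a^2)^nb^{-3}(a^2b^{-2})^na^2 \cdot b^{-1}a^{-1}(b^2a^{-2})^nb^3(a^2b^{-2})^{-n}\cdots$ simplifies, after using the relator that equates two such words, to a short word from which $a$ or $b$ can be extracted. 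So the plan is: (1) note $m = b^{-1}a \in H$; (2) using the relator, rewrite an appropriate combination of the peripheral words $m^{4n+2}l, m^{4n+3}l, \mu, \mu\lambda, \mu^2\lambda$ to obtain, say, $b \in H$; (3) conclude $a = bm \in H$, so $H \supseteq \langle a, b\rangle = \pi_1(M_n)$, contradicting properness. Step (2) is the computational heart and where I would be most careful; the relator is essential, since in the free group $\langle a,b\rangle$ the peripheral subgroups do generate a proper subgroup.
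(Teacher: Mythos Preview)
Your strategy---show directly that $a$ and $b$ lie in any subgroup $H$ containing $\pi_1(T_1)$ and $\pi_1(T_2)$---is exactly the paper's. But you stop short of the computation, and the one concrete step you do propose is vacuous: $(m^{4n+3}l)(m^{4n+2}l)^{-1}$ is simply $m$, since these two elements differ by a left factor of $m$ in the abelian group $\pi_1(T_1)$. Your assertion that ``the relator is essential, since in the free group $\langle a,b\rangle$ the peripheral subgroups do generate a proper subgroup'' is also mistaken; the paper's argument uses only free-group identities, so in fact $m,\mu,\lambda$ already generate the free group on $a,b$.

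The paper's proof is two lines, and the ingredients are formulas you already cite. From $\mu\lambda = ba^{-1}\mu m$ one reads off $(\mu\lambda)\,m^{-1}\mu^{-1} = ba^{-1}$, so $ba^{-1} \in H$. Then the identity $m\mu m = \bigl((ba^{-1})(a^{-1}b)\bigr)^{n} a$, which is a free-group identity already derived in the proof of Theorem~\ref{thm:lnnnotweak}, yields $a \in H$ (since $m,\mu,ba^{-1}$ and $a^{-1}b=m^{-1}$ all lie in $H$); hence $b = am^{-1} \in H$. No appeal to the relator, no root extraction, no passage to quotients---just two rearrangements of formulas already on the page.
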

\begin{proof}
    Suppose $H \subset \pi_1(M_n)$ contains $\pi_1(T_1)$ and $\pi_1(T_2)$. Then $H$ contains $\mu^{-1}m^{-1} \mu \lambda = ba^{-1}$. From the identity $m \mu m = ((ba^{-1})(a^{-1}b))^na$ we see that $a \in H$, from which is follows easily that $b \in H$, so that $H = \pi_1(M_n)$.
\end{proof}

As a result, we have the following corollary.

\begin{corollary}
    \label{cor:bddrangeLn}
    \hangindent\leftmargini
    (1)\hskip\labelsep If $[\alpha_1] \in (4n+2,\infty)$ and $(\emptyset, \{1\}; [\alpha_1], [\alpha_2])$ is order-detected by a left-ordering $\ooo$ of $\pi_1(M_n)$, then $\pi_1(T_2)$ is $\ooo$-cofinal.
    \begin{enumerate}
        \setcounter{enumi}{1}
        \item If $[\alpha_2] \in (2,\infty)$ and $(\emptyset, \{2\}; [\alpha_1], [\alpha_2])$ is order-detected by a left-ordering $\ooo$ of $\pi_1(M_n)$, then $\pi_1(T_1)$ is $\ooo$-cofinal.
    \end{enumerate}

\end{corollary}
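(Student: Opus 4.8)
The plan is to prove both statements by contradiction, bootstrapping the non-detection result of Theorem~\ref{thm:lnnnotweak} across the whole relevant region by means of the cofinality machinery of Section~\ref{sec:cofinal}. Throughout I would use that $\LLL_n$ is hyperbolic, so that $M_n$ is irreducible with incompressible torus boundary and hence Lemma~\ref{lem:Cexists} and Theorem~\ref{thm:generalcofinal} apply.

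For part~(1), I would suppose $[\alpha_1]\in(4n+2,\infty)$, that $\ooo$ order-detects $(\emptyset,\{1\};[\alpha_1],[\alpha_2])$, and, toward a contradiction, that $\pi_1(T_2)$ is not $\ooo$-cofinal. Applying Lemma~\ref{lem:Cexists} with $j=2$ (so $J=\emptyset$, $K=\{1\}$) produces a left-ordering $\ooo'$ of $\pi_1(M_n)$ and a proper $\ooo'$-convex subgroup $C\supset\pi_1(T_2)$ with $\ooo'$ order-detecting $(\emptyset,\emptyset;[\alpha_1],[\alpha_2])$. Since $C$ is proper, Lemma~\ref{lem:nopropercontainsperipherals} forces $\pi_1(T_1)\not\subset C$, so Theorem~\ref{thm:generalcofinal} applies with $j=2$ and $I=\{1\}$ and yields, for every $\beta\in\mathcal{S}(T_2)$, a left-ordering order-detecting $(\emptyset,\emptyset;[\alpha_1],[\beta])$. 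Choosing any $\beta$ with $[\beta]\in(2,\infty)$ then exhibits an order-detected tuple $(\emptyset,\emptyset;[\alpha_1],[\beta])$ with $([\alpha_1],[\beta])\in(4n+2,\infty)\times(2,\infty)$, contradicting Theorem~\ref{thm:lnnnotweak}; hence $\pi_1(T_2)$ is $\ooo$-cofinal. Equivalently, once one records that a non-$\ooo$-cofinal peripheral subgroup is $\ooo$-bounded, part~(1) follows immediately from Theorem~\ref{thm:2boundarycase} --- whose hypothesis on relatively convex subgroups is supplied by Lemma~\ref{lem:nopropercontainsperipherals} --- together with Theorem~\ref{thm:lnnnotweak}.

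Part~(2) is the mirror argument with the two boundary tori interchanged: assuming $[\alpha_2]\in(2,\infty)$, that $\ooo$ order-detects $(\emptyset,\{2\};[\alpha_1],[\alpha_2])$, and that $\pi_1(T_1)$ is not $\ooo$-cofinal, I would apply Lemma~\ref{lem:Cexists} with $j=1$, $K=\{2\}$ to obtain $\ooo'$ and a proper $\ooo'$-convex $C\supset\pi_1(T_1)$ order-detecting $(\emptyset,\emptyset;[\alpha_1],[\alpha_2])$, note $\pi_1(T_2)\not\subset C$ by Lemma~\ref{lem:nopropercontainsperipherals}, apply Theorem~\ref{thm:generalcofinal} with $j=1$, $I=\{2\}$ to obtain for every $\beta\in\mathcal{S}(T_1)$ a left-ordering order-detecting $(\emptyset,\emptyset;[\beta],[\alpha_2])$, and finally choose $\beta$ with $[\beta]\in(4n+2,\infty)$ to contradict Theorem~\ref{thm:lnnnotweak}.

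I do not expect a genuine obstacle here: the corollary is pure assembly of results already in hand. The two points that need attention are (i) bookkeeping the index sets through Lemma~\ref{lem:Cexists} and Theorem~\ref{thm:generalcofinal}, namely that the slope indexed by $K$ (respectively by $I$) is exactly the coordinate that is \emph{not} perturbed, so that it remains inside its prescribed interval; and (ii) remembering that Theorem~\ref{thm:lnnnotweak} requires the first coordinate to lie in $(4n+2,\infty)$ rather than in the wider interval $(2n+2,\infty)$ of Theorem~\ref{thm:introslopes}. This is precisely why the hypotheses of the corollary are stated with the narrower range, and it is also why in part~(2) one perturbs the \emph{first} coordinate up into $(4n+2,\infty)$ while keeping the given $[\alpha_2]\in(2,\infty)$ fixed.
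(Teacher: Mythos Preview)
Your proposal is correct and follows essentially the same route as the paper: contradiction via Lemma~\ref{lem:nopropercontainsperipherals}, the cofinality machinery (Lemma~\ref{lem:Cexists} and Theorem~\ref{thm:generalcofinal}, or equivalently their packaging in Theorem~\ref{thm:2boundarycase}), and then Theorem~\ref{thm:lnnnotweak}. The paper's version is terser, invoking Theorem~\ref{thm:2boundarycase} directly rather than unwinding it, but the argument is the same.
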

\begin{proof}
    We prove only (1), with the argument for (2) being similar.

    Suppose that $[\alpha_1] \in (4n+2,\infty)$ and $(\emptyset, \{1\}; [\alpha_1], [\alpha_2])$ is order-detected by $\ooo$, and that $\pi_1(T_2)$ is $\ooo$-bounded. By Lemma \ref{lem:nopropercontainsperipherals} we may apply Theorem \ref{thm:2boundarycase}, concluding $(\emptyset, \emptyset; [\alpha_1], [\beta])$ is order-detected for all $\beta \in \mathcal{S}(T_2)$. But when $[\beta] \in (2, \infty)$, we know that $(\emptyset, \emptyset; [\alpha_1], [\beta])$ is not order-detected by Theorem \ref{thm:lnnnotweak}, a contradiction.
\end{proof}

\begin{lemma}
    \label{lem:cofinalintervalLn}
    If $[\alpha_1] \in (2n+2,\infty)$ and $(\emptyset, \emptyset; [\alpha_1], [\alpha_2])$ is order-detected by a left-ordering $\ooo$ of $\pi_1(M_n)$, then $\pi_1(T_1)$ is not $\ooo$-cofinal.
\end{lemma}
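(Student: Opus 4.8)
The plan is to argue by contradiction, so suppose $\pi_1(T_1)$ is $\ooo$-cofinal. The first step is to reduce this to a statement about the single element $m = b^{-1}a$. Since $[\alpha_1]$ lies in the open interval $(2n+2,\infty)$ it is not the meridian slope $[m]$, and consequently $m$ is cofinal in $\pi_1(T_1)$ with respect to $\ooo|_{\pi_1(T_1)}$: when $[\alpha_1]$ is irrational this holds because $\ooo|_{\pi_1(T_1)}$ is Archimedean, and when $[\alpha_1]=p/q$ in lowest terms then $q\neq 0$, so $m$ is not contained in the convex subgroup $L(\ooo|_{\pi_1(T_1)})\cap\pi_1(T_1)=\inner{m^pl^q}$. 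Since $\pi_1(T_1)$ is $\ooo$-cofinal in $\pi_1(M_n)$ and $m$ is $\ooo|_{\pi_1(T_1)}$-cofinal in $\pi_1(T_1)$, it follows that $m$ is $\ooo$-cofinal in $\pi_1(M_n)$; replacing $\ooo$ by its opposite if necessary, we may assume $m>_\ooo 1$.

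Next I would record the two kinds of information now available. Because the line $L(\ooo|_{\pi_1(T_1)})$ has slope $[\alpha_1]\in(2n+2,\infty)$ and $m>_\ooo 1$, the element $m^jl$, which represents the slope $j$ on $T_1$, satisfies $m^jl<_\ooo 1$ for every integer $j\leq 2n+2$ and $m^jl>_\ooo 1$ for every sufficiently large integer $j$. On the other hand, since $m$ is $\ooo$-cofinal, every element of $\pi_1(M_n)$ lies between two powers of $m$ in $\ooo$; in particular this applies to the peripheral elements $\mu,\lambda,\mu\lambda,\mu^2\lambda$ of $T_2$, to $a=bm$ and $b$, and to the various products of these that appear in the relators.

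The heart of the proof is then a case analysis on the signs (with respect to $\ooo$) of $a$, $b$, $\mu$ and $\mu^2\lambda$, following the template of the proof of Theorem~\ref{thm:lnnnotweak}. In each case I would combine the relations $m^{4n+2}l=a(b^{-2}a^2)^nb^{-3}(a^2b^{-2})^nab$, $\ m^{4n+3}l=a(\mu^2\lambda)^{-1}b$, $\ m\mu m=((ba^{-1})(a^{-1}b))^na$ and $\ \mu\lambda=ba^{-1}\mu m$ — grouping factors so that within a group they all share a sign — with the cofinality of $m$ to reach a contradiction: either a word $m^jl$ with $j$ arbitrarily large forced to be negative, or a word $m^jl$ with $j\leq 2n+2$ forced to be positive, or an explicit product of like-signed elements forced to equal the identity. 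The role played in Theorem~\ref{thm:lnnnotweak} by the hypothesis $[\alpha_2]\in(2,\infty)$ — which there controls when $\mu^t\lambda$ changes sign — is here played instead by cofinality of $m$, which lets us compare every $T_2$-element to powers of $m$ and hence to the elements $m^jl$ whose signs are pinned down by the slope condition.

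I expect this case analysis to be the main obstacle, for two reasons. First, cofinality of $m$ is strictly weaker input than the two-sided slope hypothesis on $T_2$ used in Theorem~\ref{thm:lnnnotweak}, so several of the arguments there do not transcribe verbatim, and in each branch one must re-choose the grouping of the relators that renders a word manifestly one-signed. Second, the branch in which $\mu$ and $\mu^2\lambda$ are both large and positive yields no immediate sign contradiction on $T_2$; there I would use the identity $m^{4n+3}l=a(\mu^2\lambda)^{-1}b$ (equivalently $(a^{-1}b^{-1}a)(m^{4n+2}l)b^{-1}(\mu^2\lambda)=1$) together with the cofinality of $m$ to transfer the contradiction onto an inequality purely among powers of $m$ and the elements $m^jl$, where the slope condition from the second paragraph closes the argument.
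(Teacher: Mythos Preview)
Your setup is fine through the reduction to $m>_\ooo 1$ and $m^{2n+2}l<_\ooo 1$, but the strategy you sketch afterwards is the wrong one for this lemma. The paper does \emph{not} run a case analysis on the signs of $a,b,\mu,\mu^2\lambda$; the threshold here is $2n+2$, not $4n+2$, and there is no hypothesis whatsoever on $[\alpha_2]$, so the branches of Theorem~\ref{thm:lnnnotweak} that terminate by forcing $\mu^{t}\lambda$ to have a fixed sign for all large $t$ simply do not yield contradictions here. Your own paragraph already flags this (``cofinality of $m$ is strictly weaker input''), and the proposed workaround---transferring via cofinality to inequalities among powers of $m$---does not supply the sign information needed to close the cases, since bounding an element between powers of $m$ says nothing about its sign.

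The idea you are missing is that the positive $\ooo$-cofinal elements of $\pi_1(M_n)$ form a subsemigroup closed under conjugation; see \cite{BC23}. Since $m=b^{-1}a$ is positive and cofinal, so are the conjugates $ab^{-1}=ama^{-1}$ and $b^{-2}ab=b^{-1}mb$, and hence so is any product of conjugates of these. The paper exploits this by writing $m^{2n+2}l=m^{-n}(m^{4n+2}l)m^{-n}$ and using the formula for $m^{4n+2}l$ to obtain
\[
m^{2n+2}l=(a^{-1}b)^n\bigl[(ab^{-1})(b^{-1}a)\bigr]^n\cdot(ab^{-1})\cdot\bigl[(b^{-2}ab)(b^{-1}a)\bigr]^n(b^{-2}ab)(a^{-1}b)^n,
\]
and then checking via telescoping identities such as $\prod_{i=0}^{n-1}(a^{-1}b)^{n-i}(ab^{-1})(b^{-1}a)^{n-i}=(a^{-1}b)^n[(ab^{-1})(b^{-1}a)]^n$ that each of the three displayed blocks is itself a product of conjugates of $m$, hence positive and cofinal. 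This forces $m^{2n+2}l>_\ooo 1$, the desired contradiction---with no case analysis and no reference to the signs of $a,b,\mu,\mu^2\lambda$ at all.
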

\begin{proof}
    Suppose $[\alpha_1] \in (2n+2,\infty)$ and $(\emptyset, \emptyset; [\alpha_1], [\alpha_2])$ is order-detected by $\ooo$, and that $\pi_1(T_1)$ is $\ooo$-cofinal. In particular, $m = b^{-1}a$ is $\ooo$-cofinal, we may assume that $m$ is positive and $m^{2n+2}l$ is negative.

    Using the fact (see \cite{BC23}) that the products and conjugates of positive, $\ooo$-cofinal elements are also positive and $\ooo$-cofinal, we see that $b^{-2}ab=b^{-1}mb$ and $ab^{-1}=ama^{-1}$ are positive and $\ooo$-cofinal. Also note that
    \[ \prod_{i=0}^{n-1}(a^{-1}b)^{n-i}(ab^{-1})(b^{-1}a)^{n-i} = (a^{-1}b)^n[(ab^{-1})(b^{-1}a)]^n\]
    and since the left-hand side is a product of conjugates of $m$, the right-hand side is positive and $\ooo$-cofinal. Similarly, the equality
    \[ \prod_{i=0}^{n-1}(a^{-1}b)^{n-i}(b^{-2}ab)(b^{-1}a)^{n-i} = (a^{-1}b)^n[(b^{-2}ab)(b^{-1}a)]^n\]
    allows us to conclude that the right-hand side is positive and $\ooo$-cofinal. It follows that $[(b^{-2}ab)(b^{-1}a)]^n(b^{-2}ab)(a^{-1}b)^n$ is positive and $\ooo$-cofinal as well, since it is a product of the two positive $\ooo$-cofinal terms
    $$[(b^{-2}ab)(b^{-1}a)]^n(a^{-1}b)^n, \; (a^{-1}b)^{-n}(b^{-2}ab)(a^{-1}b)^n.
    $$
    Now our final observation is that
    \[ m^{4n+2}l = [(ab^{-1})(b^{-1}a)]^n(ab^{-1})[(b^{-2}ab)(b^{-1}a)]^n(b^{-2}ab)\]
    and therefore, adding a power of $m^{-2n}$ to both sides, we get
    \[m^{2n+2}l = (a^{-1}b)^n[(ab^{-1})(b^{-1}a)]^n (ab^{-1})[(b^{-2}ab)(b^{-1}a)]^n(b^{-2}ab)(a^{-1}b)^n.\]
    But this last expression is a product of positive terms, so $m^{2n+2}l$ is positive. This contradicts the fact that $m^{2n+2}l$ is negative.
\end{proof}

Combining Lemma \ref{lem:cofinalintervalLn} and Corollary \ref{cor:bddrangeLn}, we have the following.

\begin{corollary}
    \label{cor:nondetectedLn}
    If $([\alpha_1], [\alpha_2]) \in (2n+2,\infty) \times (2, \infty)$, then $(\emptyset, \{2\}; [\alpha_1], [\alpha_2])$ is not order-detected by a left-ordering $\ooo$ of $\pi_1(M_n)$.
\end{corollary}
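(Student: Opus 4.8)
The plan is to argue by contradiction, assembling the two results just proved together with Remark~\ref{rem:sub-detection}. Suppose $([\alpha_1],[\alpha_2]) \in (2n+2,\infty)\times(2,\infty)$ and that $(\emptyset,\{2\};[\alpha_1],[\alpha_2])$ is order-detected by some left-ordering $\ooo$ of $\pi_1(M_n)$. I would first extract two incompatible facts about the single ordering $\ooo$.

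On one hand, since $[\alpha_2]\in(2,\infty)$ and $(\emptyset,\{2\};[\alpha_1],[\alpha_2])$ is $\ooo$-detected, Corollary~\ref{cor:bddrangeLn}(2) applies directly and tells us that $\pi_1(T_1)$ is $\ooo$-cofinal. On the other hand, taking $J'=K'=\emptyset$ in Remark~\ref{rem:sub-detection} (note $J'\subset J=\emptyset$, $K'\subset K=\{2\}$, $J'\subset K'$ all hold trivially), order-detection of $(\emptyset,\{2\};[\alpha_1],[\alpha_2])$ yields order-detection of $(\emptyset,\emptyset;[\alpha_1],[\alpha_2])$; moreover the same $\ooo$ witnesses this, since passing to a smaller pair $(J',K')$ only weakens conditions O2 and O3 and leaves O1 untouched. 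Since $[\alpha_1]\in(2n+2,\infty)$, Lemma~\ref{lem:cofinalintervalLn} then forces $\pi_1(T_1)$ to fail to be $\ooo$-cofinal. These two conclusions contradict one another, so no such $\ooo$ exists, which is exactly the assertion of the corollary.

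There is essentially no obstacle at this stage: the argument is a purely formal deduction, and all of the substantive work has already been carried out in Lemma~\ref{lem:cofinalintervalLn} (the cofinality argument using the relator identities for $m^{2n+2}l$) and in Corollary~\ref{cor:bddrangeLn} (which in turn relies on Theorem~\ref{thm:lnnnotweak}, Theorem~\ref{thm:2boundarycase}, and Lemma~\ref{lem:nopropercontainsperipherals}). The only point worth stating carefully is that Corollary~\ref{cor:bddrangeLn}(2) and Lemma~\ref{lem:cofinalintervalLn} are being applied to the \emph{same} ordering $\ooo$ and to compatible slope intervals, so that the contradiction is genuine; this is immediate from the hypotheses $[\alpha_1]\in(2n+2,\infty)$ and $[\alpha_2]\in(2,\infty)$.
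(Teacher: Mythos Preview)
Your proof is correct and is exactly the argument the paper has in mind: the paper simply states that the corollary follows by ``combining Lemma~\ref{lem:cofinalintervalLn} and Corollary~\ref{cor:bddrangeLn}'', and your contradiction argument is precisely how one combines them. Your care in noting that the \emph{same} ordering $\ooo$ witnesses $(\emptyset,\emptyset;[\alpha_1],[\alpha_2])$ (so that Lemma~\ref{lem:cofinalintervalLn} and Corollary~\ref{cor:bddrangeLn}(2) genuinely conflict) is the only point requiring comment, and you handle it correctly.
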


\begin{lemma}
    \label{lem:notcontainedLn}
    Suppose that $(a_1/b_1,a_2/b_2)\in (2n+2,\infty)\times (2,\infty)$ are rational numbers written in lowest terms with $a_i, b_i >0$, then $\pi_1(T_1) \not \subset \inner{\inner{m^{a_1}l^{b_1}}}$ and $\pi_1(T_2) \not \subset \inner{\inner{\mu^{a_2}\lambda^{b_2}}}$.
\end{lemma}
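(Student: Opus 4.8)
The plan is to show that if either containment held, then the corresponding Dehn filling would kill too much of the fundamental group, contradicting a homological obstruction. Concretely, suppose $\pi_1(T_1) \subset \inner{\inner{m^{a_1}l^{b_1}}}$. Then the surgered manifold $S^3_{a_1/b_1, *}(\LLL_n)$, whose fundamental group is $\pi_1(M_n)/\inner{\inner{m^{a_1}l^{b_1}}}$, would have the entire image of $\pi_1(T_1)$ trivial; but $\pi_1(M_n)$ is generated by $\pi_1(T_1)$-conjugates only after we account for the second boundary torus, so this alone does not immediately collapse the group. Instead, I would pass to homology. The manifold $S^3_{a_1/b_1, *}(\LLL_n)$ is obtained by Dehn filling one component of a two-component link complement, so $H_1(S^3_{a_1/b_1, *}(\LLL_n); \QQ) \neq 0$ (it has a surviving $\ZZ$ coming from the meridian $\mu$ of the unfilled component, since the linking number data shows $m$ is homologically trivial after filling only when the surgery coefficient is infinite, but in any case $b_2$-type calculations give a one-dimensional rational homology). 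The key point is that $m$ and $l$ are both peripheral, and the image of $\pi_1(T_1)$ in $H_1$ is generated by the classes of $m$ and $l$; killing $\inner{\inner{m^{a_1}l^{b_1}}}$ kills the class $a_1[m] + b_1[l]$ in $H_1$, which is a rank-one condition, so $\pi_1(T_1)$ cannot die in homology, hence cannot be contained in $\inner{\inner{m^{a_1}l^{b_1}}}$.

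More carefully, here is the cleanest route. If $\pi_1(T_1) \subset \inner{\inner{m^{a_1}l^{b_1}}}$, then in particular $m \in \inner{\inner{m^{a_1}l^{b_1}}}$, so in $H_1(M_n; \ZZ)$ we would have $[m]$ lying in the subgroup generated by $[m^{a_1}l^{b_1}] = a_1[m] + b_1[l]$. Now I would compute $H_1(M_n; \ZZ)$ explicitly from the presentation $\pi_1(M_n) = \inner{a, b \mid R}$: abelianising the single relator $R$ gives the trivial relation (one checks the exponent sums of $a$ and of $b$ in $R$ are both zero), so $H_1(M_n; \ZZ) \cong \ZZ^2$, free on $[a]$ and $[b]$. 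Then $[m] = [b^{-1}a] = [a] - [b]$, and I would likewise abelianise the given word for $l$ to get $[l]$ as an explicit integer combination of $[a], [b]$; since $\LLL_n$ has linking number $\pm 1$ (as $L_1 = T(2, 2n+1)$ links the unknot $L_2$ once—wait, here $L_1$ links $L_2$ nontrivially) one expects $[l]$ to be a small multiple of $[\mu]$'s class; the upshot is that $\{[m], [l]\}$ spans a rank-two (or at least the relevant rank) sublattice, so $a_1[m] + b_1[l] \neq 0$ and $[m]$ is not a multiple of it unless $b_1 = 0$, which is excluded since $a_1/b_1$ is finite. This contradiction establishes $\pi_1(T_1) \not\subset \inner{\inner{m^{a_1}l^{b_1}}}$, and the argument for $\pi_1(T_2)$ is identical using $[\mu]$ and $[\lambda]$.

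The main obstacle I anticipate is the bookkeeping of $H_1(M_n)$ and of the classes of the peripheral curves, particularly because the longitudes $l$ and $\lambda$ are written as long words and, crucially, because the Rolfsen twist changed the framing: the longitude recorded as $l = a(b^{-2}a^2)^n b^{-3}(a^2 b^{-2})^n a^2 (a^{-1}b)^{4n+3}$ must be abelianised correctly, and one must be careful that the $(a^{-1}b)^{4n+3}$ factor contributes $(4n+3)([a]-[b])$ to $[l]$. I would double-check the computation against the known case $n = 0$ (the Whitehead link), where $H_1$ and the peripheral classes are well documented, to be sure the framing corrections are handled correctly. Once the homology classes are pinned down, the non-containment is a one-line linear-algebra observation: a cyclic subgroup of $\ZZ^2$ generated by a primitive-up-to-sign vector $a_i[m] + b_i[\lambda\text{-type class}]$ cannot contain the full rank-two image of $\pi_1(T_i)$, and in fact cannot even contain the meridian class alone unless the surgery slope is $\infty$.
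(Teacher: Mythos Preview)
Your homological approach is the right one and is essentially what the paper does, but there is a concrete error in the computation that makes your final step misfire. The linking number of $L_1$ and $L_2$ in $\LLL_n$ is \emph{zero}, not $\pm 1$. You can verify this directly from the presentation: abelianising the relator shows $H_1(M_n;\ZZ)\cong\ZZ[a]\oplus\ZZ[b]$, and abelianising the word for $l$ gives $[l]=0$ (and likewise $[\lambda]=0$). Consequently the image of $\inner{\inner{m^{a_1}l^{b_1}}}$ in $H_1(M_n;\ZZ)$ is generated by $a_1[m]+b_1[l]=a_1[m]$, and your criterion ``$[m]$ is not a multiple of $a_1[m]+b_1[l]$ unless $b_1=0$'' is simply false in this situation: whether $[m]\in\langle a_1[m]\rangle$ has nothing to do with $b_1$. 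The correct obstruction is that $[m]\in\langle a_1[m]\rangle$ forces $a_1=\pm 1$, and the hypothesis $a_1/b_1>2n+2\geq 2$ with $b_1\geq 1$ gives $a_1>2$, so you still get your contradiction. The argument for $\pi_1(T_2)$ is identical using $[\lambda]=0$ and $a_2>2$.

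For comparison, the paper packages the same homological obstruction slightly differently: it fills both components, uses that $H_1(M_n;\ZZ)\cong\ZZ[m]\oplus\ZZ[\mu]$, notes that if $\pi_1(T_1)\subset\inner{\inner{m^{a_1}l^{b_1}}}$ then $H_1(S^3_{a_1/b_1,a_2/b_2}(\LLL_n);\ZZ)$ would be cyclic of order dividing $a_2$, and then invokes the linking-number-zero formula $|H_1|=a_1a_2$ to obtain a contradiction with $a_1>1$. Your version, once corrected, is arguably more direct since it never leaves $H_1(M_n)$; the paper's version has the virtue of making the role of the linking number explicit from the outset.
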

\begin{proof}
    We only argue that $\pi_1(T_1) \not \subset \inner{\inner{m^{a_1}l^{b_1}}}$, the other case being similar. Note that $H_1(M_n ; \mathbb{Z}) \cong \mathbb{Z} \oplus \mathbb{Z}$, with the copies of $\mathbb{Z}$ generated by $m$ and $\mu$. Therefore, if $\pi_1(T_1) \subset \inner{\inner{m^{a_1}l^{b_1}}}$, then the factor generated by $m$ is killed when one appropriately Dehn fills $L_1$, meaning $H_1(S^3_{a_1/b_1, a_2/b_2}(L_n); \mathbb{Z})$ is a cyclic group generated by $\mu$ whose order divides $a_2$. On the other hand, since the linking number of $L_1$ and $L_2$ is zero, by \cite[Proposition 5.3.11]{GS99} we have $ |H_1(S^3_{a_1/b_1, a_2/b_2}(L_n); \mathbb{Z})| = a_1a_2.
    $ This is a contradiction since $a_1, a_2 >1$.
\end{proof}

We are now ready to prove Theorem \ref{thm:introslopes}.
\begin{proof}[Proof of Theorem \ref{thm:introslopes}]
    If $S^3_{r_1,r_2}(\LLL_n)$ is left-orderable for some $(r_1,r_2)\in (2n+2,\infty)\times (2,\infty)$, then by Lemma \ref{lem:notcontainedLn} and Theorem \ref{thm:loimplieswkdet}, we know
    $(\emptyset, \{1,2\}; [m^{a_1}l^{b_1}], [\mu^{a_2}\lambda^{b_2}])$ is order-detected. Applying Remark \ref{rem:sub-detection}, we have a contradiction to Corollary \ref{cor:nondetectedLn}, which finishes the proof.
\end{proof}

\subsection{Whitehead link}
In this section, we denote by $\wh$ the mirror image of the Whitehead link in $S^3$ and by $M$ its link complement. Setting $n=0$ in the presentation for $\pi_1(M_n)$ of the previous section, we have
\[ \pi_1(M) = \inner{a, b \mid a^2b^{-1}a^{-1}b^3 = b^3a^{-1}b^{-1}a^2}, \]
with meridians and longitudes given by
\[m=b^{-1}a,\, l=ab^{-3}a^2(a^{-1}b)^3\,;\quad \mu=a^{-1}b^2,\, \lambda=ba^{-2}bab^{-2}a.\]

We will use the simplicity of the presentation, together with the following theorem, to improve the Dehn filling results of the previous section in the case $n=0$. Given a group $G$ and a non-identity element $g \in G$, in the discussion below we use $N(g)$ to denote the root-closed, conjugacy-closed subsemigroup of $G$ generated by $g$.

\begin{theorem}
    \label{thm:cofinaltrick}
    Suppose that $M$ is a knot manifold with peripheral subgroup $\pi_1(T)$ generated by $\{\mu, \lambda\}$, and there exist coprime integers $p, q >0$ such that $N(\mu^p \lambda^q) \cap N(\mu) \neq \emptyset$. If there exists a slope $[\alpha] \in \mathcal{S}(M)$ that is not weakly order-detected, then no $[\beta] \in (p/q, \infty)$ is weakly order-detected.
\end{theorem}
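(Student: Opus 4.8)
The plan is to exploit the interaction between cofinality of the peripheral subgroup and the existence of an element of $N(\mu^p\lambda^q)$ that also lies in $N(\mu)$. The starting point is the knot-manifold case of the cofinality theorem \cite[Theorem 1.7]{BC23} quoted in the excerpt: if some slope $[\alpha]\in\mathcal{S}(M)$ is not weakly order-detected, then $\pi_1(T)$ is $\ooo$-cofinal for every left-ordering $\ooo$ of $\pi_1(M)$. So I may assume throughout that $\pi_1(T)$ is $\ooo$-cofinal for every $\ooo$, and I want to derive from this that no rational slope in $(p/q,\infty)$ is weakly detected, and then pass to irrational slopes by a density/compactness argument.

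First I would handle a rational slope $[\beta] = [\mu^a\lambda^b]$ with $a/b \in (p/q,\infty)$, $a,b>0$ coprime. Suppose for contradiction that $[\beta]$ is weakly order-detected by some $\ooo$, so $s(\ooo) = [\mu^a\lambda^b]$; after possibly replacing $\ooo$ by its opposite, I may take $\mu^a\lambda^b$ (or a suitable nearby peripheral element) to be the "small" side and conclude that $\mu^a\lambda^b$ is not $\ooo$-cofinal. The key algebraic step is to compare $\mu^p\lambda^q$ with $\mu^a\lambda^b$: since $a/b > p/q$, i.e. $aq > bp$, one has $\mu^a\lambda^b = (\mu^p\lambda^q)^{?}\cdot(\text{positive peripheral correction})$ in a way that shows $\mu^p\lambda^q$ dominates and hence, using that $\pi_1(T)$ is $\ooo$-cofinal while $\mu^a\lambda^b$ is not, forces $\mu^p\lambda^q$ to be non-$\ooo$-cofinal with a definite sign (after the normalization). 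Now invoke the hypothesis $N(\mu^p\lambda^q)\cap N(\mu)\neq\emptyset$: choose $g$ in this intersection. Being in $N(\mu^p\lambda^q)$ means $g$ is a product of conjugates of roots of $\mu^p\lambda^q$; since non-$\ooo$-cofinal positive elements are closed under conjugation, products, and taking roots (roots because $g^k$ non-cofinal forces $g$ non-cofinal, or directly from the convex-hull definition), $g$ is positive and non-$\ooo$-cofinal. But $g\in N(\mu)$ means $g$ is likewise a product of conjugates of roots of $\mu$, so the same closure properties running the other direction show $\mu$ itself must be positive and non-$\ooo$-cofinal — contradicting that $\pi_1(T)$ (hence $\mu$, being a primitive generator of an $\ooo$-cofinal $\ZZ^2$) is $\ooo$-cofinal. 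This contradiction establishes the rational case.

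For an irrational slope $[\beta]\in(p/q,\infty)$, I would argue by density and compactness of $\LO(\pi_1(M))$: pick rationals $[\beta_k]\to[\beta]$ with $[\beta_k]\in(p/q,\infty)$; if $[\beta]$ were weakly detected by $\ooo$ with $s(\ooo)=[\beta]$, then by continuity of the slope map $s$ and an approximation argument on left-orderings one produces left-orderings detecting slopes arbitrarily close to $[\beta]$, and in particular rational ones in $(p/q,\infty)$ — contradicting the rational case just proved. (Alternatively, and more cleanly, one notes that the set of weakly detected slopes is closed by compactness of $\LO$ and continuity of $s$, so if no rational slope in the open interval $(p/q,\infty)$ is detected, the closure argument must be handled at the endpoint only; since $(p/q,\infty)$ is open and the rationals are dense, no slope in it is detected.)

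The main obstacle I anticipate is the precise bookkeeping in the comparison step: making rigorous the claim that $a/b > p/q$ together with non-$\ooo$-cofinality of $\mu^a\lambda^b$ forces $\mu^p\lambda^q$ to be non-$\ooo$-cofinal with the correct sign. This requires a careful analysis inside the $\ZZ^2$ peripheral subgroup of which half-planes of $\pi_1(T)$ are "bounded" versus "cofinal" under $\ooo$, using the line $L(\ooo|_{\pi_1(T)})$ and the fact that for a weakly detected rational slope the convex hull of $\langle\mu^a\lambda^b\rangle$ is exactly that cyclic subgroup (or at least that $\mu^a\lambda^b$ sits on the bounded side). Everything downstream of that — the closure properties of the non-cofinal positive semigroup under conjugation, products, and roots, and the symmetric use of $g\in N(\mu^p\lambda^q)\cap N(\mu)$ — is routine once the sign normalization is pinned down correctly.
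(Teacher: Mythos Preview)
Your argument has the cofinality bookkeeping inverted at the crucial step, and this is not a matter of missing details---it makes the proof fail.

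When $s(\ooo)=[\beta]$ with $\beta\in(p/q,\infty)$, the line $L(\ooo|_{\pi_1(T)})$ passes through the origin in the direction corresponding to $\beta$. The lattice points $\mu=(1,0)$ and $\mu^p\lambda^q=(p,q)$ both lie \emph{off} this line (since $\beta\neq\infty$ and $\beta\neq p/q$) and on \emph{opposite} sides (since $p-\beta q<0<1-\beta\cdot 0$). Consequently both $\mu$ and $\mu^p\lambda^q$ are cofinal in $\pi_1(T)$, hence cofinal in $\pi_1(M)$ because $\pi_1(T)$ is $\ooo$-cofinal, and they have opposite signs. Your claim that $\mu^p\lambda^q$ is \emph{non}-$\ooo$-cofinal is therefore false; only the element $\mu^a\lambda^b$ sitting on the line (in the rational case) is non-cofinal, and that element plays no role in the hypothesis $N(\mu^p\lambda^q)\cap N(\mu)\neq\emptyset$.

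The paper's argument is the mirror image of what you attempted: it uses that the set $N_+$ of \emph{positive $\ooo$-cofinal} elements is a root-closed, conjugacy-closed subsemigroup (this is the fact from \cite{BC23}), and likewise $N_-$. Then $\mu^p\lambda^q\in N_+$ forces $N(\mu^p\lambda^q)\subset N_+$, while $\mu\in N_-$ forces $N(\mu)\subset N_-$; since $N_+\cap N_-=\emptyset$, this contradicts the hypothesis. Note this runs \emph{forward} from the generator to the semigroup it generates. Your attempt to run the inclusion backward---deducing properties of $\mu$ from properties of some $g\in N(\mu)$---is not valid: $N(\mu)$ is the semigroup \emph{generated} by $\mu$, so membership of $g$ tells you how $g$ is built from $\mu$, not the reverse. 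Separately, ``positive non-cofinal'' elements do not obviously form a conjugacy-closed semigroup, since conjugates of positive elements need not be positive in a merely left-ordered group. Finally, the rational/irrational case split is unnecessary (the sign-and-cofinality argument above works uniformly for all $\beta\in(p/q,\infty)$), and your density fallback is flawed: the set of weakly detected slopes is closed, but a closed subset of an open interval can certainly contain irrationals while missing every rational.
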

\begin{proof}
    Let $\ooo$ be a left-ordering of $\pi_1(M)$. By \cite[Theorem 1.7]{BC23}, since there exists a slope which is not weakly order-detected, $\pi_1(T)$ is $\ooo$-cofinal. Suppose that $\ooo$ order-detects the slope $[\beta] \in (p/q, \infty)$. Then in particular, $\mu^p \lambda^q$ and $\mu$ are of opposite signs, and each is cofinal in $\pi_1(T)$ and thus cofinal in $\pi_1(M)$ as well. Without loss of generality, we may assume that $\mu^p \lambda^q >_\ooo 1$ and $\mu <_\ooo 1$.

    The set of positive (resp. negative), $\ooo$-cofinal elements form a root-closed, conjugacy-closed subsemigroup of $\pi_1(M)$; see \cite{BC23}. Let us denote this subsemigroup by $N_+$ (resp. $N_-$). Then as $\mu^p \lambda^q$ is positive and cofinal, $N(\mu^p \lambda^q) \subset N_+$ and similarly $N(\mu) \subset N_-$. Yet $N_+ \cap N_- = \emptyset$, while $N(\mu^p \lambda^q) \cap N(\mu)$ is assumed to be nonempty, a contradiction.
\end{proof}

Next we confirm that certain fillings of $M$ satisfy the hypotheses of Theorem \ref{thm:cofinaltrick}.

\begin{lemma}
    \label{lem:whnormalclosure}
    If $M$ denotes the Whitehead link complement with peripheral systems as above, then $N(ml) \cap N(m) \neq \emptyset$.
\end{lemma}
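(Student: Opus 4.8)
The plan is to exhibit an explicit element of $\pi_1(M)$ that simultaneously lies in $N(ml)$ and in $N(m)$, where $N(g)$ denotes the root-closed, conjugacy-closed subsemigroup generated by $g$. Recall from the presentation that (with $n=0$) we have $m = b^{-1}a$, $l = ab^{-3}a^2(a^{-1}b)^3$, $\mu = a^{-1}b^2$, and $\lambda = ba^{-2}bab^{-2}a$, together with the relator $a^2b^{-1}a^{-1}b^3 = b^3a^{-1}b^{-1}a^2$. From the formulas recorded just before Theorem \ref{thm:lnnnotweak} (specialised to $n=0$), one has $m^{2}l = ab^{-3}ab$ and $m^{3}l = ab^{-3}a^2$, and similarly convenient expressions for the $\mu,\lambda$ side; these short words are the raw material for the computation.

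First I would search for a word $w$ that is visibly a product of conjugates (and/or a root) of $ml$, and also visibly a product of conjugates (and/or a root) of $m$, using the relator to bridge the two descriptions. A natural strategy: compute $ml$ as a word in $a,b$, conjugate it by a suitable element so that the relator can be applied to collapse a block of the form $a^2b^{-1}a^{-1}b^3$ (or its inverse) into $b^3a^{-1}b^{-1}a^2$, and observe that the resulting word factors as a product of conjugates of $m = b^{-1}a$. Because $N(m)$ is conjugacy-closed and closed under products, any word that can be written as such a product lies in $N(m)$; because $N(ml)$ is root-closed and conjugacy-closed, it suffices to realise the same word (or a power of it) on the other side. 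So the concrete goal is an identity of the shape
\[
g_1 (ml)^{\pm 1} g_1^{-1} \cdots g_k (ml)^{\pm 1} g_k^{-1} \;=\; h_1 m^{\pm 1} h_1^{-1} \cdots h_r m^{\pm 1} h_r^{-1}
\]
in $\pi_1(M)$, or a version with roots allowed on either side — but note that for the conclusion we only need the subsemigroups (generated under conjugation and roots and products), so in fact any single common element will do, and I would be willing to use, say, that some power $(ml)^N$ equals a product of conjugates of $m$.

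The key steps, in order, are: (1) write $ml$, $m$, and the relator as words in $a, b$ and fix notation; (2) manipulate $ml$ (conjugating and inserting the relator) to produce a word $W$ in which every letter-block is a conjugate of $b^{-1}a$, thereby certifying $W \in N(m)$; (3) independently certify $W \in N(ml)$ — the cleanest route being that $W$ is literally a product of conjugates of $ml$ by construction in step (2), or failing that, that $W$ is a root or conjugate of such a product; (4) conclude $W \in N(ml)\cap N(m)$. I expect step (2) to be the main obstacle: it is the one genuinely creative move, requiring one to guess the right conjugating elements and the right place to apply the relator so that the $a$'s and $b$'s regroup into copies of $b^{-1}a$. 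Once such a $W$ is found the verification is a routine free-group (modulo the relator) calculation. A useful sanity check along the way is abelianisation: in $H_1(M;\ZZ) \cong \ZZ^2$ generated by the classes of $m$ and $\mu$, the element $ml$ and $m$ have the same image (since $l$ and $\lambda$ are nullhomologous), so there is no homological obstruction to a common element, and any candidate identity must balance exponents accordingly.
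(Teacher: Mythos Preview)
Your proposal correctly identifies the shape of the argument---exhibit a single element that is simultaneously a product of conjugates of $ml$ and a product of conjugates of $m$, using the relator to pass between the two descriptions---and this is exactly what the paper does. But your write-up is a plan, not a proof: you explicitly flag step (2), the production of the actual word $W$ and the conjugators, as ``the one genuinely creative move'' and then do not perform it. That step is the entire content of the lemma; the rest is bookkeeping. For the record, the paper takes $W = (a^{-1}b^{-1}a)\,ml\,(a^{-1}ba)\cdot ml$, visibly a product of two conjugates of $ml$, and then computes (using $ml = a^{-1}bab^{-3}ab$ and one application of the relator) that
\[
W \;=\; b^{-3}a\bigl(\,ba^{-1}b^{3}\,(b^{-1}a)\,b^{-3}ab^{-1}\cdot(b^{-1}a)\,\bigr)a^{-1}b^{3},
\]
which is a conjugate of a product of two conjugates of $m=b^{-1}a$, hence lies in $N(m)$. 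Without producing such a $W$, there is no proof.

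One further point: in your displayed template you allow exponents $\pm 1$ on $ml$ and on $m$. Since $N(g)$ is only a subsemigroup (closed under conjugation, products, and roots, but not inverses), you are not entitled to use $(ml)^{-1}$ or $m^{-1}$; the identity you seek must be built from conjugates of $ml$ on one side and conjugates of $m$ on the other, with all exponents positive. Your abelianisation sanity check is fine, but note it gives no information here since it is automatically satisfied.
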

\begin{proof}
    Using $ml = a^{-1}bab^{-3}ab$, one can verify that
    \begin{align*}
        (a^{-1}b^{-1}a ml a^{-1}ba)ml & = (a^{-1}b^{-1}a)(a^{-1}bab^{-3}ab)(a^{-1}ba)(a^{-1}bab^{-3}ab) \\
                                      & = b^{-3}a(ba^{-1}b^2ab^{-3}ab^{-2}a)a^{-1}b^3                   \\
                                      & = b^{-3}a(ba^{-1}b^3(b^{-1}a)b^{-3}ab^{-1}(b^{-1}a))a^{-1}b^3.
    \end{align*}
    Clearly $(a^{-1}b^{-1}a ml a^{-1}ba)ml \in N(ml)$, while the right-hand side of the equation above, being a product of conjugates of $m = b^{-1}a$, lies in $N(m)$.
\end{proof}

\begin{proposition}
    \label{prop:whprop}
    Let $p,q$ be coprime integers with $0<p \leq q$. If $[\beta] \in (3,4) \subset \mathcal{S}(S^3_{*,1+\frac{p}{q}}(\wh))$, then $[\beta]$ is not weakly order-detected.
\end{proposition}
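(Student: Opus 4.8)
The plan is to apply Theorem~\ref{thm:cofinaltrick} to the knot manifold $M' = S^3_{*,1+\frac{p}{q}}(\wh)$, obtained by Dehn filling the first component of $\wh$ along the slope $1+\frac{p}{q}$, leaving the second torus $T_2$ (with peripheral system $\{\mu,\lambda\}$) unfilled. The hypotheses of that theorem require two things: first, that there is some slope in $\mathcal{S}(M')$ which is not weakly order-detected, and second, that there exist coprime integers $p',q'>0$ with $N((\mu)^{p'}(\lambda)^{q'}) \cap N(\mu) \neq \emptyset$, computed inside $\pi_1(M')$. Granting both, Theorem~\ref{thm:cofinaltrick} gives that no slope in $(p'/q',\infty)$ on $T_2$ is weakly order-detected; choosing $p'=3, q'=1$ we would then get that no $[\beta] \in (3,\infty) \supset (3,4)$ is weakly order-detected, which is the claim.

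For the first hypothesis, I would observe that $S^3_{r_1, r_2}(\wh) = M'(r_2)$ for $r_1 = 1+\frac{p}{q}$, and invoke Theorem~\ref{thm:lnnnotweak} with $n=0$: since $1+\frac{p}{q} \in (1,2) \subset (4\cdot 0+2,\infty)$? No---$(4n+2,\infty) = (2,\infty)$ when $n=0$, and $1+p/q \le 2$, so this does not directly apply. Instead I would use the sharper input. Actually the cleanest route: by Theorem~\ref{thm:introslopes} (or rather its refinement), or more to the point, since we are trying to \emph{establish} weak non-detection, I should instead find a single non-weakly-detected slope on $M'$ using the bounded-ness/cofinality machinery. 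The right statement to use is that filling $\wh$ along $r_1 \in (2,\infty)$ on the first component produces a manifold admitting a non-weakly-detected slope; here $r_1 = 1+p/q \le 2$, so I would instead note that for the \emph{mirror} conventions and the computation of $\pi_1(S^3_{*,1+p/q}(\wh))$ one can exhibit directly a slope on $T_2$ that is not weakly detected. Most likely the intended argument: the complement $M'$ is hyperbolic for generic $p/q$, and some earlier lemma (applied to $M_0 = M$) shows a slope in a suitable interval on $T_1$ is not weakly order-detected in $M$; pushing this through the Dehn filling $M \rightsquigarrow M'$ via Theorem~\ref{thm:2boundarycase} or Lemma~\ref{lem:Cexists} shows $M'$ has a non-weakly-detected slope. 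So the first key step is: \textbf{produce, via the two-boundary-component cofinality results of Section~\ref{sec:cofinal} applied to $M$, some slope on $T_2$ that is not weakly order-detected in $\pi_1(M')$.}

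For the second hypothesis, I would use Lemma~\ref{lem:whnormalclosure}, which gives $N(ml) \cap N(m) \neq \emptyset$ in $\pi_1(M)$. Under the Dehn filling $\pi_1(M) \twoheadrightarrow \pi_1(M')$ that kills $m_1^{a_1} l_1^{b_1}$ for the slope $1+\frac{p}{q} = \frac{q+p}{q}$ on $T_1$, the images of $m=m_1$ and $l=l_1$ satisfy: in $\pi_1(M')$ the element $m^{q+p} l^q$ (or its appropriate relabeling under the meridian-longitude conventions of that section) becomes trivial, hence $ml$ becomes conjugate/root-related to a power of $m$; more carefully, since $m$ and $l$ on $T_1$ generate a $\mathbb{Z}^2$ and the filling slope is $1 + p/q$, in the filled manifold $m$ and $l$ satisfy a relation allowing $ml$ to be expressed so that membership $N(ml)\cap N(m)\neq\emptyset$ descends. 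The subtle point is that $N(ml) \cap N(m) \neq \emptyset$ in $\pi_1(M)$ implies the same in the quotient $\pi_1(M')$ (normal closures and root-closures only grow under quotient), so the second hypothesis holds automatically---but we must also translate this into a statement about $\mu, \lambda$, the \emph{unfilled} boundary: here one uses that on $T_1$, after filling with slope $1+p/q$, the residual class of $m$ inside $\pi_1(M')$ together with the fact that $\mu$ (the meridian of $L_2$) is related to $m$ in $H_1$ lets us rewrite $N(m) = N(\mu^{\pm 1} \cdot(\text{stuff}))$; since linking number is zero, $m$ and $\mu^3\lambda$ become comparable. \textbf{The main obstacle is precisely this bookkeeping}: carrying Lemma~\ref{lem:whnormalclosure}'s semigroup membership through the filling $T_1 \mapsto 1+\frac{p}{q}$ and re-expressing it in terms of the surviving peripheral generators $\mu, \lambda$ so that it reads $N(\mu^3\lambda) \cap N(\mu) \neq \emptyset$ in $\pi_1(M')$. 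Once that identity is in place, Theorem~\ref{thm:cofinaltrick} with $(p',q') = (3,1)$ closes the argument, since $(3,4) \subset (3,\infty)$.
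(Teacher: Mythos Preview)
Your approach has a genuine circularity problem. Theorem~\ref{thm:cofinaltrick} has two hypotheses, and the first---that \emph{some} slope in $\mathcal{S}(M')$ is not weakly order-detected---is precisely what Proposition~\ref{prop:whprop} is meant to supply in the paper's logical structure. You try to obtain this hypothesis from Theorem~\ref{thm:lnnnotweak} (the case $n=0$), but as you yourself note, that theorem only controls pairs $([\alpha_1],[\alpha_2]) \in (2,\infty)\times(2,\infty)$, and here the filling slope on $T_2$ is $1+p/q \in (1,2]$, outside that range. Lifting a hypothetical weakly-detected slope on $M'$ back to $\pi_1(M)$ via a lexicographic ordering produces a left-ordering detecting $(\emptyset,\{2\};[\alpha_1],[1+p/q])$, and nothing in Section~\ref{sec:cofinal} or Theorem~\ref{thm:lnnnotweak} rules that out. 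So you have no independent source for the first hypothesis of Theorem~\ref{thm:cofinaltrick}, and the argument does not get off the ground.

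There is also a bookkeeping error: $S^3_{*,1+p/q}(\wh)$ fills $T_2$, not $T_1$, so the surviving peripheral subgroup is generated by $\{m,l\}$, not $\{\mu,\lambda\}$. Lemma~\ref{lem:whnormalclosure} already gives $N(ml)\cap N(m)\neq\emptyset$ in $\pi_1(M)$, which descends to $\pi_1(M')$ directly---no ``translation'' into $\mu,\lambda$ is needed, and your suggested $(p',q')=(3,1)$ is neither what Lemma~\ref{lem:whnormalclosure} provides nor what you would want (it would give the weaker interval $(3,\infty)$ rather than $(1,\infty)$).

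The paper's proof is entirely different: it is a direct sign computation in the presentation
\[
\pi_1(M') = \langle a,b \mid a^2b^{-1}a^{-1}b^3 = b^3a^{-1}b^{-1}a^2,\ (\mu\lambda)^{q-p}(\mu^2\lambda)^p = 1\rangle.
\]
Assuming a left-ordering $\ooo$ weakly detects $[\beta]\in(3,4)$, one has $m^3l$ and $m^4l$ of opposite sign; after normalising to $m^3l>_\ooo 1$, one forces $m<_\ooo 1$, hence $a,b$ have the same sign, and then a case analysis on the sign of $a,b$ (rewriting the relator $(\mu\lambda)^{q-p}(\mu^2\lambda)^p=1$ several ways) produces a contradiction in each case. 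This computation is the base case that makes Theorem~\ref{thm:cofinaltrick} applicable later, in Corollary~\ref{cor:whcor}---not the other way around.
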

\begin{proof}
    The special case $p=q=1$ will occasionally require a slightly different computation. Whenever necessary, we will note this exceptional case.

    Firstly, we note that \[\pi_1(S^3_{*,1+\frac{p}{q}}(\wh)) = \inner{b,a\mid a^2b^{-1}a^{-1}b^3 = b^3a^{-1}b^{-1}a^2, (\mu\lambda)^{q-p} (\mu^2\lambda)^p=1},\] where \[(\mu\lambda)^{q-p} (\mu^2\lambda)^p =(ba^{-2}ba)^{q-p}(ba^{-2}b^3)^p.\]

    Suppose $\ooo$ is a left-ordering of $\pi_1(S^3_{*,1+p/q}(\wh))$ that weakly order-detects some $[\beta]\in (3,4)\subset \mathcal{S}(S^3_{*,1+p/q}(\wh))$. Then $m^3l$ and $m^4l$ are of opposite signs under $\ooo$. Changing to the opposite of $\ooo$ if necessary, we may further assume that $m^3l=ab^{-3}a^2>_\ooo 1.$

    If $m>_\ooo 1$, then it follows immediately that $m^4l>_\ooo 1$, which is a contradiction. So we assume $m=b^{-1}a<_\ooo 1$. From $m=b^{-1}a<_\ooo 1 <_\ooo m^3l=ab^{-3}a^2$, we see that $b$ and $a$ are of the same sign. We consider cases based upon the signs of $a$ and $b$.

    \begin{casesp}
        \item Both $a,b$ are positive.

        Rewrite the relator $(\mu\lambda)^{q-p} (\mu^2\lambda)^p=1$ as \[((ba^{-2}b)a)^{q-p}((ba^{-2}b)b^2)^p=1.\] Since $b,a$ are positive, we have $ba^{-2}b<_\ooo 1$ and so $b^{-1}a^2b^{-1}>_\ooo 1$.

        Now if $a^{-2}b^2>_\ooo 1$ and $p<q$, then we can again rewrite $(\mu\lambda)^{q-p} (\mu^2\lambda)^p=1$ as \[b ((a^{-2}b^2)(b^{-1}ab))^{q-p} (a^{-2}b^2)b (b(a^{-2}b^2) b)^{p-1}=1.\] Since $b$ and $a^{-2}b^2$ are positive, we must have $b^{-1}ab<_\ooo 1$ and then $b^{-1}a^{-1}b>_\ooo 1$ and $b^{-1}a^{-2}b>_\ooo 1$. But now the relator $(\mu\lambda)^{q-p} (\mu^2\lambda)^p=1$ can be rewritten again as \[(b^2(b^{-1}a^{-2}b)a)^{q-p}(b^2(b^{-1}a^{-2}b)b^2)^p=1,\] where all the terms on the left-hand side are positive and the right-hand side is the identity. We arrive at a contradiction. Therefore, we must have $a^{-2}b^2<_\ooo 1$, or equivalently, $b^{-2}a^2>_\ooo 1$. Note that if $p=q=1$ then the relator $b(a^{-2}b^2)b = 1$ implies $b^{-2}a^2>_\ooo 1$ in this case as well.

        It follows immediately that $m^4l =(b^{-1}a^2b^{-1})(b^{-2}a^2)>_\ooo 1$, which is a contradiction.

        \item Both $a,b$ are negative.

        We shall show that this case is not possible as well. The argument begins as in the previous case: Rewrite the relator $(\mu\lambda)^{q-p} (\mu^2\lambda)^p=1$ as \[((ba^{-2}b)a)^{q-p}((ba^{-2}b)b^2)^p=1.\] Since $b,a$ are negative, we have $ba^{-2}b>_\ooo 1$ and so $b^{-1}a^2b^{-1}<_\ooo 1$.

        Now if $a^{-2}b^2<_\ooo 1$ and $p<q$, then we can again rewrite $(\mu\lambda)^{q-p} (\mu^2\lambda)^p=1$ as \[b ((a^{-2}b^2)(b^{-1}ab))^{q-p} (a^{-2}b^2)b (b(a^{-2}b^2) b)^{p-1}=1.\] Now since $b,(a^{-2}b^2)$ are negative, it follows that $b^{-1}ab>_\ooo 1$ and then $b^{-1}a^{-1}b<_\ooo 1$ and $b^{-1}a^{-2}b<_\ooo 1$. But now the relator $(\mu\lambda)^{q-p} (\mu^2\lambda)^p=1$ can be rewritten again as \[(b^2(b^{-1}a^{-2}b)a)^{q-p}(b^2(b^{-1}a^{-2}b)b^2)^p=1,\] where all the terms on the left-hand side are positive and the right-hand side is the identity. We arrive at a contradiction. Hence, we must have $a^{-2}b^2>_\ooo 1$, that is, $b^{-2}a^2<_\ooo 1$. As in the previous case, when $p=q=1$ then the relator $b(a^{-2}b^2)b = 1$ forces $b^{-2}a^2 <_\ooo 1$.

        Since $\mu^3\lambda=ab^{-3}a^2 =(ab^{-1})(b^{-2}a^2)>_\ooo 1$, we must have $ab^{-1}>_\ooo 1$. Rewrite the relator $(\mu\lambda)^{q-p} (\mu^2\lambda)^p=1$ one more time as \[ ((ba^{-1})(a^{-1}b^2)(b^{-1}a))^{q-p}((ba^{-1})(a^{-1}b^2)b)^p =1.\]
        Since $ba^{-1},b^{-1}a$ and $b$ are all negative, we must have $a^{-1}b^2>_\ooo 1$ and so $b^{-2}a<_\ooo 1$. But then \[m^2l=ab^{-3}ab=(ab^{-1})(b^{-2}a)b<_\ooo 1,\] contradicting $m^3l>_\ooo 1$ and $m<_\ooo 1$.
    \end{casesp} \end{proof}

\begin{corollary}
    \label{cor:whcor}
    Suppose $p,q$ are coprime positive integers with $p/q \in (1, \infty)$. If $[\beta] \in (1, \infty) \subset \mathcal{S}(S^3_{*, \frac{p}{q}}(\wh))$, then $[\beta]$ is not weakly order-detected.
\end{corollary}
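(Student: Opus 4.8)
The plan is to deduce the corollary from Theorem~\ref{thm:cofinaltrick} applied to the one-cusped manifold $Y := S^3_{*,p/q}(\wh)$ with the coprime pair $(1,1)$: since the boundary of $Y$ carries the peripheral pair $\{m,l\}$, that theorem reduces the statement to three points, namely (i) $Y$ is a knot manifold, (ii) $N(ml)\cap N(m)\neq\emptyset$ inside $\pi_1(Y)$, and (iii) at least one slope on $\partial Y$ fails to be weakly order-detected. For (i), I would use that $M$ is hyperbolic, so all but finitely many fillings of the $T_2$-cusp are hyperbolic and in particular knot manifolds, while the remaining non-hyperbolic fillings with slope in $(1,\infty)$ are Seifert fibered with incompressible torus boundary by the classification of exceptional Whitehead link fillings; either way $\partial Y$ is a single incompressible torus. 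For (ii), I would push the identity of Lemma~\ref{lem:whnormalclosure} forward along the surjection $\pi_1(M)\twoheadrightarrow\pi_1(Y)$: its image exhibits a single element of $\pi_1(Y)$ that is simultaneously a product of conjugates of $ml$ and a product of conjugates of $m$, and since $l$ and $\lambda$ are nullhomologous (the linking number is zero) the images of $m$ and of $ml$ both have infinite order in $H_1(Y)\cong\ZZ\oplus\ZZ/p$, hence are nontrivial, so $N(ml)$ and $N(m)$ are genuinely defined and meet.

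Point (iii) splits according to whether $p/q$ lies in $(1,2]$ or in $(2,\infty)$. If $p/q\in(1,2]$, then $p/q-1\in(0,1]$, so writing $p/q-1=p'/q'$ in lowest terms gives coprime integers with $0<p'\leq q'$ and $Y=S^3_{*,1+p'/q'}(\wh)$; Proposition~\ref{prop:whprop} then states directly that no slope in $(3,4)\subset\mathcal{S}(Y)$ is weakly order-detected, which supplies the required non-detected slope.

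If instead $p/q\in(2,\infty)$, I would produce a non-detected slope by descending to the two-cusped complement $M$ and invoking Theorem~\ref{thm:lnnnotweak} with $n=0$. Pick the rational slope $[\beta_0]=[m^3l]\in(2,\infty)\subset\mathcal{S}(Y)$ and suppose for contradiction that it is weakly order-detected by some $\ooo\in\LO(\pi_1(Y))$. Since $\pi_1(M)$ is left-orderable (it is the fundamental group of an irreducible $3$-manifold with positive first Betti number, by \cite{BRW2005}), so is the normal subgroup $N:=\langle\langle\mu^p\lambda^q\rangle\rangle$, and the short exact sequence $\{1\}\to N\to\pi_1(M)\to\pi_1(Y)\to\{1\}$ yields a lexicographic left-ordering $\tilde\ooo$ of $\pi_1(M)$ in which $N$ is convex and $\tilde\ooo$ agrees with the pullback of $\ooo$ on all elements with distinct images in $\pi_1(Y)$. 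Because $\partial Y$ is incompressible, $\pi_1(T_1)\cap N=\{1\}$, so $\tilde\ooo|_{\pi_1(T_1)}$ is order-isomorphic to $\ooo$ restricted to $\pi_1(\partial Y)$, and hence $\mathcal{L}(r_1(\tilde\ooo))=[\beta_0]$. On $\pi_1(T_2)$, convexity of $N$ forces $\pi_1(T_2)\cap N$ to be a convex subgroup of $\ZZ^2$ containing the primitive element $\mu^p\lambda^q$; it cannot be all of $\pi_1(T_2)$, for $\pi_1(T_2)\subset\langle\langle\mu^p\lambda^q\rangle\rangle$ would make $\mu$ trivial in $\pi_1(Y)$ and hence of order $1$ rather than order $p\geq 2$ in $H_1(Y)$. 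So $\pi_1(T_2)\cap N=\langle\mu^p\lambda^q\rangle$ and $\mathcal{L}(r_2(\tilde\ooo))=[p/q]$. Thus $s(\tilde\ooo)=([\beta_0],[p/q])\in(2,\infty)\times(2,\infty)$, so $(\emptyset,\emptyset;[\beta_0],[p/q])$ is order-detected (the trivial convex normal subgroup witnesses condition O3), contradicting Theorem~\ref{thm:lnnnotweak}. Hence $[\beta_0]$ is not weakly order-detected in $Y$, so (iii) holds here as well, and combining the two cases with Theorem~\ref{thm:cofinaltrick} finishes the proof.

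I expect the genuine difficulty to be the range $p/q>2$: Proposition~\ref{prop:whprop} only covers filling slopes up to $2$, so for larger slopes the argument must be routed through the two-cusped complement and Theorem~\ref{thm:lnnnotweak}, which forces the lifting construction above and a careful check of its hypotheses — incompressibility of $\partial Y$ (so that $\pi_1(T_1)$ injects) and non-containment of $\pi_1(T_2)$ in the filling normal closure. Verifying that $Y$ is a knot manifold for every $p/q\in(1,\infty)$ is a secondary technical point, settled by hyperbolic Dehn surgery together with the known list of exceptional Whitehead link fillings.
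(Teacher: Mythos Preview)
Your proof is correct and follows essentially the same strategy as the paper's: push Lemma~\ref{lem:whnormalclosure} forward to $\pi_1(Y)$, use Proposition~\ref{prop:whprop} to supply a non-detected slope when $p/q\in(1,2]$, and for $p/q\in(2,\infty)$ lift a hypothetical weakly detected slope in $(2,\infty)$ to a lexicographic ordering of $\pi_1(M)$ detecting a pair in $(2,\infty)\times(2,\infty)$, contradicting Theorem~\ref{thm:lnnnotweak} with $n=0$; then invoke Theorem~\ref{thm:cofinaltrick}. The paper's write-up is terser---it does not spell out the convexity argument on $\pi_1(T_2)$, does not single out a specific slope like your $[\beta_0]=3$, and does not pause to verify that $Y$ is a knot manifold or that $\bar m,\bar m\bar l$ are nontrivial---but the underlying argument is the same.
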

\begin{proof}
    Using $\bar{m}, \bar{l} \in \pi_1(S^3_{*, \frac{p}{q}}(\wh))$ to denote the image of the peripheral elements $m, l \in \pi_1(M)$, Lemma \ref{lem:whnormalclosure} implies that $N(\bar{m}\bar{l}) \cap N(\bar{m}) \neq \emptyset$.

    If $p/q \in (1,2]$, combining this with Proposition \ref{prop:whprop} we may apply Theorem \ref{thm:cofinaltrick} to conclude that no $[\beta] \in (1, \infty)$ is weakly order-detected.

    On the other hand suppose $p/q \in (2, \infty)$ and that $[\alpha] \in (2, \infty) \subset \mathcal{S}(S^3_{*, \frac{p}{q}}(\wh))$ is weakly order-detected. Then we can use the short exact sequence
    \[ \{1\} \longrightarrow \inner{\inner{\mu^p \lambda^q}} \longrightarrow \pi_1(M) \longrightarrow \pi_1(S^3_{*, \frac{p}{q}}(\wh)) \longrightarrow \{1\}\]
    to argue that
    $(\emptyset, \emptyset ; [\alpha], [\mu^p \lambda^q])$ is order-detected, contradicting Theorem \ref{thm:lnnnotweak} in the case $n=0$. We can now use Theorem \ref{thm:cofinaltrick} to conclude that no $[\beta] \in (1, \infty) \subset \mathcal{S}(S^3_{*, \frac{p}{q}}(\wh))$ is weakly order-detected.
\end{proof}

\begin{proposition}
    Suppose $(p_1/q_1, p_2/q_2) \in (1, \infty) \times (1, \infty)$ where $p_i, q_i$ are coprime positive integers. Then $(\{2\}, \{2\}; [m^{p_1}l^{q_2}], [\mu^{p_2}\lambda^{q_2}])$ is not order-detected by a left-ordering $\ooo$ of $\pi_1(M)$.
\end{proposition}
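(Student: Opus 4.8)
The plan is to reduce to the knot manifold $S^3_{*, p_2/q_2}(\wh)$ and appeal to Corollary~\ref{cor:whcor}. Suppose for contradiction that a left-ordering $\ooo$ of $\pi_1(M)$ order-detects $(\{2\}, \{2\}; [m^{p_1}l^{q_1}], [\mu^{p_2}\lambda^{q_2}])$. Condition O3 of Definition~\ref{def:order-detect} supplies an $\ooo$-convex normal subgroup $C_0 \leq \pi_1(M)$ with $\pi_1(T_2) \cap C_0 = \inner{\mu^{p_2}\lambda^{q_2}}$ and $\pi_1(T_1) \cap C_0 \leq \inner{m^{p_1}l^{q_1}}$. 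Since $C_0$ is normal and contains $\mu^{p_2}\lambda^{q_2}$, it contains $N := \inner{\inner{\mu^{p_2}\lambda^{q_2}}}$, and $\pi_1(M)/N = \pi_1(S^3_{*, p_2/q_2}(\wh))$. The idea is to push $\ooo$ down to a left-ordering of $\pi_1(M)/N$ whose slope on the remaining boundary torus $\bar T_1$ equals $p_1/q_1 \in (1,\infty)$, which Corollary~\ref{cor:whcor} rules out.

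First I would gather the facts needed. A homology computation in the style of Lemma~\ref{lem:notcontainedLn} gives $\inner{m^{p_1}l^{q_1}} \cap N = \{1\}$: the components of $\wh$ have linking number zero, so the classes of $l$ and of $\lambda$ vanish in $H_1(M;\ZZ)$ and $H_1(\pi_1(M)/N) \cong \ZZ \oplus \ZZ/p_2$ with the free summand generated by the image of $m$; hence $(m^{p_1}l^{q_1})^k = m^{kp_1}l^{kq_1}$ has nonzero image in the free summand for $k \neq 0$, so no nonzero power of $m^{p_1}l^{q_1}$ lies in $N$. Next, for $p_2/q_2 \in (1,\infty)$ the manifold $S^3_{*, p_2/q_2}(\wh)$ is a knot manifold with left-orderable fundamental group (the first fact is implicit in the statement and proof of Corollary~\ref{cor:whcor}; the second follows from \cite{BRW2005} as in the proof of Theorem~\ref{thm:loimplieswkdet}), so $\bar T_1$ is incompressible, $\pi_1(T_1)$ injects into $\pi_1(M)/N$, and its image $\pi_1(\bar T_1) \cong \ZZ^2$ has meridian--longitude pair the images of $(m, l)$. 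Finally, $\pi_1(T_1) \cap C_0$ is $\ooo|_{\pi_1(T_1)}$-convex and contained in $\inner{m^{p_1}l^{q_1}}$; since by O1 the line $L(\ooo|_{\pi_1(T_1)})$ has slope $p_1/q_1$, the only such subgroups are $\{1\}$ and $\inner{m^{p_1}l^{q_1}}$.

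Now I would reassemble the ordering downstairs. Let $\bar C := C_0/N$, a normal subgroup of $\pi_1(M)/N$; since $C_0$ is $\ooo$-convex and normal in $\pi_1(M)$, $\ooo$ induces a left-ordering $\bar\ooo$ on $\pi_1(M)/C_0 = (\pi_1(M)/N)/\bar C$, and $\bar C$, being a subgroup of the left-orderable group $\pi_1(M)/N$, is left-orderable. The lexicographic construction applied to
\[ \{1\} \longrightarrow \bar C \longrightarrow \pi_1(S^3_{*, p_2/q_2}(\wh)) \longrightarrow \pi_1(M)/C_0 \longrightarrow \{1\} \]
(using $\bar\ooo$ and any left-ordering of $\bar C$) then yields a left-ordering $\tilde\ooo$ of $\pi_1(S^3_{*, p_2/q_2}(\wh))$ for which $\bar C$ is convex and which descends to $\bar\ooo$. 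To finish, I would compute $s(\tilde\ooo)$ on $\bar T_1$. The subgroup $\pi_1(\bar T_1) \cap \bar C$ is the image in $\pi_1(M)/N$ of $\pi_1(T_1) \cap C_0$, which by the previous paragraph is either $\{1\}$ or $\inner{m^{p_1}l^{q_1}}$; in the latter case the image is still infinite cyclic because $\inner{m^{p_1}l^{q_1}} \cap N = \{1\}$. If $\pi_1(\bar T_1) \cap \bar C = \{1\}$, then $\tilde\ooo$ restricts on $\pi_1(\bar T_1)$ to the pullback of $\bar\ooo$ along the injection $\pi_1(\bar T_1) \hookrightarrow \pi_1(M)/C_0$, which, transported through $\pi_1(\bar T_1) \cong \pi_1(T_1)$, is $\ooo|_{\pi_1(T_1)}$, whose line has slope $p_1/q_1$. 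If instead $\pi_1(\bar T_1) \cap \bar C \cong \ZZ$, it is a rank-one $\tilde\ooo$-convex subgroup of $\pi_1(\bar T_1) \cong \ZZ^2$, hence equals $L(\tilde\ooo|_{\pi_1(\bar T_1)}) \cap \ZZ^2$, so again $L(\tilde\ooo|_{\pi_1(\bar T_1)})$ has slope $p_1/q_1$. Either way $s(\tilde\ooo)$ on $\bar T_1$ is $p_1/q_1 \in (1,\infty)$, so this slope is weakly order-detected in $S^3_{*, p_2/q_2}(\wh)$, contradicting Corollary~\ref{cor:whcor}.

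The step most in need of care is the middle one: confirming that, after quotienting by $N$, one genuinely lands on the incompressible torus $\bar T_1$ with its correct peripheral system, and then checking that the reassembled ordering $\tilde\ooo$ detects exactly the slope $p_1/q_1$ and not some other slope. The homological input $\inner{m^{p_1}l^{q_1}} \cap N = \{1\}$, together with the fact that $S^3_{*, p_2/q_2}(\wh)$ is a knot manifold for $p_2/q_2 \in (1,\infty)$, are precisely what make it legitimate to speak of slopes on $\bar T_1$, and hence to invoke Corollary~\ref{cor:whcor}.
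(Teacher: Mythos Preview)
Your proof is correct and follows essentially the same route as the paper: pass to the quotient $\pi_1(M)/\inner{\inner{\mu^{p_2}\lambda^{q_2}}} = \pi_1(S^3_{*,p_2/q_2}(\wh))$, build a lexicographic ordering using the $\ooo$-convex normal subgroup $C_0$ (your $\bar C = C_0/N$ is exactly the paper's $K = \ker\phi$), check that the resulting ordering weakly detects $p_1/q_1$ on the remaining torus, and contradict Corollary~\ref{cor:whcor}. The only difference is packaging: the paper bypasses your homological computation and case split by observing directly that for every $m^r l^s \in \pi_1(T_1)\setminus\inner{m^{p_1}l^{q_1}}$ one has $m^r l^s \notin C_0$, so its sign under the lexicographic ordering downstairs agrees with its sign under $\ooo$, which already pins down the slope.
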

\begin{proof}
    For contradiction, suppose that $(\{2\}, \{2\}; [m^{p_1}l^{q_2}], [\mu^{p_2}, \lambda^{q_2}])$ is order-detected by a left-ordering $\ooo$ of $\pi_1(M)$. Then there exists an $\ooo$-convex normal subgroup $C \subset \pi_1(M)$ such that $C \cap \pi_1(T_2) = \inner{\mu^{p_2} \lambda^{q_2}}$ and $C \cap \pi_1(T) \subset \inner{m^{p_1}l^{q_1}}$. In particular, $\inner{\inner{\mu^{p_2} \lambda^{q_2}}} \subset C$, so there exists a homomorphism
    \[ \phi: \pi_1(S^3_{*, \frac{p_2}{q_2}}(\wh)) \cong \pi_1(M)/\inner{\inner{\mu^{p_2} \lambda^{q_2}}} \longrightarrow \pi_1(M)/C.
    \]

    Set $K = \ker(\phi)$ and consider the short exact sequence
    \[ \{1\} \longrightarrow K \longrightarrow \pi_1(S^3_{*, \frac{p_2}{q_2}}(\wh)) \stackrel{\phi}{\longrightarrow} \pi_1(M)/C \longrightarrow \{1\}.\]
    Equip $\pi_1(M)/C$ with the quotient left-ordering $\ooo'$ defined by $gC <_{\ooo'} hC$ if and only if $g <_{\ooo} h$ whenever $gC \neq hC$, and equip $K$ with any left-ordering whatsoever. Using these left-orderings, construct a lexicographic left-ordering $\ooo''$ of $\pi_1(S^3_{*, \frac{p_2}{q_2}}(\wh))$.

    Note that for all $m^r l^s \in \pi_1(T_1) \setminus \inner{m^{p_1}l^{q_1}}$ we have $ m^r l^s >_{\ooo} 1$ if and only if $m^r l^sC >_{\ooo'} C$. Therefore, if we denote the images of $m, l$ in $\pi_1(S^3_{*, \frac{p_2}{q_2}}(\wh)) $ by $\bar{m}, \bar{l}$, then we have $ \bar{m}^r \bar{l}^s >_{\ooo''} 1$ if and only if $ m^r l^s >_{\ooo} 1$. Thus $\ooo''$ weakly order-detects $[m^{p_1}l^{q_1}] \in \mathcal{S}(S^3_{*, \frac{p_2}{q_2}}(\wh))$. This contradicts Corollary \ref{cor:whcor}.
\end{proof}

We require the next remark for our final proof of this section.

\begin{remark}
    \label{rem:automorphism}
    Note that there is an automorphism $f: \pi_1(M) \rightarrow \pi_1(M)$ given by $f(a) = a^{-1}b^3$ and $f(b) = a^{-1}ba$, and this automorphism satisfies \[f(m) = \mu,\,f(l) = \lambda,\,f(\mu)=m\mbox{ and }f(\lambda)=l.\] Therefore if we let $\sigma:\{1,2\} \rightarrow \{1,2\}$ denote the transposition $\sigma(1)=2$ and $\sigma(2)=1$, then $(J, K; [\alpha_1], [\alpha_2])$ is order-detected if and only if $(\sigma(J), \sigma(K); f_*([\alpha_1]), f_*([\alpha_2]))$ is order-detected. Here we have used $f_*$ to denote the induced map $f_*:\mathcal{S}(T_1) \rightarrow \mathcal{S}(T_2)$.
\end{remark}

\begin{proof}[Proof of Theorem \ref{thm:introwhitehead}]
    Note that $S^3_{1,*}(\wh)$ is the trefoil knot complement, so the conclusion holds true if $r_1=1$ or $r_2=1$, since Dehn fillings of the trefoil yield non-left-orderable fundamental groups when the filling slope is greater than or equal to one.

    Now suppose that $(r_1, r_2) = (p_1/q_1, p_2/q_2) \in (1, \infty) \times (1, \infty)$, where $p_i/q_i$ is written in lowest terms, and that $\pi_1(S^3_{r_1, r_2}(\wh))$ is left-orderable. Then we apply Remark \ref{rem:sub-detection} and Theorem \ref{thm:loimplieswkdet} to conclude that one of $(\{1\}, \{1\}; [m^{p_1} l^{q_1}], [\mu^{p_2} \lambda^{q_2}])$ or $(\{2\}, \{2\}; [m^{p_1} l^{q_1}], [\mu^{p_2} \lambda^{q_2}])$ is order-detected. However, $(\{2\}, \{2\}; [m^{p_1} l^{q_1}], [\mu^{p_2} \lambda^{q_2}])$ is not order-detected as this would contradict Proposition \ref{prop:whprop}.

    On the other hand, if $(\{1\}, \{1\}; [m^{p_1} l^{q_1}], [\mu^{p_2} \lambda^{q_2}])$ is order-detected then, then an application of Remark \ref{rem:automorphism} shows that this case also contradicts Proposition \ref{prop:whprop}.
\end{proof}

\subsection{Cofinal orderings of the figure-eight knot group}

The results of the previous section also carry consequences for the manifolds $S^3_{r, *}(\wh)$ and $S^3_{*, r}(\wh)$. We illustrate these ideas by considering the case of $S^3_{*,-1}(\wh)$, the figure-eight knot complement. We begin by recording a lemma.

\begin{lemma}
    \label{lem:cofinalinterval}
    Let $M$ denote the complement of $\wh$ in $S^3$, with notation as above, and let $\ooo$ be a left-ordering on $\pi_1(M)$.
    \begin{enumerate}
        \item If $[\alpha_1] \in (1,\infty)$ and $(\emptyset, \emptyset; [\alpha_1], [\alpha_2])$ is order-detected by $\ooo$, then $\pi_1(T_1)$ is not $\ooo$-cofinal.
        \item If $[\alpha_2] \in (1,\infty)$ and $(\emptyset, \emptyset; [\alpha_1], [\alpha_2])$ is order-detected by $\ooo$, then $\pi_1(T_2)$ is not $\ooo$-cofinal.
    \end{enumerate}
\end{lemma}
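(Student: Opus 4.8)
The plan is to argue both parts by contradiction, reducing each to the disjointness of the semigroups of positive and negative $\ooo$-cofinal elements together with the normal-closure identity of Lemma~\ref{lem:whnormalclosure}.

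For part (1), suppose $[\alpha_1]\in(1,\infty)$, that $\ooo$ order-detects $(\emptyset,\emptyset;[\alpha_1],[\alpha_2])$, and---for contradiction---that $\pi_1(T_1)$ is $\ooo$-cofinal. Since $\mathcal{L}(r_1(\ooo))=[\alpha_1]$ is a slope strictly between the slope of $ml=m^1l^1$ (which is $1$) and that of $m=m^1l^0$ (which is $\infty$), the elements $ml$ and $m$ lie on opposite sides of the line $L(r_1(\ooo))\subset H_1(T_1;\RR)$, hence are of opposite sign under $\ooo$; moreover, as the detected slope differs from both $1$ and $\infty$, neither $ml$ nor $m$ lies in the $\ooo|_{\pi_1(T_1)}$-convex subgroup of $\pi_1(T_1)$, so both are $\ooo|_{\pi_1(T_1)}$-cofinal. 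Because $\pi_1(T_1)$ is assumed $\ooo$-cofinal, the $\ooo$-convex hull of $\langle m\rangle$ contains $\pi_1(T_1)$ and is therefore all of $\pi_1(M)$, and likewise for $\langle ml\rangle$; thus $m$ and $ml$ are $\ooo$-cofinal in $\pi_1(M)$, with opposite signs. Now recall (from \cite{BC23}, as used in the proof of Theorem~\ref{thm:cofinaltrick}) that the positive $\ooo$-cofinal elements and the negative $\ooo$-cofinal elements form disjoint root-closed, conjugacy-closed subsemigroups $N_+$ and $N_-$ of $\pi_1(M)$. One of $N(m)$, $N(ml)$ then lies in $N_+$ and the other in $N_-$, so $N(m)\cap N(ml)=\emptyset$, contradicting Lemma~\ref{lem:whnormalclosure}.

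For part (2), the automorphism $f$ of Remark~\ref{rem:automorphism}, which interchanges $m\leftrightarrow\mu$ and $l\leftrightarrow\lambda$, carries Lemma~\ref{lem:whnormalclosure} to the identity $N(\mu\lambda)\cap N(\mu)\neq\emptyset$ and sends $\ooo$ to a left-ordering under which $\pi_1(T_2)$ plays the role of $\pi_1(T_1)$ and the slope detected on $T_2$ still lies in $(1,\infty)$; repeating the argument of part (1) with $(\mu,\lambda,T_2)$ in place of $(m,l,T_1)$ then gives the result. The argument presents no real obstacle once Lemma~\ref{lem:whnormalclosure} is available; the only point requiring care is the bookkeeping converting ``$\pi_1(T_1)$ is $\ooo$-cofinal and $[\alpha_1]\in(1,\infty)$'' into the statement that $m$ and $ml$ themselves are $\ooo$-cofinal in $\pi_1(M)$ with opposite signs---that is, checking that the detected slope avoids $1$ and $\infty$ so neither element is absorbed into a convex subgroup, and performing the convex-hull promotion from $\pi_1(T_1)$ up to $\pi_1(M)$.
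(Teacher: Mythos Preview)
Your argument is correct and follows essentially the same route as the paper: for (1) you deduce from $[\alpha_1]\in(1,\infty)$ and the assumed cofinality of $\pi_1(T_1)$ that $m$ and $ml$ are $\ooo$-cofinal with opposite signs, then use the disjointness of $N_+$ and $N_-$ together with Lemma~\ref{lem:whnormalclosure} to obtain a contradiction; for (2) you invoke the automorphism $f$ of Remark~\ref{rem:automorphism}. The paper's proof is simply a terser version of this, compressing the cofinality/semigroup argument into a one-line appeal to Lemma~\ref{lem:whnormalclosure}.
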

\begin{proof}
    To prove (1), suppose $[\alpha_1] \in (1,\infty)$ and $(\emptyset, \emptyset; [\alpha_1], [\alpha_2])$ is order-detected by $\ooo$, and $\pi_1(T_1)$ is $\ooo$-cofinal. By Lemma \ref{lem:whnormalclosure}, $ml$ and $m$ cannot both be cofinal and have opposite signs, so this is a contradiction. To arrive at (2), we apply the automorphism $f: \pi_1(M) \rightarrow \pi_1(M)$ appearing in Remark \ref{rem:automorphism}.
\end{proof}

Recall that if $K$ is the figure-eight knot with $M_K$ being its complement, then
\[ \pi_1(M_K) = \langle x, y \mid xy^{-1}x^{-1}yx=yxy^{-1}x^{-1}y \rangle\]
with meridian and longitude $\mu_K = x$ and $\lambda_K = yx^{-1}y^{-1}x^2y^{-1}x^{-1}y$ that generate the peripheral subgroup $\pi_1(\partial M_K)$. 
There is a quotient homomorphism (arising from Dehn filling the first component of the mirror of the Whitehead link) $\psi : \pi_1(M) \rightarrow \pi_1(M_K)$ determined by
\[\psi(a) = x^2y^{-1}x \mbox{ and } \psi(b) = x^2y^{-1}.\]
One checks that $\psi(m) = \mu_K$ and $\psi(l) = \lambda_K^{-1}$, so that $\psi(\pi_1(T_1)) = \pi_1(\partial M_K)$. One also checks that $\psi(\mu) = xy^{-1}$, so that $\psi(\pi_1(T_2)) = \langle xy^{-1} \rangle$. There exists an outer automorphism $\phi: \pi_1(M_K) \rightarrow \pi_1(M_K)$ determined by
\[ \phi(x) = x \mbox{ and } \phi(y) = x^{-1}yxy^{-1}x,\]
which arises from the fact that the figure-eight knot is amphichiral. We see that $\phi(\mu_K) = \mu_K$, and $\phi(\lambda_K) = \lambda^{-1}_K$, so that $\phi(\pi_1(\partial M_K)) = \pi_1(\partial M_K)$.

\begin{proposition}
    \label{prop:fig8}
    Suppose that $\mathfrak{o}$ is a left-ordering of $\pi_1(M_K)$.
    \begin{enumerate}
        \item If $\pi_1(\partial M_K)$ is $\ooo$-cofinal, then $s(\ooo) \in [-1, 1] \cup \{ \infty \}$.
        \item If $s(\ooo) \in (-\infty, 2) \cup (2, \infty)$, then $\langle xy^{-1} \rangle$ is $\ooo$-cofinal.
    \end{enumerate}
\end{proposition}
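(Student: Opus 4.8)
The plan is to push both statements down from $\pi_1(M_K)$ to the mirror Whitehead complement $M$ along the surjection $\psi\colon\pi_1(M)\twoheadrightarrow\pi_1(M_K)$. Three features of $\psi$ get used throughout: its kernel is $\langle\langle\mu^{-1}\lambda\rangle\rangle$; it restricts to an isomorphism $\pi_1(T_1)\xrightarrow{\sim}\pi_1(\partial M_K)$ which sends the $T_1$-slope $r$ to the $\partial M_K$-slope $-r$ (because $\psi(m)=\mu_K$ while $\psi(l)=\lambda_K^{-1}$); and $\psi(\pi_1(T_2))=\langle xy^{-1}\rangle$. Given a left-ordering $\ooo$ of $\pi_1(M_K)$, let $\ooo_0$ denote the lexicographic left-ordering of $\pi_1(M)$ associated to $\{1\}\to\ker\psi\to\pi_1(M)\to\pi_1(M_K)\to\{1\}$, so that $\ker\psi$ is normal and $\ooo_0$-convex. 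Since $\ker\psi\cap\pi_1(T_2)$ is an $\ooo_0$-convex subgroup of $\pi_1(T_2)\cong\ZZ^2$ containing the primitive element $\mu^{-1}\lambda$ but not all of $\pi_1(T_2)$, it equals $\langle\mu^{-1}\lambda\rangle$, while $\ker\psi\cap\pi_1(T_1)=\{1\}$; it follows that $\ooo_0$ order-detects $(\emptyset,\emptyset;[\gamma],[\mu^{-1}\lambda])$, where $[\gamma]$ is the $T_1$-slope of $\ooo_0$, and by the displayed isomorphism $[\gamma]=[-s(\ooo)]$, while the $T_2$-slope is the slope $-1$.

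\textbf{For (1).} Suppose $\pi_1(\partial M_K)$ is $\ooo$-cofinal. Since $\ooo_0$ restricts on $\pi_1(T_1)$ to the $\psi$-pullback of $\ooo|_{\pi_1(\partial M_K)}$ and $\psi|_{\pi_1(T_1)}$ is an isomorphism, $\pi_1(T_1)$ is $\ooo_0$-cofinal. If $s(\ooo)<-1$ then $[\gamma]=[-s(\ooo)]\in(1,\infty)$, and Lemma \ref{lem:cofinalinterval}(1) forces $\pi_1(T_1)$ to be $\ooo_0$-bounded, a contradiction; hence $s(\ooo)\notin(-\infty,-1)$. Now apply the same argument to $\phi^*\ooo$, using the amphichiral automorphism $\phi$ of $\pi_1(M_K)$: it preserves $\pi_1(\partial M_K)$ (so that subgroup is still cofinal for $\phi^*\ooo$) and acts on $\partial M_K$-slopes by $r\mapsto -r$, so $s(\phi^*\ooo)=-s(\ooo)$; this gives $s(\ooo)\notin(1,\infty)$. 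Therefore $s(\ooo)\in[-1,1]\cup\{\infty\}$.

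\textbf{For (2).} Argue the contrapositive: assuming $\langle xy^{-1}\rangle$ is not $\ooo$-cofinal, show $s(\ooo)\in\{2,\infty\}$. Replacing $\ooo$ by its reverse, $\langle xy^{-1}\rangle$ is bounded above in $\ooo$ by some $h\in\pi_1(M_K)$; choosing $g\in\pi_1(M)$ with $\psi(g)=h$, every $t\in\pi_1(T_2)$ satisfies $\psi(t)=(xy^{-1})^k<_\ooo h=\psi(g)$ for some $k$, hence $t<_{\ooo_0}g$, so $\pi_1(T_2)$ is $\ooo_0$-bounded. Following the construction in the proof of Lemma \ref{lem:Cexists} with $j=2$: pick $\tau\in\pi_1(T_2)\setminus\langle\mu^{-1}\lambda\rangle$ (which is $\ooo_0|_{\pi_1(T_2)}$-cofinal but not $\ooo_0$-cofinal), put $x_0=\lim_n\rho_{\ooo_0}(\tau)^n(0)$ and $C=\mathrm{Stab}_{\rho_{\ooo_0}}(x_0)$; then $\pi_1(T_2)\subseteq C$, $C$ is a proper subgroup, and a reordering yields $\ooo_0'$ for which $C$ is convex, $\ooo_0'|_C=\ooo_0|_C$, and the $T_2$-slope is still $-1$. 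By Lemma \ref{lem:nopropercontainsperipherals}, $\pi_1(T_1)\not\subseteq C$, so Theorem \ref{thm:generalcofinal} applies with the convex subgroup $C$, $j=2$ and $I=\{1\}$: writing $[\gamma']$ for the $T_1$-slope of $\ooo_0'$, for every $\beta\in\mathcal{S}(T_2)$ there is a left-ordering of $\pi_1(M)$ order-detecting $(\emptyset,\emptyset;[\gamma'],[\beta])$.

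\textbf{Main obstacle.} The remaining task is to pin $[\gamma']$ down to $[-2]$ or $[\infty]$ and to identify it with $[-s(\ooo)]$. Taking $\beta$ rational with $[\beta]\in(2,\infty)$ and invoking Theorem \ref{thm:lnnnotweak} with $n=0$ eliminates $[\gamma']\in(2,\infty)$, and feeding in the component-swap automorphism $f$ of Remark \ref{rem:automorphism} (which fixes slope values) rules out a bit more; but a ``sliding'' argument of this kind cannot by itself excise the band of $T_1$-slopes in $(-2,2)$, and one still has to connect $[\gamma']$ back to $[-s(\ooo)]$, which need not hold a priori since $\pi_1(T_1)\not\subseteq C$. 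I therefore expect the heart of the proof to be a direct, hands-on computation — in the style of the sign-chasing in the proof of Theorem \ref{thm:lnnnotweak}, powered by a normal-closure identity analogous to Lemma \ref{lem:whnormalclosure} — establishing that a left-ordering of $\pi_1(M)$ with $T_2$-slope $-1$ for which $\pi_1(T_2)$ is bounded can only have $T_1$-slope $[-2]$ or $[\infty]$; equivalently, that the only proper convex subgroups of $\pi_1(M_K)$ containing $xy^{-1}$ occur when $s(\ooo)\in\{2,\infty\}$. The regime $s(\ooo)\in[-1,1]$, where part (1) gives no boundedness of the peripheral subgroup and where $\langle xy^{-1}\rangle$ is expected to be $\ooo$-cofinal outright, should be absorbed into this same computation. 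Granting it, $[\gamma']\in\{[-2],[\infty]\}$, hence $s(\ooo)\in\{2,\infty\}$, and the contrapositive is complete.
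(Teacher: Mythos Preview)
Your argument for part~(1) is essentially the paper's: lift $\ooo$ lexicographically along $\psi$, read off the $T_1$-slope, and apply Lemma~\ref{lem:cofinalinterval}(1) together with the amphichiral automorphism $\phi$.

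For part~(2) you took a detour that creates the very obstacle you identify. The paper's proof is \emph{direct}, parallel to~(1), and requires no hands-on sign-chasing. The point you under-used is that the kernel of $\psi$ is \emph{normal} and $\ooo_0$-convex, so the lexicographic ordering $\ooo_0$ order-detects $(\emptyset,\{1,2\};[\gamma],[\mu^{-1}\lambda])$, not merely $(\emptyset,\emptyset;\ldots)$. With $1\in K$ in hand, Corollary~\ref{cor:bddrangeLn}(1) (take $n=0$) applies immediately: whenever the $T_1$-slope $[\gamma]$ lies in $(2,\infty)$, the peripheral subgroup $\pi_1(T_2)$ is $\ooo_0$-cofinal. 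Since $\ooo_0$ is lexicographic over $\ooo$, cofinality of $\pi_1(T_2)$ in $\pi_1(M)$ pushes forward along the surjection $\psi$ to cofinality of $\psi(\pi_1(T_2))=\langle xy^{-1}\rangle$ in $\pi_1(M_K)$. That is the entire argument; the remaining range of slopes is handled by the sign symmetry.

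Your contrapositive route gets stuck for a structural reason: after invoking Lemma~\ref{lem:Cexists} and Theorem~\ref{thm:generalcofinal} you can slide the $T_2$-slope freely, but the only non-detection input available (Theorem~\ref{thm:lnnnotweak} with $n=0$, possibly combined with the swap $f$) constrains $[\gamma']$ only to the complement of $(2,\infty)$, which is far from $\{-2,\infty\}$. Incidentally, had you recorded $K=\{1,2\}$ for $\ooo_0$, Lemma~\ref{lem:Cexists} would preserve the $T_1$-slope, giving $[\gamma']=[\gamma]$; but even with that fix the contrapositive cannot close, whereas the direct route via Corollary~\ref{cor:bddrangeLn} finishes in one line.
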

\begin{proof}
    Consider the short exact sequence
    \[\{1\}\longrightarrow \langle \langle \mu^{-1} \lambda \rangle \rangle \longrightarrow \pi_1(M) \stackrel{\psi}{\longrightarrow} \pi_1(M_K) \longrightarrow\{1\}\]
    where $M$ is the complement of $\wh$ in $S^3$.

    To prove (1), suppose that $\ooo'$ is a left-ordering of $\pi_1(M_K)$ with $s(\ooo') \in (-\infty, -1) \cup (1, \infty)$. If $\ooo$ is a lexicographic ordering of $\pi_1(M)$ constructed relative to the short exact sequence above using $\ooo'$ as the left-ordering of $\pi_1(M_K)$, then we see that $\ooo$ order-detects $(\emptyset, \{2\}; [\alpha], [\mu^{-1} \lambda])$ where $[\alpha] \in (-\infty, -1) \cup (1, \infty)$.

    If $[\alpha] \in (1, \infty)$ then $\pi_1(T_1)$ is not $\ooo$-cofinal, by Lemma \ref{lem:cofinalinterval}, and so $\pi_1(\partial M_K)$ is not $\ooo'$-cofinal. On the other hand, if $[\alpha] \in (-\infty, -1)$ then applying the automorphism $\phi$ to $\ooo'$ yields a left-ordering $\ooo''$ of $\pi_1(M_K)$ order-detecting the slope $-[\alpha] \in (1, \infty)$. Applying Lemma \ref{lem:cofinalinterval} to $\ooo''$, we conclude that $\pi_1(\partial M_K)$ is not $\ooo''$-cofinal, and thus not $\ooo'$-cofinal, either. This proves (1).

    To prove (2), suppose $\ooo'$ is a left-ordering of $\pi_1(M_K)$ order-detecting $[\alpha] \in (2, \infty)$. Then if $\ooo$ is a lexicographic ordering of $\pi_1(M)$ constructed relative to the short exact sequence above using $\ooo'$ as the left-ordering of $\pi_1(M_K)$, we see that $\ooo$ order-detects $(\emptyset, \{1, 2\}; [\alpha], [\mu^{-1} \lambda])$ with $[\alpha] \in (2, \infty)$. Then $\pi_1(T_2)$ must be $\ooo$-cofinal by Corollary \ref{cor:bddrangeLn}(2) (where we take $n=0$), and so $\psi(\pi_1(T_2)) = \langle xy^{-1} \rangle$ must be $\ooo'$-cofinal as well.
\end{proof}

\begin{remark}
    By \cite{BGYpreprinta} the slopes $[\mu_K^{\pm 1}\lambda_K]$ and $[\mu_K]$ are order-detected by left-orderings of $\pi_1(M_K)$ relative to which $\pi_1(\partial M_K)$ is cofinal. This implies that the intervals $(-\infty, -1)$ and $(1, \infty)$, which are complementary to the set $[-1, 1] \cup \{ \infty \}$ appearing in Proposition \ref{prop:fig8}(1), cannot be ``enlarged'' by any improvement in our computations.
\end{remark}

\bibliographystyle{alpha}

\bibliography{whitehead}

\end{document}